\pdfoutput=1
\documentclass[reqno]{amsart}
\usepackage[utf8]{inputenc}
\usepackage{bbm, amssymb, mathtools, url, enumerate}
\usepackage{amsmath}
\usepackage{tikz}
\usepackage{tikz-cd}
\usepackage[scr]{rsfso}
\usepackage{bm}
\usepackage[margin=3cm]{geometry}
\usetikzlibrary{matrix,arrows,decorations.pathmorphing, decorations.markings, cd}
\usepackage{stmaryrd, rotate}
\usepackage[alphabetic]{amsrefs}
\usepackage{microtype}
\usepackage[unicode=true,pdfusetitle,
 bookmarks=true,bookmarksnumbered=false,bookmarksopen=false,
 breaklinks=false,pdfborder={0 0 0},pdfborderstyle={},backref=false,colorlinks=true]
 {hyperref}
\hypersetup{
    colorlinks, linkcolor=magenta,
    citecolor=cyan, urlcolor=magenta
}

\usepackage[capitalise]{cleveref}

\pgfdeclarelayer{bg}
\pgfsetlayers{bg,main}
\usetikzlibrary{calc}

\newtheorem{introtheorem}{Theorem}
\newtheorem{introcorollary}[introtheorem]{Corollary}

\newtheorem{theorem}{Theorem}[section]
\newtheorem{corollary}[theorem]{Corollary}
\newtheorem{lemma}[theorem]{Lemma}
\newtheorem{proposition}[theorem]{Proposition}

\theoremstyle{definition}

\newtheorem{convention}[theorem]{Convention}
\newtheorem{definition}[theorem]{Definition}

\newtheorem{example}[theorem]{Example}

\newtheorem{observation}[theorem]{Observation}
\newtheorem{remark}[theorem]{Remark}
\newtheorem*{introremark}{Remark}
\newtheorem*{introexample}{Example}

\newcommand{\biglrcorner}{\mathbin{\scalebox{1.6}{$\lrcorner$}}}

\newcommand{\notehelper}[3]{\textcolor{#3}{$\blacksquare$}\marginpar{\ifodd\thepage\raggedright\else\raggedleft\fi\color{#3}\tiny \textbf{#2:} #1}}

\newcommand{\lperp}[1]{{}^\perp#1}
\newcommand{\APh}{\lperp{\Ph}}
\newcommand{\Cell}{\mathsf{Cell}}

\usepackage{macros}

\title{Cocompactness and Presentability}
\author{Thorger Geiß}
\author{Phil Pützstück}
\author{Maxime Ramzi}
\hypersetup{
  pdfauthor={Thorger Geiß, Phil Pützstück, Maxime Ramzi}
}

\begin{document}
\begin{abstract}
    We give a short proof that $\kappa$-cocompact objects in a presentable category are subterminal.
    As our main result, we extend this to the setting of presentable $\infty$-categories.
    A consequence is that an $\infty$-category $\cC$ such that both $\cC$ and $\cC^\op$ are presentable is a small complete lattice,
    extending a classical theorem of Gabriel--Ulmer.
    Along the way, we prove a nilpotence result for phantom maps in general pointed presentable $\infty$-categories.
    Additionally, we show that a strengthening of our main result
    is equivalent to the existence of a proper class of measurable cardinals.
\end{abstract}

\begingroup\parskip=0pt
\maketitle
\tableofcontents
\endgroup

\section*{Introduction}

It is a classical theorem of category theory that if $\cC$ is a category
such that both $\cC$ and $\cC^\op$ are presentable,
then $\cC$ is a small complete lattice
(\cite{Gabriel-Ulmer}*{Satz 7.13}, \cite{Adamek-Rosicky}*{Theorem 1.64}).
In the setting of homotopy-coherent mathematics, the analogous statement for $\infty$-categories
appears to exist only as unverified folklore (see e.g.~\cite{Gestalten}*{Remark 1.25, discussion after Theorem 7.1} for claims in this vein).
We point out that it does not seem feasible to adapt the classical proof to this setting as it crucially relies on the prevalence of monomorphisms
(specifically, that morphisms admitting retracts are monomorphisms) in ordinary category theory,
which completely fails in the setting of $\infty$-categories.

Instead, we conduct a more in-depth study of ($\kappa$-)cocompact objects in presentable $\infty$-categories,
which culminates in the following significantly stronger result.

\begin{introtheorem}[Theorem \ref{thm:main}]\label{introthm:main}
    Let $\cC$ be a presentable $\infty$-category and $\kappa$ a regular cardinal.
    Then every $\kappa$-cocompact object in $\cC$ is $(-1)$-truncated.
\end{introtheorem}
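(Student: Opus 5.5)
The plan is to reduce $(-1)$-truncatedness of a $\kappa$-cocompact object $X$ to a statement about mapping spaces: for every $Y\in\cC$, the space $\Map(Y,X)$ is either empty or contractible. I would split this into a $\pi_0$-part and a higher-homotopy part. The $\pi_0$-part should be the analogue of the short $1$-categorical argument. The higher part should go through a pointed setting, where the nilpotence result for phantom maps from the abstract can be used.

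\emph{Step 1 (products as cofiltered limits).} For any set $I$ and object $Y$, write
\[
Y^I \simeq \lim_{J\subseteq I,\ |J|<\kappa} Y^J .
\]
This is a $\kappa$-cofiltered limit. Cocompactness of $X$ then gives
\[
\Map(Y^I,X)\simeq \colim_{J} \Map(Y^J,X),
\]
so every map $Y^I\to X$ factors, up to homotopy, through a projection to a $\kappa$-small subproduct. Separately, by presentability $X$ is $\lambda$-compact for some regular $\lambda\ge\kappa$. Hence $\Map(X,-)$, and likewise the mapping spaces out of any fixed object, are bounded in size in a way that does not depend on $I$.

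\emph{Step 2 ($\pi_0$).} Suppose $f,g\colon Y\to X$ are not homotopic. From an $I$-indexed family of choices between $f$ and $g$, build maps $Y^I\to X^I$. Composing with a suitable ``evaluation'' out of $X^I$ that is forced to factor through a small subproduct, one should obtain $2^{|I|}$ pairwise distinct homotopy classes of maps from a fixed object. Choosing $|I|$ large relative to $\lambda$ then contradicts the size bound. This is the $\infty$-categorical rerun of the Gabriel--Ulmer style counting argument; it only needs homotopy classes, so it can be carried out in $h\cC$ using limits computed in $\cC$.

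\emph{Step 3 (higher homotopy).} It remains to show $\Map(Y,X)$ has no higher homotopy when it is nonempty. Fix a point $y\colon Y\to X$ and pass to a pointed presentable $\infty$-category, for instance $\cC_{Y//X}$ or $(\cC_{/X})_*$. There, the higher homotopy groups of $\Map(Y,X)$ appear as $\pi_0$ of mapping spaces out of iterated suspensions, and cocompactness of $X$ transfers to this setting. Running the Step 1 description in this pointed category, nontrivial higher classes yield maps out of inverse limits that vanish on every $\kappa$-small stage, i.e.\ phantom maps. The phantom nilpotence result should force long composites of such maps to vanish. Meanwhile, cocompactness forces these composites to be detected at a finite stage, so the classes themselves vanish.

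I expect Step 3 to be the main obstacle. The monomorphism argument of the $1$-categorical proof is unavailable, so the connection between $\kappa$-cocompactness and phantom maps must be made precise. I would first attempt it in the stable case, using $\mathrm{Sp}(\cC_{/X})$, and then descend by Goodwillie-type or Postnikov-type arguments.
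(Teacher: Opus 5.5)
There is a genuine gap. It sits at the base of Steps~1--2. The only thing you extract from cocompactness is that maps out of a power $Y^I$ factor through a $\kappa$-small subproduct; this is the paper's $\kappa$-cosumpactness. You supplement it only with size bounds from presentability. No argument with only this input can prove the theorem in ZFC, even for ordinary categories. By Theorem~\ref{thm:meas-card-eqv} and the results leading up to it, if there is no measurable cardinal $\geq |K|^+$ (a consistent assumption), then for an infinite field $K$ the object $K$ is cosumpact in the presentable category $\CAlg(K)$. Yet $\Hom(K[t],K)\cong K$ has more than one element, so $K$ is not subterminal.

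You can see the failure directly in your counting. Any map $e\colon X^I\to X$ factors through some $X^J$ with $|J|<\kappa$. Hence the composites $e\circ\prod_i h_i$ for $h\in\{f,g\}^I$ depend only on $h|_J$, and they give at most $2^{|J|}$ classes. Getting $2^{|I|}$ classes would require a non-principal map out of a product, such as an ultraproduct quotient, and that is exactly what measurable cardinals supply. Your size bound is also misstated: $\map(X,Y^I)\simeq\map(X,Y)^I$ grows with $I$. Finally, the Gabriel--Ulmer counting you allude to uses copresentability of all of $\cC$ and the fact that split monomorphisms are monic, not cocompactness of a single object.

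The missing idea is the part of cocompactness that involves cofiltered limits of \emph{coproducts}. The paper uses the $\kappa$-tail $L_\kappa X=\lim_{\alpha<\kappa}\bigvee_{\kappa\setminus\alpha}X\to\bigvee_\kappa X\xrightarrow{\nabla}X$, which has two properties. First, it is $\kappa$-phantom: if $C$ is $\kappa$-compact, a composite $C\to L_\kappa X\to\bigvee_\kappa X$ factors both through $\bigvee_\beta X$ and through $\bigvee_{\kappa\setminus\beta}X$, and is therefore null. Second, each stage $\bigvee_{\kappa\setminus\alpha}X\to X$ has a section. So for $\kappa$-cocompact $Z$, the map $\pi_0\map(X,Z)\to\pi_0\map(L_\kappa X,Z)$ is a $\kappa$-filtered colimit of injections, hence injective. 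Iterating and combining over all $n$ gives $LX\to X$ that is $n$-fold phantom for every $n$ and still $\pi_0$-injective against $Z$. Then $(\Ph^\infty_\kappa)^2=0$ (Proposition~\ref{prop:phantom-nilpotent}) makes $LLZ\to LZ\to Z$ null, which forces $\id_Z\simeq 0$.

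Your Step~3 has neither ingredient. Maps out of limits of subproducts are not phantom in the relevant sense, namely killed by precomposition with maps from $\kappa$-compact objects; a projection $Y^I\to Y^J$ is not. Nothing in your sketch supplies the injectivity that lets phantomness bear on an identity map.

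Your candidate pointed categories also do not work. In $\cC_{Y//X}$, which is not even pointed unless $y$ is an equivalence, and in $(\cC_{/X})_*$, the object $X$ becomes terminal, so its cocompactness there carries no information. The paper instead works in $\cC_{Y//Y}$. There $(Y\xrightarrow{(y,\id)}X\times Y\to Y)$ is $\kappa$-cocompact by Lemma~\ref{lem:pointed-reduction}, and maps into it from $Y\amalg Y$ recover all of $\map_\cC(Y,X)$. Its vanishing gives contractibility at once, so no separate $\pi_0$ step, stabilization, or Postnikov descent is needed.
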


\begin{introremark}
    To the best of our knowledge,
    this is a new result even for ordinary categories $\cC$.
\end{introremark}

\begin{introexample}
    Let $\CHaus$ denote the category of compact Hausdorff spaces.
    Recall that $\CHaus^\op$ is $\aleph_1$-presentable with generator $[0,1]$
    (this is essentially a consequence of Urysohn's Lemma
    and well-known to experts, see e.g.~\cite[Proposition F.5]{Efimov}).
    In particular, by Theorem \ref{introthm:main}
    and the fact that any non-empty space admits at least two maps
    to $2 = * \amalg * \in \CHaus$,
    it follows that the empty space $\emptyset$ is the only object in $\CHaus$
    that is $\kappa$-compact for any regular cardinal $\kappa$.
\end{introexample}

\begin{introcorollary}[Corollary \ref{cor:bipresentable}]\label{introcor:bipresentable}
    Let $\cC$ be an $\infty$-category such that both $\cC$ and $\cC^\op$
    are presentable. Then $\cC$ is a small complete lattice.
\end{introcorollary}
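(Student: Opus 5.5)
We want to prove Corollary \ref{introcor:bipresentable}: if $\cC$ and $\cC^\op$ are both presentable, then $\cC$ is a small complete lattice. We deduce this from Theorem \ref{introthm:main}, with the only extra input coming from presentability of $\cC^\op$.

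\textbf{Step 1: every object is cocompact.} Since $\cC^\op$ is presentable, it is $\kappa$-accessible for some regular cardinal $\kappa$. So $\cC^\op$ is generated under $\kappa$-filtered colimits by a small set of $\kappa$-compact objects. Dually, every object of $\cC$ is a $\kappa$-cofiltered limit of $\kappa$-cocompact objects of $\cC$.

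\textbf{Step 2: every object is $(-1)$-truncated.} By Theorem \ref{introthm:main} applied to the presentable $\infty$-category $\cC$, every $\kappa$-cocompact object is $(-1)$-truncated. Write $X\simeq\lim_i X_i$ as a $\kappa$-cofiltered limit of $\kappa$-cocompact objects $X_i$. Then for every $Y\in\cC$,
\[
\mathrm{Map}(Y,X)\simeq\lim_i \mathrm{Map}(Y,X_i).
\]
Each $\mathrm{Map}(Y,X_i)$ is empty or contractible, and limits of $(-1)$-truncated spaces are $(-1)$-truncated. Hence $\mathrm{Map}(Y,X)$ is empty or contractible, so every object $X$ is $(-1)$-truncated.

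\textbf{Step 3: $\cC$ is a complete lattice.} Since all mapping spaces are empty or contractible, $\cC$ is equivalent to (the nerve of) a preorder. Passing to equivalence classes of objects gives a poset.
\begin{itemize}
\item \emph{Smallness.} A presentable $\infty$-category is generated under colimits by a small set of objects. In a poset every colimit is a join, so every element is a join of a subset of this small set. Hence there is at most a set's worth of equivalence classes.
\item \emph{Completeness.} Presentable $\infty$-categories have all small limits and colimits. In a preorder these are exactly meets and joins of arbitrary small families, so the poset is a small complete lattice.
\end{itemize}

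\textbf{Main obstacle.} All the substance lies in Theorem \ref{introthm:main}, which we are allowed to assume. For the corollary itself, the only point needing care is Step 1: identifying $\kappa$-compact objects of $\cC^\op$ with $\kappa$-cocompact objects of $\cC$, and checking that every object is a $\kappa$-cofiltered limit of them. This is simply the definition of accessibility, dualized.
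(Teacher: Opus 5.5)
Your proposal is correct and follows essentially the paper's route: copresentability makes every object a limit of $\kappa$-cocompact objects, Theorem \ref{introthm:main} makes these $(-1)$-truncated, and closure of $(-1)$-truncated objects under limits turns $\cC$ into a poset. A presentable poset is then a small complete lattice, by the same smallness and completeness argument you give.
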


The deduction of Corollary \ref{introcor:bipresentable}
from Theorem \ref{introthm:main}
is elementary and explained in Section \ref{sec:posets}.
In the case of ordinary categories,
the proof of Theorem \ref{introthm:main} is surprisingly simple,
and we give a standalone treatment in Section \ref{sec:1-cat-case}.
The strategy for the general $\infty$-categorical case is similar
but requires more work.
We first reduce to the case of pointed $\infty$-categories in
Section \ref{sec:red-to-pointed}.
The key ingredient in this case is a result on phantom maps
we prove as Proposition \ref{prop:phantom-nilpotent}.
Specifically, we show that in any pointed $\kappa$-presentable $\infty$-category,
the ideal of maps which can be written as $n$-fold composites of $\kappa$-phantom
maps for all $n$ squares to zero.
In Section \ref{sec:tails}, we construct a sufficient supply of such phantom maps, which lets us leverage the aforementioned result to prove Theorem \ref{introthm:main}.

Finally, in Section \ref{sec:measurable},
we highlight an interesting relation of the above theorem
to large cardinal principles of set theory.
Specifically, we show that a stronger version of Theorem \ref{introthm:main},
in which $\kappa$-cocompactness is replaced with the notion of $\kappa$-cosumpactness (see Definition \ref{def:cosumpact}),
is equivalent to the existence of a proper class of measurable cardinals.

\begin{introtheorem}[Theorem \ref{thm:meas-card-eqv}]\label{introthm:measurable}
    The following statements are equivalent:
    \begin{enumerate}
        \item There exists a proper class of measurable cardinals.

        \item (Strengthened Theorem \ref{introthm:main})
            For any presentable $\infty$-category $\cC$ and regular cardinal $\kappa$,
            every $\kappa$-cosumpact object in $\cC$ is $(-1)$-truncated.

        \item For any presentable $\infty$-category $\cC$,
            every cosumpact object in $\cC$ is $(-1)$-truncated.

        \item For any infinite field $K$, the object $K \in \CAlg(K)$ is not cosumpact.
            (Here $\CAlg(K) = \CRing_{K/}$ is the ordinary category of $K$-algebras.)
    \end{enumerate}
\end{introtheorem}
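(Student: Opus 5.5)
The plan is to prove the cycle $(1)\Rightarrow(2)\Rightarrow(3)\Rightarrow(4)\Rightarrow(1)$. The two middle implications are formal. For $(2)\Rightarrow(3)$: a cosumpact object is, by Definition \ref{def:cosumpact}, $\kappa$-cosumpact for some regular $\kappa$, so (2) applies. For $(3)\Rightarrow(4)$: the category $\CAlg(K)$ is presentable and $K$ is initial in it. If $K$ were $(-1)$-truncated, then every mapping space $\mathrm{Map}(A,K)$ would have at most one point. But $\mathrm{Map}(K[x],K)\cong K$ is infinite, so $K$ is not $(-1)$-truncated. Hence (3) forces $K$ to be non-cosumpact.

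For $(4)\Rightarrow(1)$ I argue contrapositively, and expect the set theory to be classical. Suppose there is no measurable cardinal above some $\lambda$, and take an infinite field $K$ with $|K|\geq\lambda$. I claim $K$ is cosumpact in $\CAlg(K)$.
\begin{itemize}
\item A $K$-algebra map $f\colon\prod_{i\in I}K\to K$ is determined by an ultrafilter $\mathcal U$ on $I$: apply $f$ to the idempotents $e_S$, for $S\subseteq I$, and let $\mathcal U=\{S : f(e_S)=1\}$. Then $f$ factors through the ultrapower $K^I/\mathcal U$.
\item If $\mathcal U$ is not $|K|^+$-complete, it admits a partition of $I$ into at most $|K|$ pieces, none of which lies in $\mathcal U$. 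Indexing the pieces by elements of $K$ gives an element $a$ of $K^I/\mathcal U$ with $a-c$ invertible for every $c\in K$. This is impossible if $K^I/\mathcal U\to K$ is a $K$-algebra map. So $\mathcal U$ is $|K|^+$-complete.
\item The completeness of a non-principal ultrafilter is a measurable cardinal, which here would exceed $\lambda$. So $\mathcal U$ is principal and $f$ factors through a projection.
\end{itemize}
The same argument applies to the relevant small subproducts and limits in the definition of cosumpactness. I will check that this factorization through a projection is exactly the cosumpactness condition.

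The converse direction $(1)\Rightarrow(2)$ is the main obstacle. Fix a presentable $\cC$, a regular $\kappa$, and a $\kappa$-cosumpact object $X$. The strategy is to show that $X$ is $\mu$-cocompact for some regular $\mu$, and then invoke Theorem \ref{thm:main}.
\begin{enumerate}
\item Using the proper class of measurables, choose a measurable $\mu>\kappa$ large enough that $\cC$ is $\mu$-presentable.
\item Given a $\mu$-cofiltered diagram $(Y_j)_{j\in J}$, choose a $\mu$-complete ultrafilter $\mathcal U$ on $J$ containing all upward cones. This is where measurability of $\mu$ enters, and $J$ may first be replaced by a cofinal subposet of suitable size.
\item Compare $\lim_J Y_j$ with the ultraproduct $\colim_{U\in\mathcal U}\prod_{j\in U}Y_j$.
\item Cosumpactness handles maps out of products: such maps factor through $\kappa$-small subproducts. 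The aim is to show, using $\mu$-completeness, that this factorization descends to factorization through a single stage $Y_j$ of the limit.
\end{enumerate}
I would first try this for ordinary categories, mimicking Section \ref{sec:1-cat-case}. Then I would pass to $\infty$-categories via the reduction of Section \ref{sec:red-to-pointed}, using the phantom-map Proposition \ref{prop:phantom-nilpotent} if coherence issues arise.
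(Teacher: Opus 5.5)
Your arguments for (2)$\Rightarrow$(3)$\Rightarrow$(4) are fine. Your (4)$\Rightarrow$(1) is essentially the paper's argument (Lemma \ref{lemma:fieldprod} together with Lemma \ref{lem:smallest-meas}). The step you defer is easy: your idempotent argument works verbatim for any $f\colon\prod_I R_i\to K$. The set $\{S : f(\mathbbm{1}_S)=1\}$ is the ultrafilter of the composite $\prod_I K\to\prod_I R_i\to K$. If that ultrafilter is principal at $i_0$, then $f$ factors through $\mathrm{pr}_{i_0}$.

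The genuine gap is (1)$\Rightarrow$(2). Your plan is to show that a $\kappa$-cosumpact $X$ is $\mu$-cocompact and then cite Theorem \ref{thm:main}. That intermediate claim is false, already in ZFC and regardless of measurable cardinals. Take $\emptyset\in\Set$:
\begin{itemize}
\item It is cosumpact. A product of sets is empty iff some factor $Y_i$ is empty, and that index $i$ lies in some $I_j$. Since both sides of the comparison map are subsingletons, it is a bijection.
\item It is not $\mu$-cocompact for any $\mu$. The diagram $\alpha\mapsto\mu\setminus\alpha$ on $\mu^{\mathrm{op}}$ is a $\mu$-cofiltered diagram of non-empty sets with empty limit.
\end{itemize}
So cosumpact objects can be $(-1)$-truncated without being cocompact for any $\kappa$. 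No ultrafilter argument can make step 4 of your plan work.

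Step 2 is also problematic on its own terms. Extending the filter of cones on an arbitrary $\mu$-cofiltered $J$ to a $\mu$-complete ultrafilter is a strong-compactness-type hypothesis, not mere measurability. For example, it amounts to a fine measure on $[\lambda]^{<\mu}$. Such posets also have no cofinal subposets of size $\mu$ when $\lambda>\mu$, so you cannot shrink $J$ first.

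The missing idea is to take the ultrapower of $X$ itself rather than of test diagrams.
\begin{itemize}
\item Choose $\lambda$ with $\cC$ $\lambda$-presentable, and a measurable $\mu\ge\kappa,\lambda$ large enough that $\map(Y,X)$ is $\mu$-small for all $\lambda$-compact $Y$. Let $\cU$ be a $\mu$-complete non-principal ultrafilter on $\mu$.
\item Test against $\map(Y,-)$. Ultrapowers of $\mu$-small anima along $\mu$-complete ultrafilters are trivial (Corollary \ref{cor:diag-equivalence}). Hence the diagonal $X\to(\prod_\mu X)/\cU$ is an equivalence.
\item The quotient map therefore yields $f\colon\prod_\mu X\to X$ with section $\Delta$.
\item By $\kappa$-cosumpactness, $f$ factors through $\prod_I X$ for some $\kappa$-small $I$. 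Since $\mu\setminus I\in\cU$, $f$ also factors through $\prod_{\mu\setminus I}X$.
\item A map out of a product that has a section and factors through both projections forces its target to be $(-1)$-truncated (Lemma \ref{lem:factor-through-both-proj}).
\end{itemize}
This is Lemma \ref{lem:measurable}. It needs neither Theorem \ref{thm:main} nor phantom maps.
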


\begin{introremark}
    Nonetheless, we can prove the conclusion of Theorem \ref{introthm:measurable}(2)
    for infinitary distributive $\infty$-categories without assuming the existence of measurable cardinals, see Proposition \ref{prop:distr-case}.
\end{introremark}

\begin{introremark}
    The entirety of Section \ref{sec:measurable}, including Theorem \ref{introthm:measurable} as well as Proposition \ref{prop:distr-case}
    mentioned in the previous remark,
    parses verbatim in the setting of ordinary categories.
\end{introremark}

\subsection*{Relation to previous work}

\begin{itemize}
    \item For ordinary categories, the conclusion of Corollary \ref{introcor:bipresentable} is classical,
        see \cite{Gabriel-Ulmer}*{Satz 7.13} or \cite{Adamek-Rosicky}*{Theorem 1.64}.

    \item For compactly generated stable $\infty$-categories
        the conclusion of Corollary \ref{introcor:bipresentable}
        can be deduced from work of Neeman (\cite[Appendix E.1]{Neeman}),
        see Remark \ref{rem:stable-cosumpact} for a more detailed discussion.

    \item The nilpotence result for phantom maps
        in pointed presentable $\infty$-categories
        (Proposition \ref{prop:phantom-nilpotent})
        adapts \cite[Corollary 6.4.8]{Muro-Raventos},
        which proves the analogue for triangulated categories.

    \item There is a theorem of Isbell recalled in \cite[Theorem A.5]{Adamek-Rosicky}, stating that $\Set^{\op}$ is a reflective localization of a presentable category if and only if there is no proper class of measurable cardinals. In this case, $\Set^\op$ is very close to presentable—this suggests that bipresentability and cocompactness questions are intimately tied to the existence of large measurable cardinals.

    \item Theorem \ref{introthm:measurable} on the relationship between measurable cardinals and cosumpact objects
        is reflected by examples in abelian groups due to Łoś--Eda
        (cf.~\cite[Theorem III.3.2]{Eklof-Mekler}),
        and in general topology by means of \cite{Keesling-Rudyak},
        see Remarks \ref{rem:slender} and \ref{rem:chaus} for a more detailed discussion.
\end{itemize}

\subsection*{Conventions \& Terminology}

\begin{itemize}
    \item We work with presentable $\infty$-categories as developed in \cite{HTT}.
        What we call a $\kappa$-compact object corresponds to a $\kappa$-presentable object
        in the terminology of \cite{Adamek-Rosicky},
        while what we call a $\kappa$-presentable $\infty$-category corresponds
        to a \emph{locally} $\kappa$-presentable category in the terminology of \cite{Adamek-Rosicky},
        and is called a $\kappa$-compactly generated $\infty$-category in \cite{HTT}.

    \item $\An$ denotes the $\infty$-category of spaces / anima / homotopy types / $\infty$-groupoids.

    \item We call an $\infty$-category $\cC$ copresentable
        if $\cC^\op$ is presentable,
        and bipresentable if it is both presentable and copresentable.

    \item For an $\infty$-category $\cC$ and a regular cardinal $\kappa$,
        we say that an object $X \in \cC$ is $\kappa$-cocompact
        if $X$ is $\kappa$-compact when viewed as an object in $\cC^\op$.
        We denote by $\cC^\kappa$ resp.~$\cC^{\co\kappa}$
        the full subcategories of $\cC$ on the $\kappa$-compact
        resp.~$\kappa$-cocompact objects.

    \item For a cardinal $\kappa$, we write $\kappa^+$ for the smallest
        cardinal larger than $\kappa$, which is always regular.
\end{itemize}

This project is an attempt to grapple with the interactions
between set theory and homotopy theory, as highlighted in the third named author's Barcelona address\footnote{See \url{https://youtu.be/ug3imHlHa4I?t=424}.}.

\subsection*{Acknowledgements}
    We thank Guido Arnone for moral encouragement and Jiří Adámek for
    an email correspondence.
    All authors were funded by the Deutsche Forschungsgemeinschaft (DFG, German Research Foundation) – Project-ID 427320536 – SFB 1442, as well as under Germany’s Excellence Strategy EXC2044/2–390685587, Mathematics Münster: Dynamics–Geometry–Structure.

\section{Presentability and posets}\label{sec:posets}

In this section, we recall the relationship
between (bi)presentability and posets,
which explains the deduction of Corollary \ref{introcor:bipresentable}
from Theorem \ref{introthm:main}.

\begin{proposition}
A poset is a presentable ($\infty$-)category if and only if it is a small complete lattice,
in which case it is a bipresentable ($\infty$-)category.
\end{proposition}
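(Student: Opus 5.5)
The plan has three parts: translate presentability of a poset into lattice-theoretic terms, prove both directions, and then use the self-duality of the lattice notions to get bipresentability.

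\emph{Translating the condition.} A poset $P$, viewed as an $\infty$-category, has all mapping spaces empty or contractible. Consequently colimits in $P$ are just joins (suprema), and limits are meets (infima). Moreover, any $\infty$-categorical colimit of a diagram $K \to P$ depends only on the set of objects in its image, so that colimit is the supremum of that set. Presentability means two things: $P$ is locally small and has all small colimits, and $P$ is accessible.

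\emph{The forward direction.} Suppose $P$ is presentable. It has all small colimits, so every small subset has a supremum. Accessibility means $P = \mathrm{Ind}_\kappa(P^\kappa)$ for a small subcategory $P^\kappa$ of $\kappa$-compact objects, and every object is a colimit (supremum) of objects of $P^\kappa$. Hence every element of $P$ is the supremum of a subset of the small set $P^\kappa$. This gives a surjection from the power set of $P^\kappa$ onto $P$, so $P$ is small. A small poset with all small suprema also has all infima: the infimum of $S$ is the supremum of the set of lower bounds of $S$. So $P$ is a small complete lattice.

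\emph{The converse.} Suppose $P$ is a small complete lattice. It has all small colimits, namely suprema. To see it is presentable, I would take $\kappa$ larger than $|P|$. Then every $\kappa$-filtered diagram in $P$ has a terminal object, i.e.\ a maximum of its image: since the image has fewer than $\kappa$ elements, it has an upper bound inside the diagram. So every object is $\kappa$-compact and $P \simeq \mathrm{Ind}_\kappa(P)$ with $P$ small, and $P$ is $\kappa$-presentable. Alternatively, one can cite the characterization of presentable $\infty$-categories as those of the form $\mathrm{Ind}_\kappa(\cC)$ with $\cC$ small and having $\kappa$-small colimits, or as accessible localizations of presheaf categories. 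Either way, the key point is that every object of a small category is $\kappa$-compact once $\kappa$ exceeds the cardinality of its morphisms.

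\emph{Bipresentability.} The opposite of a small complete lattice is again a small complete lattice, so by the converse $P^\op$ is presentable as well.

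The only mildly delicate step is the $\infty$-categorical one: verifying that $\kappa$-compactness and $\kappa$-filtered colimits in a poset can be computed $1$-categorically. This holds because every poset is $0$-truncated, i.e.\ its nerve is the nerve of an ordinary category, and the inclusion of posets into $\infty$-categories preserves the relevant (co)limits.
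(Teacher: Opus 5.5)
Your proof is correct and follows essentially the same route as the paper. Colimits and limits in a poset are joins and meets; smallness follows because every element is a join of a subset of the small set of $\kappa$-compact objects; and for $\kappa > |P|$ every $\kappa$-filtered (resp.\ $\kappa$-cofiltered) diagram attains its supremum (resp.\ infimum), so every object is $\kappa$-compact (resp.\ $\kappa$-cocompact). The differences are cosmetic: the paper gets meets directly from completeness of presentable categories rather than as suprema of lower bounds, and gets cocompactness from the dual ``eventually constant'' observation instead of applying the converse to $P^{\mathrm{op}}$.
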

\begin{proof}
    In a poset, the colimit resp.~limit of a diagram is equivalently the join resp.~meet of its objects.
    Thus, a presentable poset is a complete lattice. Furthermore,
    it is small since there is a set (the $\kappa$-compact objects for some regular cardinal
    $\kappa$) of objects such that every object is a small ($\kappa$-filtered) colimit of a
    diagram valued in this set of objects, but this means that every object is a join over
    subsets of a small set, which still only gives a set of objects. Moreover,
    every small complete lattice $L$ admits all (co)limits and every object is both
    $\kappa$-compact and $\kappa$-cocompact for $\kappa > |L|$ since then every $\kappa$-(co)filtered
    diagram is eventually constant, hence it is bipresentable.
\end{proof}

\begin{proposition}\label{prop:conjecture}
    Let $\cC$ be a presentable ($\infty$-)category that satisfies the conclusion of Theorem \ref{introthm:main}.
If $\cC$ is copresentable, then it is a poset.
\end{proposition}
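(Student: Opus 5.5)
Since $\cC^\op$ is presentable, there is a regular cardinal $\kappa$ such that $\cC^\op$ is $\kappa$-presentable. In particular, every object of $\cC^\op$ is a small $\kappa$-filtered colimit of $\kappa$-compact objects of $\cC^\op$. Dually, every $X \in \cC$ is a $\kappa$-cofiltered limit in $\cC$ of $\kappa$-cocompact objects $X_i$. By the hypothesis that $\cC$ satisfies the conclusion of Theorem \ref{introthm:main}, each such $X_i$ is $(-1)$-truncated. So my plan is to show that every object of $\cC$ is $(-1)$-truncated, and then to conclude that $\cC$ is a poset.

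The first step is to observe that $(-1)$-truncated objects are closed under all limits in $\cC$. An object $Y$ is $(-1)$-truncated iff $\Map(Z,Y)$ is empty or contractible for all $Z$. Since $\Map(Z,-)$ sends limits in $\cC$ to limits in $\An$, it suffices that $(-1)$-truncated anima are closed under limits in $\An$, which is standard: they form the image of a right adjoint, or equivalently are stable under limits as the $(-1)$-truncated objects. Hence $X = \lim_i X_i$ is $(-1)$-truncated, and every object of $\cC$ is $(-1)$-truncated.

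The second step is to conclude that $\cC$ is a poset. For all $Z, Y$ the mapping anima $\Map(Z,Y)$ is empty or contractible, so $\cC$ is equivalent to a preorder. Skeletality (antisymmetry up to equivalence) is automatic in the $\infty$-categorical sense: if $Z \to Y$ and $Y \to Z$ both exist, their composites are the unique endomorphisms, namely the identities, so $Z \simeq Y$. Thus $\cC$ is equivalent to a poset, and by the preceding Proposition it is a small complete lattice.

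There is no real obstacle here; the only point needing care is the first step, namely making precise that $\kappa$-cocompact objects generate $\cC$ under $\kappa$-cofiltered limits. This is just the definition of $\kappa$-presentability of $\cC^\op$, read in $\cC$. One could also avoid choosing the same $\kappa$ carefully, since any $\kappa$ witnessing copresentability works and the hypothesis holds for all regular $\kappa$.
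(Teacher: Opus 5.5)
Your proposal is correct and follows essentially the same route as the paper. Copresentability lets you write every object as a limit of $\kappa$-cocompact objects, these are $(-1)$-truncated by hypothesis, and $(-1)$-truncated objects are closed under limits, so all mapping anima are empty or contractible and $\cC$ is a poset.
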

\begin{proof}
    For $\cC$ to be copresentable means that there is a $\kappa\gg0$ such that
    $\cC$ is generated under small limits by a set of $\kappa$-cocompact objects. These are $(-1)$-truncated by the assumption and $(-1)$-truncated objects are stable
    under small limits, so that $\cC$ consists of $(-1)$-truncated objects,
    i.e.~is a poset.
\end{proof}

Thus, if $\cC$ is bipresentable and satisfies the conclusion of Theorem \ref{introthm:main}, it is a small complete lattice by the two propositions above.

\section{The case of ordinary categories}\label{sec:1-cat-case}

In this section, we prove Theorem \ref{introthm:main} for ordinary categories.
The following ad hoc construction is central to the proof;
see Section \ref{sec:tails} for an abstraction and generalization of its properties.

\begin{definition}\label{def:1-tail}
    Let $\cC$ be a bicomplete category, $\kappa$ a regular cardinal, and $X \in \cC$.
    The \textit{$\kappa$-tail} of $X$ is the object
    $L_{\kappa}(X) \coloneqq \lim_{\alpha \in \kappa^{\op}} \coprod_{\kappa \setminus \alpha} X$,
    where the limit is taken along the canonical inclusions,
    equipped with the composite map
    \begin{equation*}
        \begin{tikzcd}
            L_{\kappa}(X) \arrow[r,"\pi_{\emptyset}"] &  \coprod_{\kappa} X \arrow[r,"\nabla"] & X.
        \end{tikzcd}
    \end{equation*}
\end{definition}

\begin{theorem}\label{thm:1cat}
    Let $\cC$ be a bicomplete category such that the pullback functor
    $\mathsf{pb} \colon \Fun(\biglrcorner, \cC) \rightarrow \cC$
    is accessible (e.g.~if $\cC$ is presentable \cite[Proposition 1.59]{Adamek-Rosicky}).
    Then, for every regular cardinal $\kappa$,
    every $\kappa$-cocompact object in $\cC$ is subterminal.
\end{theorem}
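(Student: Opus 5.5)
The plan is to show directly that for any $X \in \cC$ and any two maps $f, g \colon X \to Y$ into a $\kappa$-cocompact object $Y$, we have $f = g$. The tool is the $\kappa$-tail $L_\kappa(X)$ of Definition \ref{def:1-tail}. Since $\kappa^{\op}$ is $\kappa$-cofiltered, $\kappa$-cocompactness of $Y$ gives a bijection
\begin{equation*}
    \Hom(L_\kappa(X), Y) \cong \operatorname{colim}_{\alpha \in \kappa} \Hom\Bigl(\coprod_{\kappa\setminus\alpha} X, Y\Bigr) \cong \operatorname{colim}_{\alpha\in\kappa} \prod_{\kappa\setminus\alpha}\Hom(X,Y).
\end{equation*}
Concretely, maps $L_\kappa(X) \to Y$ are families $(a_i)_{i<\kappa}$ of maps $X\to Y$, with two families identified exactly when they agree on a tail $\kappa\setminus\alpha$. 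Under this identification, $f\circ\nabla\circ\pi_\emptyset$ is the constant family at $f$.

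The first step is to find a self-map of $L_\kappa(X)$ that acts on families by reindexing. The successor map $\sigma\colon\kappa\to\kappa$, $i\mapsto i+1$, sends $\kappa\setminus\alpha$ into $\kappa\setminus(\alpha+1)$. It therefore induces maps $\coprod_{\kappa\setminus\alpha}X\to\coprod_{\kappa\setminus(\alpha+1)}X\subseteq\coprod_{\kappa\setminus\alpha}X$ compatible with the inclusions, and hence an endomorphism $s$ of $L_\kappa(X)$. Precomposition with $s$ acts on the colimit above by $(a_i)\mapsto(a_{i+1})$. Now take the alternating family $a$ with $a_i=f$ for $i$ even and $a_i=g$ for $i$ odd (writing each ordinal as limit $+$ finite). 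Then $a\circ\sigma$ disagrees with $a$ at every index where $f\neq g$. So if we can show that $s$ induces the identity on $\Hom(L_\kappa(X),Y)$, the two families agree on a tail, and hence $f=g$.

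The second step, which I expect to be the main obstacle, is to prove that $s^*$ acts trivially, or some substitute statement: for instance, that every map $L_\kappa(X)\to Y$ is determined by its behaviour on a "diagonal" part invariant under reindexing. Naturality alone does not give this, because $s$ is genuinely not the identity. This is where accessibility of $\mathsf{pb}$ enters. Choose $\lambda$ such that pullbacks commute with $\lambda$-filtered colimits. Then enlarge $\kappa$ if necessary; this is harmless, because $\kappa$-cocompact objects are $\kappa'$-cocompact for $\kappa'\ge\kappa$, and we may take $\kappa'$ of large cofinality. Write each $\coprod_{\kappa\setminus\alpha}X$ as a $\lambda$-filtered colimit of small coproducts. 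The aim is to compute the pullbacks $\coprod_{S}X\times_{\coprod_\kappa X}\coprod_{T}X$ for subsets $S, T\subseteq\kappa$ in terms of $\coprod_{S\cap T}X$ up to a small error, and to deduce that the equalizer/pullback of $\pi_\emptyset$ and $\pi_\emptyset\circ s$ over $\coprod_\kappa X$ still maps to $L_\kappa(X)$ in a way detected by $Y$.

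I would first try to show this in the form: any two maps $L_\kappa(X)\to Y$ that agree after pulling back along the inclusions of "even" and "odd" sub-tails already agree. This would let one compare $a$ with $a\circ\sigma$ directly. If this route stalls, the fallback is to replace the shift by the involution swapping even and odd indices, which interchanges $a$ with the alternating family $(g,f,g,f,\dots)$, and to run the same pullback-accessibility argument for it.
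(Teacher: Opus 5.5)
Your first step is correct. The shift $\sigma$ does induce an endomorphism $s$ of $L_\kappa(X)$. Under the cocompactness bijection, $s^*$ acts on $\colim_{\alpha}\prod_{\kappa\setminus\alpha}\Hom(X,Y)$ by reindexing, and if $s^*$ fixed the class of the alternating family you would get $f=g$. But the whole content of the theorem lies in the second step, which you leave open, and the routes you sketch do not close it.

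Consider the pullbacks $E$ and $O$ of $L_\kappa(X)$ along $\coprod_{\mathrm{even}}X\to\coprod_\kappa X$ and $\coprod_{\mathrm{odd}}X\to\coprod_\kappa X$. Restricting to them can only detect maps out of $L_\kappa(X)$ if $E\amalg O\to L_\kappa(X)$ is (at least relative to $Y$) epimorphic. That is what extensivity would give. A general bicomplete category is not extensive; no nontrivial pointed category is. Accessibility of $\mathsf{pb}$ only lets you commute pullbacks with $\kappa$-filtered colimits, not with binary coproducts. The involution fallback and the unspecified ``small error'' computation of $\coprod_S X\times_{\coprod_\kappa X}\coprod_T X$ suffer from the same problem.

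The missing idea is to exhaust $\coprod_\kappa X$ by its initial segments, $\coprod_\kappa X\cong\colim_{\alpha<\kappa}\coprod_\alpha X$, which \emph{is} a $\kappa$-filtered colimit. After enlarging $\kappa$, this gives $L_\kappa(X)\cong\colim_{\alpha<\kappa}P_\alpha$ with $P_\alpha\coloneqq L_\kappa(X)\times_{\coprod_\kappa X}\coprod_\alpha X$. On $P_\alpha$ the map $\pi_\emptyset$ factors both through the head $\coprod_\alpha X$ and, via $\pi_\alpha$, through the tail $\coprod_{\kappa\setminus\alpha}X$.

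So take any two maps $a,b\colon\coprod_\kappa X\to Y$, and let $h_\alpha$ be the hybrid equal to $a$ on the summands indexed by $\alpha$ and to $b$ on those indexed by $\kappa\setminus\alpha$. Then $a\pi_\emptyset|_{P_\alpha}=h_\alpha\pi_\emptyset|_{P_\alpha}=b\pi_\emptyset|_{P_\alpha}$ for every $\alpha$, hence $a\pi_\emptyset=b\pi_\emptyset$. With $a=\nabla f$ and $b=\nabla g$, cocompactness of $Y$ forces $\nabla f$ and $\nabla g$ to agree on some tail, so $f=g$ directly.

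This is the paper's proof, and it makes the shift unnecessary. It would also supply your missing step: writing $\sigma_\emptyset$ for the shift of $\coprod_\kappa X$, one has $\pi_\emptyset s=\sigma_\emptyset\pi_\emptyset$. Hence $s^*(a\pi_\emptyset)=(a\sigma_\emptyset)\pi_\emptyset=a\pi_\emptyset$, and every element of $\Hom(L_\kappa(X),Y)$ has this form once $\Hom(X,Y)\neq\emptyset$.
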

\begin{proof}
    Let $Y \in \cC$ be $\kappa$-cocompact for some regular cardinal $\kappa$.
    Since $Y$ is then $\lambda$-cocompact for all regular cardinals $\lambda \ge \kappa$,
    we can assume without loss of generality that pullbacks commute
    with $\kappa$-filtered colimits in $\cC$.
    Let $X \in \cC$ and consider two morphisms $f,g \colon X \rightarrow Y$.
    We need to show that $f = g$.

    Let $L_\kappa(X)$ be the $\kappa$-tail of $X$ in $\cC$
    as in Definition \ref{def:1-tail}.
    For any ordinal $\alpha < \kappa$, we let $P_{\alpha} \coloneqq L_\kappa(X) \times_{\coprod_{\kappa} X} \coprod_{\alpha} X$,
    so that we obtain a $\kappa$-filtered diagram $\kappa \rightarrow \cC,\,\alpha \mapsto P_{\alpha}$ with colimit $L_\kappa(X)$,
    by the assumption that $\kappa$-filtered colimits commute with pullbacks.

    Let $\overline{f},\overline{g} \colon L_\kappa(X) \rightarrow Y$ be defined as
    the composite of the projection $\pi_{\emptyset} \colon L_\kappa(X) \rightarrow \coprod_{\kappa} X$ and
    the maps $\nabla f, \nabla g \colon \coprod_{\kappa} X \rightarrow Y$
    given by $f$ respectively $g$ on every summand. First, we claim that
    $\overline{f} = \overline{g}$. Since $L_\kappa(X) \cong \colim_{\alpha \in \kappa} P_{\alpha}$,
    it suffices to check that $\overline{f}\vert_{P_{\alpha}} = \overline{g}\vert_{P_{\alpha}}$
    for all $\alpha < \kappa$. To this end, let $h_{\alpha} \colon \coprod_{\kappa} X \rightarrow Y$
    be the auxiliary morphism given by $f$ on the first $\alpha$ summands
    and $g$ on the remaining $\kappa \setminus \alpha$ summands.
    Then the two left respectively right composites
    \begin{equation*}
        \begin{tikzcd}
            \coprod_{\alpha} X \arrow[r] & \coprod_{\kappa} X \arrow[r,shift left=0.2em,"\nabla f"]\arrow[r,shift right=0.2em,"h_{\alpha}"'] & Y,
            & \coprod_{\kappa \setminus \alpha} X \arrow[r] & \coprod_{\kappa} X \arrow[r,shift left=0.2em,"h_{\alpha}"]\arrow[r,shift right=0.2em,"\nabla g"'] & Y
        \end{tikzcd}
    \end{equation*}
    are equal by definition, and the diagram
    \begin{equation*}
        \begin{tikzcd}
            P_{\alpha} \arrow[r]\arrow[d] & L_\kappa(X) \arrow[r]\arrow[d] & \coprod_{\kappa \setminus \alpha} X \arrow[dl] \\
            \coprod_{\alpha} X \arrow[r] & \coprod_{\kappa} X &
        \end{tikzcd}
    \end{equation*}
    commutes by construction, witnessing $\overline{f}\vert_{P_{\alpha}} = h_{\alpha}\pi_{\emptyset}\vert_{P_{\alpha}} = \overline{g}\vert_{P_{\alpha}}$.
    So $\overline{f} = \overline{g}$,
    and since the canonical map
    \begin{equation*}
        \begin{tikzcd}
            \colim_{\alpha \in \kappa}\Hom_{\cC}(\coprod_{\kappa \setminus \alpha} X, Y) \arrow[r,"\sim"] & \Hom_{\cC}(L_\kappa(X), Y)
        \end{tikzcd}
    \end{equation*}
    is a bijection by $\kappa$-cocompactness of $Y$,
    there exists some $\alpha < \kappa$ so that
    $\nabla f = \nabla g \colon \coprod_{\kappa \setminus \alpha} X \to Y$.
    But then clearly $f=g$, as desired.
\end{proof}

\begin{corollary}
    Let $\cC$ be a category such that both $\cC$ and $\cC^{\op}$ are presentable.
    Then $\cC$ is a small complete lattice.\qed
\end{corollary}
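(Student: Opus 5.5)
The corollary follows directly from the results already established, so the plan is to chain them rather than argue anything new.

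\textbf{Step 1 (cocompact objects are subterminal).} Since $\cC$ is presentable, it is bicomplete. By \cite[Proposition 1.59]{Adamek-Rosicky} the pullback functor $\mathsf{pb}\colon \Fun(\biglrcorner,\cC)\to\cC$ is accessible. Theorem \ref{thm:1cat} therefore applies: for every regular cardinal $\kappa$, every $\kappa$-cocompact object of $\cC$ is subterminal, that is, $(-1)$-truncated. So $\cC$ satisfies the conclusion of Theorem \ref{introthm:main}.

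\textbf{Step 2 ($\cC$ is a poset).} Since $\cC^\op$ is presentable, Proposition \ref{prop:conjecture} applies. Concretely, pick $\kappa$ such that $\cC^\op$ is $\kappa$-presentable. Then every object of $\cC$ is a limit of $\kappa$-cocompact objects: dually, a $\kappa$-filtered colimit of $\kappa$-compact objects of $\cC^\op$. By Step 1 these are subterminal. Subterminal objects are closed under limits, because $\Hom(Z,\lim Y_i)=\lim\Hom(Z,Y_i)$ is a limit of sets with at most one element. Hence every hom-set of $\cC$ has at most one element.

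\textbf{Step 3 (identifying the poset).} A category with at most one morphism between any two objects is a preorder. I will read ``poset'' up to equivalence, i.e.\ pass to the skeleton, and the conclusion should be understood in that sense. By the first proposition of Section \ref{sec:posets}, a presentable poset is a small complete lattice, which proves the claim.

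\textbf{Expected obstacle.} The only step needing care is checking the hypotheses of Theorem \ref{thm:1cat}, namely accessibility of pullbacks, which is supplied by the cited result. Everything else is formal.
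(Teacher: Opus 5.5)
Your proposal is correct and is essentially the paper's own argument: the corollary is obtained by combining Theorem \ref{thm:1cat} (with accessibility of pullbacks supplied by presentability) with Proposition \ref{prop:conjecture} and the first proposition of Section \ref{sec:posets}. Your reading of ``poset'' up to equivalence, i.e.\ passing to the skeleton of the resulting preorder, matches how the paper uses the term.
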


The proof of Theorem \ref{thm:1cat} serves as motivation
for the proof of Theorem \ref{introthm:main},
which will occupy the next three sections.
The above strategy does not directly generalize to $\infty$-categories,
due to coherence issues.
Specifically, instead of checking that two morphisms are equal,
we need to construct a witnessing homotopy;
while the above proof does construct such homotopies
$\overline{f}|_{P_\alpha} \simeq \overline{g}|_{P_\alpha}$ for each $\alpha < \kappa$,
these cannot be made compatible in a coherent manner.

In fact, the extension of Theorem \ref{thm:1cat} to $\infty$-categories
is false. Indeed, opposites of stable presentable $\infty$-categories
provide a wealth of examples of categories having $\aleph_0$-accessible
pullbacks and a large supply of $\kappa$-cocompact objects.
For instance, $\Sp^\op$ is such a category (cf.~\cite[Section 1.4]{HA}).

\section{Reduction to the pointed case}\label{sec:red-to-pointed}

In this section, we begin the proof of Theorem \ref{introthm:main}
by reducing it to the case where $\cC$ is pointed.

\begin{lemma}\label{lem:pointed-reduction}
    Let $\cC$ be a presentable $\infty$-category, $\kappa$ a regular cardinal,
    and $X \in \cC$.
    We have adjunctions
    \[\begin{tikzcd}[ampersand replacement=\&]
        {\cC_{X//X}} \&\& {\cC_{X/}} \&\& \cC.
        \arrow[""{name=0, anchor=center, inner sep=0}, "U", shift left=2, from=1-1, to=1-3]
        \arrow[""{name=1, anchor=center, inner sep=0}, "{(\pr \colon - \times X \to X)}", shift left=2, from=1-3, to=1-1]
        \arrow[""{name=2, anchor=center, inner sep=0}, "V"', shift right=2, from=1-3, to=1-5]
        \arrow[""{name=3, anchor=center, inner sep=0}, "{(\inc \colon X \to X \amalg -)}"', shift right=2, from=1-5, to=1-3]
        \arrow["\dashv"{anchor=center, rotate=-90}, draw=none, from=0, to=1]
        \arrow["\dashv"{anchor=center, rotate=-90}, draw=none, from=3, to=2]
    \end{tikzcd}\]
    The functor $V$ reflects $\kappa$-cocompact objects,
    and the right adjoint of $U$ preserves them.
\end{lemma}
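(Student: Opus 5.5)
The plan is to first write down the two adjunctions concretely, then check both claims directly on mapping spaces. In each case the key input is that, in the slice and coslice categories involved, limits of weakly contractible diagrams (in particular $\kappa$-cofiltered ones) are created by the forgetful functors.

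\textbf{The adjunctions.} For $A \in \cC$ and $(X \to Y) \in \cC_{X/}$ there is an equivalence $\Map_{\cC_{X/}}(X \to X \amalg A, Y) \simeq \Map_{\cC}(A, Y)$, since a map under $X$ out of $X \amalg A$ is determined by its restriction to $A$. This gives $\inc \dashv V$. Similarly, for $(X \to B \to X) \in \cC_{X//X}$ and $(X \to Y) \in \cC_{X/}$, a map $B \to Y \times X$ over $X$ and under $X$ is the same as a map $B \to Y$ under $X$. This gives $U \dashv R$, where $R(X \to Y) = (X \to Y \times X \xrightarrow{\pr} X)$ with first map $(y, \mathrm{id})$. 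Both equivalences are standard consequences of the universal properties of (co)slices (e.g.\ \cite{HTT}*{Section 5.2.5}), and I would not belabour them.

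\textbf{$V$ reflects $\kappa$-cocompactness.} Let $(X \to Y) \in \cC_{X/}$ with $Y$ $\kappa$-cocompact in $\cC$. Let $(X \to A_i)_{i \in I}$ be a $\kappa$-cofiltered diagram in $\cC_{X/}$. Since $I$ is weakly contractible, its limit is $X \to \lim_i A_i$, computed in $\cC$. Mapping spaces in the coslice are fibres:
\[
\Map_{\cC_{X/}}(A_i, Y) \simeq \mathrm{fib}\bigl(\Map_\cC(A_i, Y) \to \Map_\cC(X, Y)\bigr),
\]
taken over the structure map. The target $\Map_\cC(X, Y)$ and the basepoint do not depend on $i$. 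Filtered colimits in $\An$ commute with fibres (finite limits), and $\colim_i \Map_\cC(A_i, Y) \simeq \Map_\cC(\lim_i A_i, Y)$ by the cocompactness of $Y$. Hence the comparison map $\colim_i \Map_{\cC_{X/}}(A_i, Y) \to \Map_{\cC_{X/}}(\lim_i A_i, Y)$ is an equivalence.

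\textbf{$R$ preserves $\kappa$-cocompactness.} Here the obvious attempt is to apply the same fibre argument to the slice over $X$. That would run into trouble, because the fibre would be taken over $\Map(A_i, X)$, which varies with $i$ and would require $X$ itself to be cocompact. I would avoid this and use the adjunction directly. The key point is that $U$ preserves $\kappa$-cofiltered limits: $\cC_{X//X} \simeq (\cC_{X/})_{/(X = X)}$, and limits of weakly contractible diagrams in an overcategory are computed in the underlying category. Then, for $Y$ $\kappa$-cocompact in $\cC_{X/}$ and a $\kappa$-cofiltered diagram $(B_i)$ in $\cC_{X//X}$, we get
\[
\Map(\lim_i B_i, RY) \simeq \Map(U\lim_i B_i, Y) \simeq \Map(\lim_i UB_i, Y) \simeq \colim_i \Map(UB_i, Y) \simeq \colim_i \Map(B_i, RY).
\]
It remains to check that this composite is the canonical comparison map, which follows from naturality of the adjunction equivalence. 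The only mildly delicate step is confirming that $U$ preserves these limits, that is, identifying limits of weakly contractible diagrams in the double slice; I expect this to be routine.
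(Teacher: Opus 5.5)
Your proof is correct and takes essentially the same approach as the paper. For $V$ you use the fibre description of mapping spaces in $\cC_{X/}$ and commute $\kappa$-filtered colimits with pullbacks in anima, and for the right adjoint of $U$ you use that $U$ preserves weakly contractible (hence $\kappa$-cofiltered) limits and then unpack by hand the dual of HTT, Proposition 5.5.7.2, which the paper simply cites.
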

\begin{proof}
    Since $U$ preserves weakly contractible
    and hence $\kappa$-cofiltered limits,
    its right adjoint preserves $\kappa$-cocompact objects
    by the dual of \cite[Proposition 5.5.7.2]{HTT}.

    To see that $V$ reflects $\kappa$-cocompact objects,
    let $Y \in \cC$ be $\kappa$-cocompact,
    and $f \colon X \to Y$ an object in $\cC_{X/}$.
    Since $\map_{\cC_{X/}}(-,f) \simeq \map_\cC(V-,Y) \times_{\map_\cC(X,Y)} \{f\}$,
    the facts that $V$ creates limits
    and $\kappa$-filtered colimits commute with pullbacks in $\An$
    imply that $f$ is $\kappa$-cocompact.
\end{proof}

\begin{proposition}\label{prop:reduce-to-pointed}
    The following statements are equivalent:
    \begin{enumerate}
        \item Theorem \ref{introthm:main} holds.
        \item Let $\cC$ be a pointed presentable $\infty$-category and $\kappa$
            a regular cardinal. Then every $\kappa$-cocompact object in $\cC$ is zero.
    \end{enumerate}
\end{proposition}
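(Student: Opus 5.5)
(1)$\Rightarrow$(2) is immediate: in a pointed $\infty$-category the zero object is terminal, so a $(-1)$-truncated object $Y$ has $\map(0,Y)$ empty or contractible; it is nonempty (there is a map $0\to Y$), hence contractible. So for every $X$ the space $\map(X,Y)$ is contractible, i.e.\ $Y$ is terminal, hence zero.

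For (2)$\Rightarrow$(1), let $\cC$ be presentable, $\kappa$ regular, and $Y\in\cC$ $\kappa$-cocompact. Being $(-1)$-truncated means that for every $X$ the space $\map_\cC(X,Y)$ is empty or contractible. So fix $X$ and a point $f\colon X\to Y$; we must show $\map_\cC(X,Y)$ is contractible. We use Lemma \ref{lem:pointed-reduction} with the object $X$. The category $\cC_{X//X}$ is pointed and presentable, since slices and coslices of presentable $\infty$-categories are presentable. By the lemma, $f\colon X\to Y$ is $\kappa$-cocompact in $\cC_{X/}$ because $V$ reflects $\kappa$-cocompactness. The right adjoint of $U$ preserves $\kappa$-cocompact objects, so the object $R(f)=(X\to Y\times X\to X)$, with first map $(f,\id)$, is $\kappa$-cocompact in $\cC_{X//X}$. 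By (2), $R(f)$ is a zero object, i.e.\ equivalent to $X\xrightarrow{\id}X\xrightarrow{\id}X$. In particular $\pr\colon Y\times X\to X$ is an equivalence.

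It remains to deduce that $\map(X,Y)$ is contractible. Since $\pr$ is an equivalence, $\map(X,Y\times X)\to\map(X,X)$ is an equivalence. This map is the projection $\map(X,Y)\times\map(X,X)\to\map(X,X)$, and its fibre over $\id_X$ is $\map(X,Y)$. A product projection which is an equivalence and has nonempty base has contractible fibres, so $\map(X,Y)\simeq *$. Indeed, the fibre over any point is the fibre of an equivalence, and $\map(X,X)$ is nonempty. The argument used only that $\map(X,Y)$ is nonempty via $f$. Hence $Y$ is $(-1)$-truncated.

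The main subtle point is verifying that $\cC_{X//X}$ is genuinely pointed presentable and that Lemma \ref{lem:pointed-reduction} applies with $f$ giving an object of $\cC_{X/}$ whose underlying object is $Y$. Also, the right adjoint of $U$ is the functor $(X\to Z)\mapsto(X\to Z\times X\to X)$, where the composite to $X$ is the identity. Once this is in place, the rest is formal.
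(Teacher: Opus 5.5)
Your proof is correct and follows the paper's route: Lemma~\ref{lem:pointed-reduction} makes $A_f=(X\xto{(f,\id)}Y\times X\xto{\pr}X)$ a $\kappa$-cocompact object of the pointed presentable $\infty$-category $\cC_{X//X}$, and (2) then forces it to be zero. The only difference is the last step. The paper uses both adjunctions to identify $\map_\cC(X,Y)\simeq\map_{\cC_{X//X}}(X\to X\amalg X\xto{\nabla}X,\,A_f)\simeq *$. You instead observe that $\pr\colon Y\times X\to X$ must be an equivalence and read off $\map_\cC(X,Y)$ as the fibre over $\id_X$ of $\map_\cC(X,Y)\times\map_\cC(X,X)\to\map_\cC(X,X)$, which is equally valid.
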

\begin{proof}
    The implication (1) $\Rightarrow$ (2) follows
    from the fact that $(-1)$-truncated objects in pointed $\infty$-categories
    are zero.
    Now suppose that (2) holds. Let $\cC$ be a presentable $\infty$-category,
    $\kappa$ a regular cardinal, $Y \in \cC$ a $\kappa$-cocompact object,
    and $f \colon X \to Y$ any map.
    By Lemma \ref{lem:pointed-reduction},
    the object $A_f \coloneqq (X \xto{(f,\id)} Y \times X \xto{\pr} X)$
    is $\kappa$-cocompact in the pointed presentable $\infty$-category
    $\cC_{X//X}$.
    But then, by assumption, $A_f$ is zero in $\cC_{X//X}$,
    and we deduce
    \[
        *
        \simeq \map_{\cC_{X//X}}(X \xto{\inc} X \amalg X \xto{\nabla} X, A_f)
        \simeq \map_{\cC_{X/}}(X \xto{\inc} X \amalg X, f)
        \simeq \map_{\cC}(X,Y)
    \]
    using both adjunctions of Lemma \ref{lem:pointed-reduction}.
    So $\map_\cC(X,Y)$ is contractible if it is non-empty, as desired.
\end{proof}

\section{Phantom maps in pointed \texorpdfstring{$\infty$}{∞}-categories}\label{sec:phantom}

In this section, we prove a nilpotence result for phantom maps\footnote{Phantom maps originally arose in algebraic topology \cite{Adams-Walker, Gray}
	and have since been studied in the setting of triangulated categories \cite{Neeman}.
	Closely related is the theory of projective classes introduced in \cite{Christensen}. }
in pointed presentable $\infty$-categories (Proposition \ref{prop:phantom-nilpotent}).
Concretely, we show that the ideal of maps which can be written
as $n$-fold composites of $\kappa$-phantom maps for every $n$
squares to zero when $\cC$ is $\kappa$-presentable.
This generalizes a result of Muro--Raventós \cite[Corollary 6.4.8]{Muro-Raventos},
who prove the result for triangulated categories,
and our proof is an adaptation of theirs. The relevance to Theorem \ref{introthm:main} is that
we construct a sufficient supply of $\infty$-fold $\kappa$-phantom maps
to test against a $\kappa$-cocompact object in Section \ref{sec:tails} below.

Throughout this section, $\kappa$ is a regular cardinal
and $\cC$ a pointed cocomplete $\infty$-category.

\begin{definition}
    A map $f \colon X \rightarrow Y$ in $\cC$ is called
    \textit{$\kappa$-phantom} if for every map $c \colon C \rightarrow X$
    with $C$ a $\kappa$-compact object,
    the composite $fc$ is the zero map.
    A map that can be written as a composite of $n$ $\kappa$-phantom maps
    is called \textit{$n$-fold $\kappa$-phantom}
    and the collection of these will be denoted $\Ph_{\kappa}^n$.
    We refer to maps in the intersection $\Ph_{\kappa}^{\infty} \coloneqq \bigcap_{n\ge1} \Ph_{\kappa}^n$
    as \textit{$\infty$-fold $\kappa$-phantom} maps.
\end{definition}

\begin{observation}
    The collections $\Ph_{\kappa}^n$ for $1 \leq n \leq \infty$ are \textit{ideals}
    in the sense that for $(f \colon X \to Y) \in \Ph^n_\kappa$
    and arbitrary maps $e \colon W \rightarrow X, g \colon Y \rightarrow Z$ in $\cC$, we also have $gfe \in \Ph_{\kappa}^n$.
\end{observation}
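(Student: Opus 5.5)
The plan is to check the ideal property first for $n=1$, then for finite $n$, and finally for $n=\infty$. Everything reduces to the fact that composition is associative up to homotopy, and that the zero map composed with anything is again zero.

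\emph{The case $n=1$.} Let $f\colon X\to Y$ be $\kappa$-phantom, and let $e\colon W\to X$ and $g\colon Y\to Z$ be arbitrary. Take a map $c\colon C\to W$ with $C$ a $\kappa$-compact object. Then $ec\colon C\to X$ is a map from a $\kappa$-compact object, so $f(ec)\simeq 0$ by hypothesis. Hence $(gfe)c\simeq g\circ(fec)\simeq g\circ 0\simeq 0$. The last step uses that in a pointed $\infty$-category the zero map $C\to Y$ factors through the zero object, so postcomposing it with any map again gives a map factoring through $0$, that is, the zero map. Thus $gfe\in\Ph^1_\kappa$.

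\emph{Finite $n$.} Write $f=f_n\circ\cdots\circ f_1$ with each $f_i$ $\kappa$-phantom. Absorb $e$ into the first factor and $g$ into the last:
\[
gfe \simeq (gf_n)\circ f_{n-1}\circ\cdots\circ f_2\circ(f_1e).
\]
When $n=1$ this is just $g f_1 e$. By the case $n=1$, both $gf_n$ and $f_1e$ are $\kappa$-phantom, so $gfe$ is a composite of $n$ $\kappa$-phantom maps and lies in $\Ph^n_\kappa$.

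\emph{The case $n=\infty$.} Suppose $f\in\Ph^\infty_\kappa=\bigcap_{n\ge1}\Ph^n_\kappa$. Then $f\in\Ph^n_\kappa$ for every finite $n$, so $gfe\in\Ph^n_\kappa$ for every $n$ by the previous step. Therefore $gfe\in\Ph^\infty_\kappa$.

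None of these steps is a real obstacle. The only point needing care is that everything happens at the level of the homotopy category, so equalities of maps mean homotopies.
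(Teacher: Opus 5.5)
Your proof is correct and is essentially the same routine verification the paper has in mind; the paper states this as an Observation without writing out a proof. Checking $n=1$ directly, absorbing $e$ and $g$ into the outer factors for finite $n$, and then intersecting over $n$ is exactly the intended argument.
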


\begin{observation}
    If $\lambda \geq \kappa$ is another regular cardinal,
    then any $\lambda$-phantom map is also $\kappa$-phantom.
\end{observation}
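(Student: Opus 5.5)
The statement follows directly from the definitions, because $\kappa$-compact objects are $\lambda$-compact for $\lambda \geq \kappa$. So the plan is to check that inclusion and then read off the claim.

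\emph{Step 1: compactness is monotone in the cardinal.} Let $C$ be $\kappa$-compact and $\lambda \geq \kappa$ regular. Every $\lambda$-filtered diagram is in particular $\kappa$-filtered. Hence $\map_\cC(C,-)$, which commutes with $\kappa$-filtered colimits, also commutes with $\lambda$-filtered colimits. This is the standard inclusion $\cC^\kappa \subseteq \cC^\lambda$ (cf.~\cite{HTT}), and I would cite it rather than reprove it.

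\emph{Step 2: conclude.} Let $f \colon X \to Y$ be $\lambda$-phantom, and let $c \colon C \to X$ be any map with $C$ $\kappa$-compact. By Step 1, $C$ is $\lambda$-compact. The $\lambda$-phantom hypothesis then gives that $fc$ is the zero map. Since $c$ was arbitrary, $f$ is $\kappa$-phantom.

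I expect no real obstacle here. The only point to be careful about is the direction of the inclusion: a larger cardinal gives a larger class of compact objects to test against, and therefore a smaller class of phantom maps. The same argument shows $\Ph^n_\lambda \subseteq \Ph^n_\kappa$ for all $1 \leq n \leq \infty$, by applying the inclusion factorwise to an $n$-fold composite.
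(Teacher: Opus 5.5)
Your proof is correct and is the argument the paper intends; the paper states this observation without proof. Since $\cC^\kappa \subseteq \cC^\lambda$ for $\lambda \geq \kappa$, a $\lambda$-phantom map is tested against a larger class of objects and is therefore in particular $\kappa$-phantom. Your extension to $\Ph^n_\lambda \subseteq \Ph^n_\kappa$ by applying this factorwise is also correct.
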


\begin{observation}
    Let $L \colon \cC \rightleftarrows \cD \colon R$
    be an adjunction of pointed $\infty$-categories.
    Then, if the left adjoint $L$ preserves $\kappa$-compact objects, the right adjoint $R$ preserves $\kappa$-phantom maps.
\end{observation}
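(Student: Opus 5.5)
The plan is to unwind the definition of $\kappa$-phantom and transport the test map across the adjunction. Let $f \colon X \to Y$ be a $\kappa$-phantom map in $\cD$; we must show that $Rf \colon RX \to RY$ is $\kappa$-phantom in $\cC$. So fix a $\kappa$-compact object $C \in \cC$ and a map $c \colon C \to RX$. We need $Rf \circ c \simeq 0$.

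The key step is the naturality of the adjunction equivalence $\map_\cC(C, R-) \simeq \map_\cD(LC, -)$. Let $c^\flat \colon LC \to X$ be the adjoint of $c$. By naturality in the second variable, the adjoint of $Rf \circ c$ is $f \circ c^\flat$. By hypothesis $LC$ is $\kappa$-compact in $\cD$, and $f$ is $\kappa$-phantom. Hence $f \circ c^\flat$ is the zero map.

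It remains to check that the adjunction equivalence sends the zero map $LC \to Y$ to the zero map $C \to RY$. The zero map $LC \to Y$ factors through the zero object $0$. Since $R$ is a right adjoint it preserves the terminal object, so $R0 \simeq 0$. The adjoint of $LC \to 0 \to Y$ is therefore $C \to R0 \simeq 0 \to RY$, using naturality again. This composite is the zero map. We conclude $Rf \circ c \simeq 0$, so $Rf$ is $\kappa$-phantom.

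I do not expect a genuine obstacle. The only point needing care is that "zero" makes sense and is preserved, which holds because both categories are pointed and $R$ preserves terminal objects. The same argument applied factorwise shows that $R$ also preserves $\Ph^n_\kappa$ for all $1 \le n \le \infty$.
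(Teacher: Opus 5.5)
Your proof is correct: transporting the test map $c \colon C \to RX$ across the adjunction to $c^\flat \colon LC \to X$, using that $LC$ is $\kappa$-compact, and checking that zero maps correspond to zero maps because $R0 \simeq 0$ is exactly the intended argument. The paper states this as an observation without proof, and your argument (including the remark that it extends to $\Ph^n_\kappa$ for $1 \le n \le \infty$ by functoriality of $R$) is the standard verification behind it, as used later for $\lim_\cI$ with $\const$ as left adjoint.
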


\begin{definition}
    Given classes of maps $\cP, \cQ$ in $\cC$, we let $\cP\circ \cQ \coloneqq \{p\circ q \mid p \in \cP,\ q \in \cQ\}$ and
    \[
        \lperp\cP \coloneqq
        \{X \in \cC \mid \text{for $(\phi \colon Y \to Z) \in \cP$
        and any map $f \colon X \to Y$, we have $\phi f \simeq 0$}\}.
    \]
\end{definition}

\begin{lemma}\label{lem:extension}
    Let $\cP, \cQ$ be two classes of maps in $\cC$. If $A \to B \to C$ is a cofiber sequence
    with $A \in \lperp\cP$ and $C \in \lperp\cQ$, then $B \in \lperp{(\cQ \circ \cP)}$.
\end{lemma}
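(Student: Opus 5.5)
The plan is to unwind the definition of $\lperp{(\cQ \circ \cP)}$ and use the cofiber sequence twice: once to factor a map out of $B$ through $C$, and once to see that the relevant restriction to $A$ vanishes. Fix a composite $\psi\phi$ with $(\phi \colon Y \to Z) \in \cP$ and $(\psi \colon Z \to W) \in \cQ$, together with an arbitrary map $f \colon B \to Y$. Write $i \colon A \to B$ and $p \colon B \to C$ for the maps of the cofiber sequence. We must show $\psi\phi f \simeq 0$.

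The first step uses $A \in \lperp\cP$. The composite $\phi f i \colon A \to Z$ is of the form $\phi \circ (fi)$, so it is nullhomotopic. Choose a nullhomotopy. Since $C$ is the cofiber of $i$, i.e.\ the pushout $B \amalg_A 0$, the map $\phi f$ together with this nullhomotopy determines a map $g \colon C \to Z$ with $g p \simeq \phi f$. This is just the universal property of the pushout in the pointed $\infty$-category $\cC$. Equivalently, the fiber sequence of mapping anima
\[
\map(C,Z) \to \map(B,Z) \to \map(A,Z)
\]
says that $\phi f$ lifts along $p^*$ once its image in $\map(A,Z)$ lies in the component of zero.

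The second step uses $C \in \lperp\cQ$. Here $\psi g \simeq 0$, since $\psi \in \cQ$ and $g$ is a map out of $C$. Hence
\[
\psi\phi f \simeq \psi g p \simeq 0 \circ p \simeq 0,
\]
which shows $B \in \lperp{(\cQ \circ \cP)}$.

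The only step requiring any care is the extension step, and it presents no genuine obstacle. In the $\infty$-categorical setting one must actually choose a nullhomotopy of $\phi f i$, but any choice works. The statement only asks that $\psi\phi f$ be nullhomotopic, not that some nullhomotopy be canonical, so no coherence issue arises.
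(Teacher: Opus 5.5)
Your proof is correct and is essentially the paper's own argument. The restriction of $\phi f$ to $A$ is null because $A \in \lperp\cP$, so $\phi f$ factors through some $g \colon C \to Z$; then $\psi g \simeq 0$ because $C \in \lperp\cQ$, and hence $\psi\phi f \simeq 0$.
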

\begin{proof}
    Let $A \to B \to C$ be as in the statement,
    and consider maps $B \xto{f} X \xto{\phi} Y \xto{\psi} Z$
    where $\phi \in \cP$ and $\psi \in \cQ$.
    We have the following commutative diagram
    \[\begin{tikzcd}
        A & B & X & Y \\
        0 & C && Z,
        \arrow[from=1-1, to=1-2]
        \arrow["0"{description}, shift left, curve={height=-18pt}, from=1-1, to=1-4]
        \arrow[from=1-1, to=2-1]
        \arrow["f", from=1-2, to=1-3]
        \arrow[from=1-2, to=2-2]
        \arrow["\phi", from=1-3, to=1-4]
        \arrow["\psi", from=1-4, to=2-4]
        \arrow[from=2-1, to=2-2]
        \arrow["\lrcorner"{anchor=center, pos=0.125, rotate=180}, draw=none, from=2-2, to=1-1]
        \arrow["g"{description}, dashed, from=2-2, to=1-4]
        \arrow["0"', from=2-2, to=2-4]
    \end{tikzcd}\]
    where we note that the restriction of $\phi f$ to $A$ vanishes since $A \in \lperp\cP$, so $\phi f$ factors through some map $g \colon C \to Y$.
    But then also $\psi g$ vanishes since $C \in \lperp\cQ$.
    Hence $\psi \phi f \simeq 0$, as desired.
\end{proof}

\begin{definition}
    Let $\Cell^0_\kappa$ be the full subcategory of $\cC$
    on arbitrary coproducts of $\kappa$-compact objects
    and inductively let
    \[
        \Cell^{n+1}_\kappa \coloneqq \{B \amalg_A C \mid A,B,C \in \Cell^n_\kappa\}
    \]
    be the full subcategory on objects which can be written
    as pushouts of objects in $\Cell^n_\kappa$.
\end{definition}

\begin{lemma}\label{lem:cellular}~
    \begin{enumerate}
        \item For $1 \leq n \leq \infty$,
            the subcategory $\APh^n_\kappa \subseteq \cC$
            is closed under coproducts.

        \item $\Cell^n_\kappa$ contains $\cC^\kappa$
            and is closed under coproducts and suspensions
            for $n \geq 0$.

        \item $\Cell^n_\kappa \subseteq \APh^{2^n}_\kappa$
            for $n \geq 0$.

        \item Let $\langle \Cell^0_\kappa \rangle \subseteq \cC$
            be the full subcategory generated by $\Cell^0_\kappa$ under finite
            colimits.
            Then
            \[
                \langle \Cell^0_\kappa\rangle
                = \bigcup_{n \geq 0}\Cell^n_\kappa
                \subseteq \APh^\infty_\kappa.
            \]
    \end{enumerate}
\end{lemma}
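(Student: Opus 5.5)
I would prove the four parts in order, with (3) obtained from (2) and Lemma \ref{lem:extension} by induction, and (4) following by assembling the pieces.

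For (1), let $X = \coprod_i X_i$ with each $X_i \in \APh^n_\kappa$. Take $\phi \in \Ph^n_\kappa$ and a map $f\colon X \to Y$. The composite $\phi f$ is determined by its restrictions $\phi f|_{X_i}$, and each of these is null by assumption. Since $\map(\coprod_i X_i, Z) \simeq \prod_i \map(X_i, Z)$, a map whose components are all null is null: a point of a product lies in the component of the basepoint exactly when each coordinate does. For $n=\infty$ there is a subtlety, because $\Ph^\infty_\kappa = \bigcap_n \Ph^n_\kappa$ is not literally a composite class. Still, $\APh^\infty_\kappa$ is defined by testing against that intersection, so the same argument applies verbatim.

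For (2), I would argue by induction on $n$.
\begin{itemize}
    \item A $\kappa$-compact object is a one-fold coproduct, so $\cC^\kappa \subseteq \Cell^0_\kappa$. Since $0 \in \cC^\kappa$, also $0 \in \Cell^0_\kappa$.
    \item $\Cell^0_\kappa$ is closed under coproducts because a coproduct of coproducts is a coproduct.
    \item It is closed under suspension because $\Sigma \coprod C_i \simeq \coprod \Sigma C_i$, and $\Sigma C = 0 \amalg_C 0$ is $\kappa$-compact as a finite colimit of $\kappa$-compact objects.
    \item For the inductive step, coproducts of pushouts are pushouts of coproducts: $\coprod (B_i \amalg_{A_i} C_i) \simeq (\coprod B_i)\amalg_{\coprod A_i}(\coprod C_i)$. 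Likewise $\Sigma$ commutes with pushouts. So closure passes from $\Cell^n_\kappa$ to $\Cell^{n+1}_\kappa$.
    \item We get $\Cell^n_\kappa \subseteq \Cell^{n+1}_\kappa$ via $B \simeq B\amalg_0 0$.
\end{itemize}

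For (3), again induct on $n$.
\begin{itemize}
    \item \emph{Base case.} $\cC^\kappa \subseteq \APh^1_\kappa$ by the definition of phantom maps, and then $\Cell^0_\kappa \subseteq \APh^1_\kappa$ by (1).
    \item \emph{Inductive step.} Write $D = B\amalg_A C$ with $A,B,C \in \Cell^n_\kappa$. This gives a cofiber sequence $B \to D \to \mathrm{cofib}(A\to C)$. The cofiber is $C \amalg_A 0$, which is not obviously in $\Cell^n_\kappa$.
    \item The cleaner route is to write $D$ as an extension of $B \amalg C$ and $\Sigma A$. There is a cofiber sequence $A \to B\amalg C \to D$, hence also $B \amalg C \to D \to \Sigma A$. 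Here $B\amalg C \in \Cell^n_\kappa$ and $\Sigma A \in \Cell^n_\kappa$ by (2).
    \item The induction hypothesis places both outer terms in $\APh^{2^n}_\kappa$. Lemma \ref{lem:extension} then gives $D \in \lperp{(\Ph^{2^n}_\kappa\circ\Ph^{2^n}_\kappa)} = \APh^{2^{n+1}}_\kappa$.
\end{itemize}
I expect this step to be the main obstacle, namely getting the cofiber sequence $A \to B\amalg C \to D$ right. I would verify it by pasting pushouts: the cofiber of $A \to B \amalg C$ is $(B\amalg C)\amalg_{A} 0$, and comparing this with $B \amalg_A C$ needs care. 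The fallback is to instead use $B\to D\to C/A$ and strengthen the induction to also track cofibers. The two-step approach via the sequences $A\to B\to B/A$ handles this, since $\Sigma$-closure together with pushouts along $0$ shows $C\amalg_A 0 \in \Cell^{n+1}_\kappa$, at the cost of an exponent bookkeeping check.

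For (4), each $\Cell^n_\kappa \subseteq \langle\Cell^0_\kappa\rangle$ is clear, since pushouts are finite colimits. Conversely, $\bigcup_n \Cell^n_\kappa$ contains $\Cell^0_\kappa$ and $0$, and it is closed under pushouts because the $\Cell^n_\kappa$ increase. Finite colimits are generated by pushouts and the initial object, so the two sides agree. Finally, $\Ph^\infty_\kappa \subseteq \Ph^{2^n}_\kappa$ gives $\APh^{2^n}_\kappa \subseteq \APh^\infty_\kappa$, and combining this with (3) proves the inclusion.
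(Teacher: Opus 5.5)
Your parts (1), (2) and (4), and the overall shape of (3), match the paper. The cofiber sequence you ultimately use, $B\amalg C\to D\to\Sigma A$, is exactly the one in the paper's inductive step. The gap is in how you obtain it. You derive it from a cofiber sequence $A\to B\amalg C\to D$, but that sequence is a stable phenomenon (a ``difference map'' $A\to B\oplus C$) and does not exist in a general pointed $\infty$-category.

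For a counterexample, work in $\An_*$ and take the span $\mathrm{B}\Z/4\leftarrow\mathrm{B}\Z/2\to\mathrm{B}\Z/4$ induced by $\Z/2\subseteq\Z/4$; all three objects are $\aleph_1$-compact, so this is the situation of the lemma with $n=0$. By van Kampen, the pushout has $\pi_1\cong\Z/4*_{\Z/2}\Z/4$, which has nontrivial center. On the other hand, the cofiber of any map $\mathrm{B}\Z/2\to\mathrm{B}\Z/4\vee\mathrm{B}\Z/4$ has as $\pi_1$ the quotient of $\Z/4*\Z/4$ by the normal closure of an element of order $\le 2$. That quotient is one of the centerless groups $\Z/4*\Z/4$, $\Z/2*\Z/4$, $\Z/4*\Z/2$. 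So your ``hence also $B\amalg C\to D\to\Sigma A$'' is unsupported as written.

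The missing idea is the paper's rewriting $B\amalg_A C\simeq (B\vee C)\amalg_{A\vee A}A$. Here one pushes out $f\vee g\colon A\vee A\to B\vee C$ along the fold map $\nabla\colon A\vee A\to A$. Pushout pasting then identifies $\cofib(B\vee C\to D)$ with $\cofib(\nabla)\simeq\Sigma A$, with no stability required.

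Your fallback does not recover the statement either. Using $B\to D\to C/A$ together with $C\to C/A\to\Sigma A$ is legitimate, since both are cofiber sequences in any pointed category with pushouts. However, Lemma \ref{lem:extension} then only gives $C/A\in\APh^{2^{n+1}}_\kappa$ and hence $D\in\APh^{2^n+2^{n+1}}_\kappa$. Iterating yields $\Cell^n_\kappa\subseteq\APh^{3^n}_\kappa$. That is enough for (4), but not the bound $2^n$ claimed in (3).

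As phrased, the fallback is also circular. Knowing $C\amalg_A 0\in\Cell^{n+1}_\kappa$ does not help, because the inclusion of $\Cell^{n+1}_\kappa$ into some $\APh^m_\kappa$ is exactly what the induction is trying to establish.
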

\begin{proof}~
\begin{enumerate}
    \item This is clear.

    \item This follows from a straightforward induction, the base case $n=0$ being true by definition.

    \item We do this by induction, the case $n=0$ being clear
        by definition of $\Cell^0_\kappa$ and (1).
        Suppose that $\Cell^n_\kappa \subseteq \APh^{2^n}_\kappa$.
        Let $B \leftarrow A \rightarrow C$ be a span in $\Cell^n_\kappa$ and $P \coloneqq B \amalg_A C$.
        We can rewrite this as $P \simeq (B \vee C)\amalg_{A \vee A} A$,
        which yields a cofiber sequence $B \vee C \to P \to \Sigma A$ by pushout pasting
        and the fact that $\cofib(\nabla \colon A \vee A \to A) \simeq \Sigma A$.
        But now $B \vee C \in \APh^{2^n}_\kappa$ by (1),
        and $\Sigma A \in \Cell^{n}_\kappa \subseteq \APh^{2^n}_\kappa$ by (2).
        Thus Lemma \ref{lem:extension} yields $P \in \APh^{2^{n+1}}_\kappa$,
        concluding the induction.

    \item For the inclusion,
        we clearly have $\APh^n_\kappa \subseteq \APh^\infty_\kappa$
        for all $n \geq 1$, hence this follows from (3).
        For the equality, the inclusion $\supseteq$ is clear,
        so it remains to show that $\bigcup_{n \geq 0}\Cell^n_\kappa$
        is already closed under finite colimits.
        Since each $\Cell^{n}_\kappa$ is closed under coproducts,
        we see that the union admits finite coproducts.
        Since it also admits pushouts by construction
        (any three objects in the union lie in some common stage),
        we conclude.\qedhere
\end{enumerate}
\end{proof}

\begin{lemma}\label{lem:pointed-seq-colim}
    Let $X_\bullet \colon \N \to \cC$ be a sequential diagram.
    Then there is a cofiber sequence
    \[
        \bigvee_n X_n \to \colim_n X_n \to \bigvee_n \Sigma X_n.
    \]
\end{lemma}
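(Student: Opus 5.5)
We realize the sequential colimit as a pushout (the classical telescope or mapping-torus construction) and read off the cofiber sequence by pushout pasting. Throughout we write $s_n \colon X_n \to X_{n+1}$ for the transition maps.

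\textbf{Step 1: the telescope pushout.} Let $\nabla \colon \bigvee_n (X_n \vee X_n) \to \bigvee_n X_n$ be the fold map on each summand. Let $\sigma \colon \bigvee_n (X_n \vee X_n) \to \bigvee_n X_n$ be the map that is $\id_{X_n}$ on the first copy of $X_n$ and $X_n \xrightarrow{s_n} X_{n+1} \hookrightarrow \bigvee_m X_m$ on the second copy. Equivalently, $(\nabla,\sigma)$ is the pair of maps $\id, \mathrm{sh} \colon \bigvee_n X_n \rightrightarrows \bigvee_n X_n$, where $\mathrm{sh}$ is induced by the $s_n$. The standard fact is that the coequalizer of $\id$ and $\mathrm{sh}$ computes $\colim_n X_n$. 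The slick way to see this is via cofinality: $\N$ is the colimit (in $\infty$-categories) of its discrete vertices glued along edges, so a colimit over $\N$ is the colimit over the subdivided zigzag $0 \to (0{\to}1) \leftarrow 1 \to (1{\to}2) \leftarrow \cdots$. The inclusion of this zigzag into $\N$ (sending edges to targets) is cofinal, and a colimit over the zigzag is exactly a coequalizer of two maps between coproducts. We therefore get a pushout square
\[
    \begin{tikzcd}
        \bigvee_n X_n \vee \bigvee_n X_n \arrow[r,"{(\id,\mathrm{sh})}"]\arrow[d,"\nabla"'] & \bigvee_n X_n \arrow[d] \\
        \bigvee_n X_n \arrow[r] & \colim_n X_n,
    \end{tikzcd}
\]
which expresses the coequalizer as a pushout along a fold map. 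This identification is the main point requiring care. I would cite \cite{HTT} (e.g.\ the description of colimits over $\N$ via its $1$-skeleton, or Lurie's treatment of sequential colimits as coequalizers) rather than rebuilding the coherence by hand.

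\textbf{Step 2: take cofibers.} Write $W = \bigvee_n X_n$. By pushout pasting, the cofiber of the right vertical map $W \to \colim_n X_n$ agrees with the cofiber of the left vertical map $\nabla \colon W \vee W \to W$. As already used in the proof of Lemma \ref{lem:cellular}(3), this cofiber is $\Sigma W$. Since suspension, being a left adjoint, commutes with coproducts, $\Sigma W \simeq \bigvee_n \Sigma X_n$. Combining these gives the cofiber sequence
\[
    \bigvee_n X_n \to \colim_n X_n \to \bigvee_n \Sigma X_n,
\]
as claimed. The first map in this sequence is the canonical map induced by the inclusions $X_n \to \colim_n X_n$.

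\textbf{Main obstacle.} The only nonformal input is the coherent identification of $\colim_{\N}$ with the coequalizer of $\id$ and $\mathrm{sh}$ in Step 1. The cofinality argument sketched there handles it. Everything else is pushout pasting, using only that $\cC$ is pointed and cocomplete.
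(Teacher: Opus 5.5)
Your proposal is correct and matches the paper's proof: write $\colim_n X_n$ as the coequalizer of $\id$ and the shift on $\bigvee_n X_n$, rewrite it as a pushout along the fold map $\nabla$, and use pushout pasting to identify the cofiber with $\cofib(\nabla)\simeq\bigvee_n \Sigma X_n$. The differences are cosmetic. You justify the coequalizer description with a cofinal zigzag, where the paper simply calls it standard. You also compute $\cofib(\nabla)\simeq \Sigma W$ and then commute $\Sigma$ with coproducts, where the paper computes the cofiber summandwise.
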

\begin{proof}
    It is standard that we can write $\colim_n X_n$
    as the coequalizer of $\id,\text{shift} \colon \bigvee_n X_n \rightrightarrows \bigvee_n X_n$.
    Rewriting this as a pushout, we obtain the following commutative diagram:
    \[\begin{tikzcd}
        {\bigvee_n X_n \vee \bigvee_nX_n} & {\bigvee_n X_n} & 0 \\
        {\bigvee_n X_n} & {\colim_n X_n} & {\cofib(\nabla).}
        \arrow["{\id \vee \text{shift}}", from=1-1, to=1-2]
        \arrow["\nabla"', from=1-1, to=2-1]
        \arrow[from=1-2, to=1-3]
        \arrow[from=1-2, to=2-2]
        \arrow[from=1-3, to=2-3]
        \arrow[from=2-1, to=2-2]
        \arrow["\lrcorner"{anchor=center, pos=0.125, rotate=180}, draw=none, from=2-2, to=1-1]
        \arrow[from=2-2, to=2-3]
        \arrow["\lrcorner"{anchor=center, pos=0.125, rotate=180}, draw=none, from=2-3, to=1-2]
    \end{tikzcd}\]
    Finally, $\cofib(\nabla) \simeq \bigvee_n \cofib(\nabla \colon X_n \vee X_n \to X_n) \simeq \bigvee_n \Sigma X_n$
    by noting that $\nabla$ agrees with the componentwise codiagonal and commuting coproducts with pushouts.
\end{proof}

\begin{proposition}\label{prop:phantom-nilpotent}
    Let $\cC$ be a pointed $\kappa$-presentable $\infty$-category.
    Then $(\Ph_\kappa^\infty)^2 = 0$,
    i.e.~if $\phi,\psi \in \Ph_\kappa^\infty$ are composable,
    then $\psi\phi \simeq 0$.
\end{proposition}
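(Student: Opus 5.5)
Given composable $X \xto{\phi} Y \xto{\psi} Z$ with $\phi,\psi \in \Ph^\infty_\kappa$, the plan is to write $Y$ as a sequential colimit of objects in $\langle \Cell^0_\kappa\rangle$. Since these objects lie in $\APh^\infty_\kappa$ by Lemma \ref{lem:cellular}(4), the cofiber sequence of Lemma \ref{lem:pointed-seq-colim} together with Lemma \ref{lem:extension} then shows that $\psi\phi \simeq 0$.

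\textbf{Step 1: the cellular tower.} Since $\cC$ is $\kappa$-presentable, every object is a $\kappa$-filtered colimit of $\kappa$-compact objects. Set $Y_0 \coloneqq \coprod_{c\colon C \to Y,\, C\in\cC^\kappa} C$ with its canonical map to $Y$. Given $Y_n \to Y$, form $Y_{n+1}$ as a pushout that kills the "relations" $\coprod C \rightrightarrows Y_n$: these are all pairs of maps from $\kappa$-compact objects into $Y_n$ that become homotopic in $Y$, together with a chosen homotopy. This is the standard construction of a resolution, akin to a coequalizer presentation. Concretely, I would take $Y$ as $\colim$ of its canonical diagram and present that colimit as the geometric realization of the simplicial object $[k]\mapsto \coprod_{C_0\to\cdots\to C_k\to Y} C_0$. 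The skeleta of this realization are iterated pushouts of coproducts of $\kappa$-compact objects, so they lie in $\langle\Cell^0_\kappa\rangle$. The realization is their sequential colimit, so $Y \simeq \colim_n Y_n$ with each $Y_n \in \APh^\infty_\kappa$.

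\textbf{Step 2: the extension argument.} Lemma \ref{lem:pointed-seq-colim} gives a cofiber sequence $\bigvee_n Y_n \to Y \to \bigvee_n \Sigma Y_n$. By Lemma \ref{lem:cellular}(1),(2),(4), both outer terms lie in $\APh^\infty_\kappa$, since $\langle\Cell^0_\kappa\rangle$ is closed under suspension. In particular they lie in $\APh^{m}_\kappa$ for every $m$. Writing $\psi \in \Ph^m_\kappa$ as a composite, Lemma \ref{lem:extension} with $\cP=\cQ=\Ph^m_\kappa$ shows $Y \in \lperp{(\Ph^m_\kappa\circ\Ph^m_\kappa)}$.

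Step 2 does not yet finish the proof: $\phi$ maps \emph{into} $Y$ rather than out of it, so we need something different. Instead I would argue directly. Since $\phi \in \Ph^\infty_\kappa\subseteq\Ph^1_\kappa$, the composite $\bigvee_n X_n \to X \xto{\phi} Y$ for the tower of $X$ vanishes. Apply Lemma \ref{lem:extension} to the cofiber sequence for $X$ with $\cP = \{\phi\}$-type classes. The restriction of $\phi$ to $\bigvee_n X_n$ is null, since $\bigvee X_n \in \APh^\infty_\kappa \subseteq \APh^1_\kappa$. Hence $\phi$ factors through $g\colon \bigvee_n \Sigma X_n \to Y$. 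Then $\psi\phi \simeq \psi g \circ(X\to\bigvee\Sigma X_n)$, and $\psi g\simeq 0$ because $\bigvee_n\Sigma X_n \in \APh^\infty_\kappa$ and $\psi\in\Ph^\infty_\kappa$. So we apply the tower to $X$, not $Y$, using $\phi\in\Ph^1$ on the first term and $\psi\in\Ph^\infty$ on the second.

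\textbf{Main obstacle.} The main work is Step 1: showing that every object of a $\kappa$-presentable pointed $\infty$-category is a sequential colimit of objects of $\langle\Cell^0_\kappa\rangle$. Specifically, one must check that the skeletal filtration of the bar-type simplicial resolution has stages built by finitely many pushouts from coproducts of $\kappa$-compact objects. A degenerate-free (semi-simplicial) resolution should make the $n$-skeleton an iterated pushout along $\partial\Delta^n\otimes(-)$, and $\partial\Delta^n\otimes C$ is a finite colimit of copies of $C$. I would use the semisimplicial version to avoid the latching objects.
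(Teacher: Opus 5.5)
Your final argument is exactly the paper's proof. You resolve the source $X$ (not $Y$) by the Bousfield--Kan simplicial object, filter by skeleta lying in $\langle\Cell^0_\kappa\rangle$, and combine Lemma \ref{lem:pointed-seq-colim} with Lemma \ref{lem:extension}. Your semisimplicial way of seeing that the skeleta are finite colimits is a fine way to justify the finiteness step the paper cites. It uses cofinality of $\Delta_s^\op \subseteq \Delta^\op$ and the fact that the nerve of $\Delta_{s,\le n}$ is a finite simplicial set.

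There is, however, a recurring error in your justification. You assert $\APh^\infty_\kappa \subseteq \APh^1_\kappa$, and in Step 2 that the outer terms ``lie in $\APh^m_\kappa$ for every $m$''. Since $\lperp{(-)}$ reverses inclusions and $\Ph^\infty_\kappa \subseteq \Ph^m_\kappa$, the true inclusion is $\APh^m_\kappa \subseteq \APh^\infty_\kappa$.

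Lemma \ref{lem:cellular} places the $n$-th skeleton only in some $\Cell^{m}_\kappa \subseteq \APh^{2^{m}}_\kappa$, with $m$ growing in $n$. So $\bigvee_n X_n$ is known to lie in $\APh^\infty_\kappa$, but not in $\APh^1_\kappa$. Indeed, if your inclusion held, the same argument would show that every composite of two $\kappa$-phantom maps vanishes, which is not true in general. So your closing phrase ``using $\phi\in\Ph^1$ on the first term'' is wrong as stated.

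The repair is immediate. The restriction of $\phi$ to $\bigvee_n X_n$ vanishes because $\phi$ itself lies in $\Ph^\infty_\kappa$ and $\bigvee_n X_n \in \APh^\infty_\kappa$. In other words, apply Lemma \ref{lem:extension} with $\cP=\cQ=\Ph^\infty_\kappa$, exactly as the paper does.
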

\begin{proof}
    Let $\phi \colon X \to Y$ and $\psi \colon Y \to Z$ be the two
    maps in $\Ph^\infty_\kappa$.
    Since $\cC$ is $\kappa$-presentable,
    we can write $X \simeq \colim(\cC^\kappa_{/X} \to \cC^\kappa \to \cC)$,
    and applying the Bousfield--Kan formula (see e.g.~\cite[Corollary 12.3]{Shah}),
    we obtain a simplicial diagram $X_\bullet \colon \Delta^\op \to \cC$
    with $X \simeq |X_\bullet|$,
    where each $X_n$ is a coproduct of $\kappa$-compact objects,
    hence lies in $\Cell^0_\kappa$.
    Using the skeletal filtration of $X_\bullet$,
    we have
    \[
        |\sk_n X_\bullet| \simeq \colim(\Delta^\op_{\leq n} \subseteq \Delta^\op \xto{X_\bullet} \cC)
        \quad\text{and}\quad
        X \simeq |X_\bullet| \simeq \colim_n |\sk_n X_\bullet|.
    \]
    In particular, each $|\sk_n X_\bullet|$ lies in the finite colimit closure
    of $\{X_n \mid n \geq 0\} \subseteq \Cell^0_\kappa$ (see e.g.~\cite[Example 6.5.3]{nine}),
    so we deduce that $|\sk_n X_\bullet| \in \langle \Cell^0_\kappa\rangle$
    in the notation of Lemma \ref{lem:cellular}.
    Then also $\Sigma |\sk_n X_\bullet| \in \langle \Cell^0_\kappa\rangle$,
    and hence by parts (1) and (4) of that lemma both
    $\bigvee_n |\sk_n X_\bullet|$ and $\bigvee_n \Sigma |\sk_n X_\bullet|$
    lie in $\APh^\infty_\kappa$.
    From Lemmas \ref{lem:extension} and \ref{lem:pointed-seq-colim},
    we deduce that $X \in \lperp{((\Ph^\infty_\kappa)^2)}$, hence $\psi\phi \simeq 0$.
\end{proof}

\section{Tail functors and Theorem \ref{introthm:main}}\label{sec:tails}

In this section, we upgrade the key observations in the proof of
Theorem \ref{thm:1cat} by defining the general notion of a
\textit{tail functor}, generalizing the construction of
Definition \ref{def:1-tail}.
Combining this with the nilpotence result (Proposition \ref{prop:phantom-nilpotent}) of the previous section, we will prove Theorem \ref{introthm:main}.

\begin{definition}
        Let $\cC$ be a pointed bicomplete $\infty$-category, $\kappa$ a regular cardinal, and $1 \leq n \leq \infty$.
        An \textit{$n$-fold $\kappa$-tail functor} on $\cC$ is the datum of a natural transformation of the form
        \[\begin{tikzcd}[column sep=small]
            {\cC } && {\Fun(\cI, \cC)} && \cC
            \arrow[""{name=0, anchor=center, inner sep=0}, "T", curve={height=-18pt}, from=1-1, to=1-3]
            \arrow[""{name=1, anchor=center, inner sep=0}, "\const"', curve={height=18pt}, from=1-1, to=1-3]
            \arrow["{\lim_\cI}", from=1-3, to=1-5]
            \arrow[shorten <=5pt, shorten >=5pt, Rightarrow, from=0, to=1]
        \end{tikzcd}\]
    subject to the following conditions:
    \begin{itemize}
        \item The category $\cI$ is small and $\kappa$-cofiltered.
        \item For every $X \in \cC$, the induced map $\lim_{\cI} TX \rightarrow \lim_{\cI} X \simeq X$ is an $n$-fold $\kappa$-phantom map.
        \item For every $X \in \cC$, the transformation $TX \rightarrow \const X$ admits pointwise sections.
    \end{itemize}
\end{definition}

This definition serves to provide functorial constructions of $n$-fold $\kappa$-phantom maps
via $\kappa$-cofiltered limits.
For any $X \in \cC$, the transformation
$TX \rightarrow \const X$ of functors $\cI \rightarrow \cC$ equivalently
encodes a diagram $\cI \rightarrow \cC_{/X}$ lifting $TX$.
The third of the above conditions says that this diagram takes values in the full subcategory
of $\cC_{/X}$ on those objects $Y \rightarrow X$ admitting sections.
This condition is crucial due to the next proposition.

\begin{proposition}\label{prop:pi0-inj}
    Let $T \colon \cI \rightarrow \cC_{/X}$ be a $\kappa$-cofiltered diagram such that
    the structure map $Ti \rightarrow X$ admits a section for all $i \in \cI$.
    Then, for any $\kappa$-cocompact object $Y$ in $\cC$, the pre-composition map
    \begin{equation*}
            \map_{\cC}(X, Y) \rightarrow \map_{\cC}(\lim_{\cI} Ti, Y) \simeq \colim_{\cI^{\op}} \map_{\cC}(Ti, Y)
    \end{equation*}
    is $\pi_0$-injective.
\end{proposition}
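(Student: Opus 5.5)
The plan is to exploit that a filtered colimit of spaces is computed levelwise on $\pi_0$, and that each stage of the colimit receives $\map_\cC(X,Y)$ injectively on $\pi_0$ thanks to the sections.

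First, identify the map. By $\kappa$-cocompactness of $Y$ (and $\kappa$-cofilteredness of $\cI$), the canonical map $\colim_{i \in \cI^\op} \map_\cC(Ti,Y) \to \map_\cC(\lim_\cI Ti, Y)$ is an equivalence. Under this identification, precomposition with $\lim_\cI Ti \to X$ corresponds to the map into the colimit induced by the cocone of precomposition maps $p_i^* \colon \map_\cC(X,Y) \to \map_\cC(Ti,Y)$, where $p_i \colon Ti \to X$ is the structure map. In other words, the map in question factors as
\[
\map_\cC(X,Y) \to \colim_{\cI^\op} \const \map_\cC(X,Y) \to \colim_{\cI^\op}\map_\cC(Ti,Y).
\]
Since $\cI^\op$ is $\kappa$-filtered, hence weakly contractible, the first map is an equivalence. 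So the map is obtained by applying $\colim_{\cI^\op}$ to the natural transformation $p^* \colon \const \map_\cC(X,Y) \Rightarrow \map_\cC(T-,Y)$.

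Next, pass to $\pi_0$. Filtered colimits in $\An$ commute with $\pi_0$, so on $\pi_0$ we get the filtered colimit of sets of the maps $\pi_0(p_i^*)$. Each $\pi_0(p_i^*)$ is injective: if $s_i$ is a section of $p_i$, then $s_i^* p_i^* \simeq (p_i s_i)^* \simeq \id$, so $p_i^*$ admits a retraction and is therefore $\pi_0$-injective. A filtered colimit of injections of sets is injective, because filtered colimits in $\Set$ preserve monomorphisms, being exact. Hence the composite is injective on $\pi_0$.

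The main subtlety is the identification in the first step. We must ensure that the comparison equivalence $\colim\map(Ti,Y)\simeq\map(\lim Ti,Y)$ is compatible with the cocone coming from $X$, i.e.\ that it is natural, so that the precomposition map really is $\colim_{\cI^\op}$ of $p^*$. This follows from naturality of the canonical comparison map for the diagram $\cI^{\triangleright}\to\cC$ (adjoining $X$ as cone point under $T$), which is formal. Note that only the existence of sections objectwise is used, not any compatibility of the sections across $\cI$, which is why this argument avoids the coherence issue mentioned after Theorem \ref{thm:1cat}.
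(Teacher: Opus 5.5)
Your proposal is correct and follows the same route as the paper: you identify the map as the $\kappa$-filtered colimit over $\cI^\op$ of the precomposition maps $p_i^*$, note that each $p_i^*$ has a retraction and so is $\pi_0$-injective, and use that filtered colimits of injections in $\Set$ are injective. Your extra steps (using weak contractibility of $\cI^\op$, and naturality of the comparison map over the cone $\cI^{\triangleright}\to\cC$) only make explicit what the paper's one-line proof leaves implicit.
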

\begin{proof}
    This map is a $\kappa$-filtered colimit of maps that admit retractions,
    hence its image under $\pi_0$ is a $\kappa$-filtered colimit of injections
    in $\Set$ and thus an injection.
\end{proof}

We now turn to proving the existence of $n$-fold $\kappa$-tail functors
for all regular cardinals $\kappa$ and $1 \leq n \leq \infty$.
For $n=1$, this is implemented by the explicit construction of Definition \ref{def:1-tail}.
We proceed by induction to prove this for all finite $n$,
and then take a suitable limit to obtain the case $n=\infty$.

\begin{proposition}\label{prop:1-tail}
    For any pointed bicomplete $\infty$-category $\cC$ and regular cardinal $\kappa$,
    there exists a ($1$-fold) $\kappa$-tail functor on $\cC$.
    Explicitly, such a functor is given by the diagram
    \[\begin{tikzcd}[column sep=small]
        {\cC } && {\Fun(\kappa^\op, \cC)} && {\cC,} & {S_\kappa(X) \colon \kappa^\op \to \cC,\ \alpha \mapsto \bigvee_{\kappa \setminus \alpha} X}.
        \arrow[""{name=0, anchor=center, inner sep=0}, "{S_\kappa}", curve={height=-18pt}, from=1-1, to=1-3]
        \arrow[""{name=1, anchor=center, inner sep=0}, "\const"', curve={height=18pt}, from=1-1, to=1-3]
        \arrow["{\lim_{\kappa^\op}}", from=1-3, to=1-5]
        \arrow["\nabla"', shorten <=5pt, shorten >=5pt, Rightarrow, from=0, to=1]
    \end{tikzcd}\]
\end{proposition}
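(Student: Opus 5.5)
The plan is to check the three conditions in the definition of a tail functor one at a time. Only the phantom condition needs real work. First, functoriality: I will take $S_\kappa(X)$ to be the functor $\alpha \mapsto \bigvee_{\kappa\setminus\alpha} X$, i.e.\ $X$ tensored with the diagram of sets $\alpha \mapsto \kappa \setminus \alpha$, with transition maps the summand inclusions. Then $\nabla \colon S_\kappa(X) \to \const X$ is induced by the maps of sets $\kappa\setminus\alpha \to *$. Both are natural in $X$, since they come from the colimit-preserving functor $X \otimes -$ on a diagram in $\Set$.

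The index category $\kappa^\op$ is small and $\kappa$-cofiltered because $\kappa$ is regular: any subset of $\kappa$ of size ${<}\kappa$ is bounded. For the sections, each $\kappa\setminus\alpha$ is non-empty. So for every $\alpha$, the inclusion of any single summand $X \to \bigvee_{\kappa\setminus\alpha} X$ is a section of the fold map $\nabla$.

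The main step is to show that $\lim_{\kappa^\op} S_\kappa(X) \xrightarrow{\pi_\emptyset} \bigvee_\kappa X \xrightarrow{\nabla} X$ is $\kappa$-phantom. In fact I will show the stronger claim that $\pi_\emptyset c \simeq 0$ for every $c \colon C \to \lim S_\kappa(X)$ with $C$ $\kappa$-compact.

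\begin{enumerate}
    \item Write $\bigvee_\kappa X \simeq \colim_{\alpha<\kappa} \bigvee_\alpha X$. This is a $\kappa$-filtered colimit by regularity of $\kappa$. By $\kappa$-compactness of $C$, the map $\pi_\emptyset c$ factors as $\iota_\alpha \circ a$ for some $\alpha<\kappa$ and some $a \colon C \to \bigvee_\alpha X$, where $\iota_\alpha$ is the summand inclusion.
    \item Since $\cC$ is pointed, there is a projection $p_\alpha \colon \bigvee_\kappa X \to \bigvee_\alpha X$ collapsing the other summands. It satisfies $p_\alpha\iota_\alpha \simeq \id$, and $p_\alpha$ composed with the inclusion $j_\alpha$ of $\bigvee_{\kappa\setminus\alpha}X$ is zero.
    \item By the limit structure, $\pi_\emptyset \simeq j_\alpha \pi_\alpha$. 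Therefore
    \[a \simeq p_\alpha\iota_\alpha a \simeq p_\alpha \pi_\emptyset c \simeq p_\alpha j_\alpha \pi_\alpha c \simeq 0,\]
    and hence $\pi_\emptyset c \simeq \iota_\alpha a \simeq 0$.
\end{enumerate}

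Composing with $\nabla$ then gives the phantom property. Only the vanishing of a single map is needed, so no coherence problems arise. The one point requiring care is the identification of $\bigvee_\kappa X$ as the $\kappa$-filtered colimit of its partial wedges $\bigvee_\alpha X$, which reduces to the fact that bounded subsets of $\kappa$ form a cofinal $\kappa$-filtered system.
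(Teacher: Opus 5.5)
Your proposal is correct and matches the paper's proof step for step. You factor $\pi_\emptyset c$ through a partial wedge $\bigvee_\alpha X$ by $\kappa$-compactness, then use the retraction $p_\alpha$ (the paper's $0 \vee \id$), which kills $j_\alpha$, to conclude $\pi_\emptyset c \simeq 0$. Your remarks on naturality via tensoring with sets, and on writing $\bigvee_\kappa X$ as a $\kappa$-filtered colimit of partial wedges, only make explicit what the paper leaves implicit.
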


\begin{proof}
    Indeed, $\kappa^{\op}$ is $\kappa$-cofiltered and the fold maps
    $\nabla \colon \bigvee_{\kappa \setminus \alpha} X \rightarrow X$
    admit sections since $\kappa \setminus \alpha \neq \emptyset$
    for all $\alpha \in \kappa$.
    Finally, the induced map on limits may be written as the composite
    \begin{equation*}
        \begin{tikzcd}
            L_\kappa(X) \coloneqq \lim_{\alpha \in \kappa^{\op}} \bigvee_{\kappa \setminus \alpha} X \arrow[r,"\pi_{\emptyset}"] & \bigvee_{\kappa} X \arrow[r,"\nabla"] & X.
        \end{tikzcd}
    \end{equation*}
    Here, already the map $\pi_\emptyset$ is $\kappa$-phantom.
    Namely, if $f \colon C \to L_\kappa(X)$ is any map from a $\kappa$-compact object $C$,
    then we construct the commutative diagram
    \[\begin{tikzcd}
        C & {L_\kappa (X)} & {\bigvee_{\kappa \setminus \beta}X} \\
        {\bigvee_\beta X} & {\bigvee_{\kappa} X}
        \arrow["f", from=1-1, to=1-2]
        \arrow["g"', dashed, from=1-1, to=2-1]
        \arrow["{\pi_\beta}", from=1-2, to=1-3]
        \arrow["{\pi_\emptyset}"', from=1-2, to=2-2]
        \arrow["{j_\beta}", from=1-3, to=2-2]
        \arrow["{i_\beta}"', from=2-1, to=2-2]
    \end{tikzcd}\]
    by using $\kappa$-compactness to factor $\pi_{\emptyset} f$ through $i_\beta$ for some $\beta < \kappa$,
    and then noting that $\pi_\emptyset$ factors through $\pi_\beta$.
    Now $i_\beta$ admits a retraction $0 \vee \id \colon \bigvee_\kappa X \simeq \bigvee_{\kappa \setminus \beta} X \vee \bigvee_{\beta} X \to \bigvee_{\beta}X$,
    which vanishes upon precomposing with $j_\beta$,
    so a diagram chase lets us deduce that $\pi_\emptyset f \simeq 0$.
\end{proof}

\begin{remark}\label{rem:semiadditive-tail}
    If $\cC$ is semiadditive,
    then $\lim_{\aleph_0^\op} S_{\aleph_0}(X) \simeq \fib(\bigoplus_{\aleph_0} X \to \prod_{\aleph_0} X)$.
\end{remark}

\begin{proposition}
    Let $\cC$ be a pointed bicomplete $\infty$-category
    and $\kappa$ a regular cardinal.
    If there exists an $n$-fold $\kappa$-tail functor on $\cC$,
    then there exists an $(n+1)$-fold $\kappa$-tail functor on $\cC$.
\end{proposition}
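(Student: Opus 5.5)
The plan is to build the $(n+1)$-fold tail functor by composing the given $n$-fold $\kappa$-tail functor $(T\colon \cC \to \Fun(\cI,\cC),\ TX \to \const X)$ with the explicit $1$-fold $\kappa$-tail functor $S_\kappa$ of Proposition \ref{prop:1-tail}. The new index category will be $\cJ \coloneqq \cI \times \kappa^\op$. This is small and $\kappa$-cofiltered, because a product of two $\kappa$-cofiltered categories is $\kappa$-cofiltered. On objects, set
\[
    T'X(i,\alpha) \coloneqq S_\kappa(TX(i))(\alpha) = \textstyle\bigvee_{\kappa\setminus\alpha} TX(i).
\]
The structure map is the composite $\bigvee_{\kappa\setminus\alpha} TX(i) \xto{\nabla} TX(i) \to X$, using the fold map of $S_\kappa$ and then the component of $TX \to \const X$. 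Functoriality in $X$ and naturality of the transformation $T'X \to \const X$ come from the functoriality of $T$, $S_\kappa$ and $\nabla$, together with whiskering of natural transformations. No extra coherence has to be produced by hand, since $T' = (S_\kappa)_* \circ T$ as functors $\cC \to \Fun(\cI,\Fun(\kappa^\op,\cC))$.

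The third condition (pointwise sections) is immediate. Each fold map $\nabla$ has a section because $\kappa\setminus\alpha \neq \emptyset$, each $TX(i)\to X$ has a section by assumption, and composites of maps with sections have sections. For the second condition, I would commute limits to write
\[
    \lim_{\cJ} T'X \simeq \lim_{i\in\cI} L_\kappa(TX(i)),
\]
so that the induced map to $X$ factors as
\[
    \lim_{i} L_\kappa(TX(i)) \xto{\ \lim_i (\nabla\pi_\emptyset)\ } \lim_i TX(i) \longrightarrow X .
\]
The second map is $n$-fold $\kappa$-phantom by hypothesis. It therefore suffices to show that the first map is $\kappa$-phantom, because $\Ph_\kappa^1 \circ \Ph_\kappa^n$-composites lie in $\Ph_\kappa^{n+1}$.

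The main obstacle is this last claim, namely that a $\kappa$-cofiltered limit of maps of the form $L_\kappa(Y_i) \to \bigvee_\kappa Y_i$ is again $\kappa$-phantom. A limit of phantom maps need not be phantom in general, so the argument of Proposition \ref{prop:1-tail} must be rerun uniformly in $i$ rather than quoted. I would first rewrite
\[
    \lim_i L_\kappa(Y_i) \simeq \lim_{\alpha}W_\alpha, \qquad W_\alpha \coloneqq \lim_i \textstyle\bigvee_{\kappa\setminus\alpha} Y_i .
\]
The natural splitting $\bigvee_\kappa Y_i \simeq \bigvee_\beta Y_i \vee \bigvee_{\kappa\setminus\beta} Y_i$ gives, after taking $\lim_i$, a retraction $r_\beta\colon W_\emptyset \to \lim_i \bigvee_\beta Y_i$ that vanishes on the image of $W_\beta$. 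The missing input is then a $\beta<\kappa$ such that a given map from a $\kappa$-compact $C$ to $W_\emptyset$ is controlled by its component through $\lim_i\bigvee_\beta Y_i$.

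Compactness of $C$ does not by itself pass through $\lim_i$, so it may not supply this. If this route stalls, I would reverse the order of composition. Then one takes $T'X(i,\alpha) \coloneqq T(S_\kappa X(\alpha))(i)$, and the induced map factors through $\lim_\alpha \bigvee_{\kappa\setminus\alpha}X = L_\kappa X \to X$, which is $\kappa$-phantom by Proposition \ref{prop:1-tail}. The new task is to show that $\lim_\alpha\lim_\cI T(S_\kappa X(\alpha)) \to L_\kappa X$ is $n$-fold phantom, for instance by using naturality of $T$ to compare it with $\lim_\cI T(L_\kappa X)$. I would try whichever of the two orderings makes the phantom factor appear as a single honest map rather than a limit of maps.
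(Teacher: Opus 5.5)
\textbf{There is a genuine gap.} Your reductions are sound: the pointwise sections, the identification $\lim_{\cI\times\kappa^\op}T'X\simeq\lim_i L_\kappa(TX(i))$, and the observation that it suffices to show the first factor is $\kappa$-phantom. But the proposal stops at the step that carries all the content, and neither ordering you suggest closes it.

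In your first ordering, $T'X$ is just $S_\kappa$ applied to $TX$ in the pointed bicomplete category $\Fun(\cI,\cC)$. So Proposition \ref{prop:1-tail} can simply be quoted there, with no uniform rerun needed: $\lim_{\kappa^\op}S_\kappa(TX)\to TX$ is $\kappa$-phantom in $\Fun(\cI,\cC)$. The problem is transporting this along $\lim_\cI$. That works once $\const\colon\cC\to\Fun(\cI,\cC)$ preserves $\kappa$-compact objects, which is guaranteed when $\cI$ is $\kappa$-small. Here $\cI$ is not $\kappa$-small; already for $n=1$ one has $\cI=\kappa^\op$. This is exactly why you cannot find a uniform $\beta$ for a map $C\to\lim_i\bigvee_\kappa Y_i$.

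Your reversed ordering does not help. The map $\lim_\alpha\lim_\cI T(\bigvee_{\kappa\setminus\alpha}X)\to L_\kappa X$ is again a $\kappa^\op$-indexed limit of $n$-fold phantoms. The naturality comparison you propose, $\lim_\cI T(L_\kappa X)\to\lim_\alpha\lim_\cI T(\bigvee_{\kappa\setminus\alpha}X)$, runs the wrong way, so it gives no factorization through an $n$-fold phantom.

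The missing idea is to raise the cardinal of the new tail factor. The definition of a $\kappa$-tail functor places no size bound on the index category, only $\kappa$-cofilteredness.
\begin{itemize}
\item Choose a regular $\lambda\ge\kappa$ with $\cI$ $\lambda$-small. Use $S_\lambda$ on $\Fun(\cI,\cC)$, indexed by $\lambda^\op\times\cI$, which is still $\kappa$-cofiltered.
\item By Proposition \ref{prop:1-tail}, $\lim_{\lambda^\op}S_\lambda TX\to TX$ is $\lambda$-phantom in $\Fun(\cI,\cC)$.
\item Since $\cI$ is $\lambda$-small, $\const$ preserves $\lambda$-compact objects \cite[Proposition 5.3.4.13]{HTT}. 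Hence its right adjoint $\lim_\cI$ preserves $\lambda$-phantom maps. Concretely, a map $C\to\lim_\cI\bigvee_\lambda Y_i$ from a $\lambda$-compact $C$ now factors through $\lim_\cI\bigvee_\beta Y_i$ for a single $\beta<\lambda$. This is precisely the uniformity your retraction argument with $r_\beta$ needed.
\item Since $\lambda$-phantom maps are $\kappa$-phantom, composing with the $n$-fold $\kappa$-phantom $\lim_\cI TX\to X$ gives the required $(n+1)$-fold $\kappa$-phantom map.
\end{itemize}
This is the paper's proof.
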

\begin{proof}
    Let $(T \Rightarrow \const) \colon \cC \to \Fun(\cI, \cC)$
    be an $n$-fold $\kappa$-tail functor on $\cC$.
    Pick a regular cardinal $\lambda \geq \kappa$
    such that $\cI$ is $\lambda$-small and let
    $(S_\lambda \Rightarrow \const) \colon \Fun(\cI,\cC) \to \Fun(\lambda^\op,\Fun(\cI,\cC))$
    be the ($1$-fold) $\lambda$-tail functor on $\Fun(\cI,\cC)$
    provided by Proposition \ref{prop:1-tail}.
    Now consider the diagram
    \[\begin{tikzcd}[column sep=scriptsize]
        {\cC } && {\Fun(\cI, \cC)} && {\Fun(\lambda^\op, \Fun(\cI,\cC))} && {\Fun(\cI,\cC)} && \cC.
        \arrow[""{name=0, anchor=center, inner sep=0}, "T", curve={height=-18pt}, from=1-1, to=1-3]
        \arrow[""{name=1, anchor=center, inner sep=0}, "\const"', curve={height=18pt}, from=1-1, to=1-3]
        \arrow[""{name=2, anchor=center, inner sep=0}, "{S_\lambda}", curve={height=-18pt}, from=1-3, to=1-5]
        \arrow[""{name=3, anchor=center, inner sep=0}, "\const"', curve={height=18pt}, from=1-3, to=1-5]
        \arrow["{\lim_{\lambda^\op}}", from=1-5, to=1-7]
        \arrow["{\lim_\cI}", from=1-7, to=1-9]
        \arrow[shorten <=5pt, shorten >=5pt, Rightarrow, from=0, to=1]
        \arrow[shorten <=5pt, shorten >=5pt, Rightarrow, from=2, to=3]
    \end{tikzcd}\]
    We claim that this witnesses the composite
    $(S_\lambda T \Rightarrow \const_{\lambda^{\op}\times\cI}) \colon \cC \rightarrow \Fun(\lambda^{\op} \times \cI, \cC)$
    as an $(n+1)$-fold $\kappa$-tail functor on $\cC$.

    Indeed, $\lambda^\op \times \cI$ is still $\kappa$-cofiltered.
    Furthermore, $TX \to \const_{\cI}X$ and hence
    $\const_{\lambda^\op}TX \to \const_{\lambda^\op\times \cI}X$
    admit pointwise sections
    by the assumption on $T$,
    and $S_\lambda TX \to \const_{\lambda^\op}TX$
    admits pointwise sections by the assumption on $S_\lambda$.
    Therefore also the composite
    $S_\lambda TX \to \const_{\lambda^\op \times \cI}X$
    admits pointwise sections.

    Finally, we have to show that for $X \in \cC$ the map $\lim_{\lambda^\op \times \cI} S_\lambda TX \to X$ is an $(n+1)$-fold $\kappa$-phantom map.
    The map  $\lim_{\lambda^\op} S_\lambda TX \to TX$
    is $\lambda$-phantom in $\Fun(\cI,\cC)$
    by the assumption on $S_\lambda$.
    Moreover, by \cite[Proposition 5.3.4.13]{HTT},
    $\const \colon \cC \to \Fun(\cI,\cC)$
    preserves $\lambda$-compact objects,
    so that the functor $\lim_\cI$
    preserves $\lambda$-phantom maps.
    It follows that
    \[
        \lim_{\lambda^{\op} \times \cI} S_{\lambda} TX \simeq
        \lim_\cI \lim_{\lambda^\op} S_\lambda TX
        \to \lim_\cI TX
        \to X
    \]
    is the composite of a $\lambda$-phantom map
    followed by an $n$-fold $\kappa$-phantom map by the assumption on $T$.
    Since $\lambda \geq \kappa$, this composite
    is an $(n+1)$-fold $\kappa$-phantom map.
\end{proof}

\begin{proposition}
    Let $\cC$ be a pointed bicomplete $\infty$-category and $\kappa$ a regular cardinal.
    For each $n\ge1$, let $(T_n \Rightarrow \const_{\cI_n}) \colon \cC \rightarrow \Fun(\cI_n, \cC)$ be an $n$-fold $\kappa$-tail functor on $\cC$.
    Then the pullback square
    \begin{equation*}
        \begin{tikzcd}
            T_{\infty}\arrow[d, Rightarrow]\arrow[r, Rightarrow]\arrow[dr,phantom,very near start,"\lrcorner"] & \prod_{n\ge1} T_n\arrow[d, Rightarrow]\\
            \const_{\prod_{n\ge1}\cI_n}\arrow[r, Rightarrow, "\Delta"] & \prod_{n\ge1} \const_{\cI_n}
        \end{tikzcd}
    \end{equation*}
    defines an $\infty$-fold $\kappa$-tail functor $(T_{\infty} \Rightarrow \const) \colon \cC \rightarrow \Fun(\prod_{n\ge1} \cI_n, \cC)$.
\end{proposition}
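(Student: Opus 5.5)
We need to check the three conditions for $T_\infty \colon \cC \to \Fun(\cI,\cC)$, where $\cI \coloneqq \prod_{n\ge1}\cI_n$. Here $T_n X$ and $\const_{\cI_n}X$ are viewed as functors on $\cI$ via the projection $p_n \colon \cI \to \cI_n$, so that $(T_\infty X)(i) = X \times_{\prod_n X} \prod_n T_nX(i_n)$. The plan has three steps.

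\textbf{Step 1: $\cI$ is small and $\kappa$-cofiltered.} A countable product of small categories is small. For cofilteredness, take a $\kappa$-small diagram $K \to \cI$. Each component $K \to \cI_n$ admits a cone, since $\cI_n$ is $\kappa$-cofiltered. Assembling these cones over all $n$ gives a cone in $\cI$, because limits and cones in a product are computed componentwise. This uses the characterization of $\kappa$-cofiltered $\infty$-categories by extension of $\kappa$-small diagrams along $K \subseteq K^{\triangleleft}$.

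\textbf{Step 2: pointwise sections.} For $i = (i_n)_n \in \cI$, choose sections $s_n \colon X \to T_nX(i_n)$ of the structure maps. The map $\prod_n s_n \circ \Delta \colon X \to \prod_n T_nX(i_n)$ lies over $\Delta$. Together with $\id_X$ it therefore induces a map $X \to T_\infty X(i)$, and this is a section of the structure map.

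\textbf{Step 3: $\lim_\cI T_\infty X \to X$ is $\infty$-fold $\kappa$-phantom.} Fix $m \ge 1$. The square defining $T_\infty$ gives, after projecting to the $m$-th factor, a map of diagrams $T_\infty X \to p_m^* T_m X$ over $\const X$. Taking limits over $\cI$ yields a factorization
\[
\lim_\cI T_\infty X \to \lim_\cI p_m^*T_mX \to X .
\]
The middle term must then be identified with $\lim_{\cI_m} T_m X$. The point is that $p_m \colon \cI \to \cI_m$ is initial (the cofinality statement for limits), because $\cI$ is a product of cofiltered categories and the other factors $\prod_{n\ne m}\cI_n$ are cofiltered, hence weakly contractible. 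This is the step I expect to need the most care, but it should follow from the standard fact that a projection $A \times B \to A$ with $B$ weakly contractible is cofinal (and dually initial), via Quillen's Theorem A: the relevant slices are $A_{/a} \times B$, which are weakly contractible.

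With this identification, the second map is $\lim_{\cI_m}T_mX \to X$, which is $m$-fold $\kappa$-phantom by assumption. By the ideal property (Observation), the composite is then $m$-fold $\kappa$-phantom. Since $m$ was arbitrary, the map lies in $\Ph^\infty_\kappa$.

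Finally, naturality in $X$ is automatic, since the pullback is formed in $\Fun(\cC,\Fun(\cI,\cC))$. The identification of the structure maps in Step 3 is compatible with the squares defining $T_\infty$.
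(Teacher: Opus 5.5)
Your proposal is correct and follows essentially the same argument as the paper. The shared steps are componentwise $\kappa$-cofilteredness of $\prod_{n\ge1}\cI_n$, pointwise sections obtained by base-changing the product of the given sections, and factoring $\lim_{\cI} T_\infty X \to X$ through the $m$-fold $\kappa$-phantom map $\lim_{\cI_m} T_m X \to X$ using coinitiality of the projection $\cI \to \cI_m$; the paper justifies this coinitiality by weak contractibility of cofiltered categories, which is exactly what your Theorem A argument spells out.
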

\begin{proof}
    The category $\prod_{n\ge1} \cI_n$ is $\kappa$-cofiltered as a product of $\kappa$-cofiltered categories.
    For any $X \in \cC$, fix $\ell \geq 1$ and consider the following diagram of functors $\prod_{n\ge1} \cI_n \rightarrow \cC$:
    \begin{equation*}
        \begin{tikzcd}
            T_{\infty}X\arrow[r]\arrow[d]\arrow[dr,phantom,very near start,"\lrcorner"] & \prod_{n\ge1} T_nX\arrow[r,"\pi_\ell"]\arrow[d] & T_\ell X\arrow[d]\\
            \const_{\prod_{n \geq 1}\cI_n}X\arrow[r,"\Delta"] & \prod_{n\ge1} \const_{\cI_n} X\arrow[r,"\pi_\ell"] & \const_{\prod_{n \geq 1}\cI_n} X.
        \end{tikzcd}
    \end{equation*}
    The vertical transformation $\prod_{n\ge1}T_nX \rightarrow \prod_{n\ge1}\const_{\cI_n} X$
    admits pointwise sections as it is a product
    of transformations that admit pointwise sections,
    hence its base-change $T_{\infty}X \rightarrow \const_{\prod_{n \geq 1}\cI_n}X$
    also admits pointwise sections.

    Furthermore, the bottom horizontal composite is the identity,
    so taking limits and using that
    $\pi_\ell \colon \prod_{n\ge1} \cI_n \rightarrow \cI_\ell$ is coinitial since
    $\kappa$-cofiltered categories are weakly contractible,
    we see that $\lim_{\prod_{n\ge1} \cI_n} T_{\infty}X \rightarrow X$ factors through
    the $\ell$-fold $\kappa$-phantom map $\lim_{\cI_\ell} T_\ell X \rightarrow X$,
    hence is itself an $\ell$-fold $\kappa$-phantom map.
    As $\ell\ge1$ was arbitrary, we conclude that
    it is an $\infty$-fold $\kappa$-phantom map.
\end{proof}

\begin{corollary}\label{cor:infty-tail}
    Let $\cC$ be a pointed bicomplete $\infty$-category.
    For all regular cardinals $\kappa$,
    there exists an $\infty$-fold $\kappa$-tail functor on $\cC$.\qed
\end{corollary}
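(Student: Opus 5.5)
The statement follows by induction on $n$, with a limit argument at the end, and I would assemble it from the preceding propositions. The base case $n=1$ is Proposition \ref{prop:1-tail}: for any pointed bicomplete $\infty$-category $\cC$ and regular cardinal $\kappa$, the functor $S_\kappa \Rightarrow \const$ indexed by $\kappa^\op$ is a $1$-fold $\kappa$-tail functor. The inductive step is the proposition that upgrades an $n$-fold $\kappa$-tail functor to an $(n+1)$-fold one. That step works inside $\Fun(\cI,\cC)$ and uses the $1$-fold $\lambda$-tail functor for some $\lambda \geq \kappa$ with $\cI$ $\lambda$-small. Iterating it, I obtain for every finite $n \geq 1$ an $n$-fold $\kappa$-tail functor $(T_n \Rightarrow \const_{\cI_n})$.

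Concretely, I would define these recursively. Set $T_1 = S_\kappa$ and $\cI_1 = \kappa^\op$. Given $T_n$, choose $\lambda_n \geq \kappa$ regular with $\cI_n$ $\lambda_n$-small, and set $T_{n+1} = S_{\lambda_n} T_n$ and $\cI_{n+1} = \lambda_n^\op \times \cI_n$. Since there is no canonical choice, I need to pick one tail functor for each $n$ simultaneously. The recursion above fixes all these choices explicitly, e.g.\ taking $\lambda_n$ to be the successor of the cardinality of $\cI_n$ (or $\kappa$ if that is larger), so no appeal to the axiom of choice over a proper class is needed. The family $(T_n)_{n\geq 1}$ is then a countable, set-indexed family of small diagrams.

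Finally, I apply the previous proposition to this family. It states that the pullback $T_\infty$ of $\prod_n T_n \Rightarrow \prod_n \const_{\cI_n}$ along the diagonal $\Delta$ is an $\infty$-fold $\kappa$-tail functor indexed by $\prod_{n\geq1}\cI_n$. The only remaining check is that this product is still a small category, which holds because it is a countable product of small categories. Its $\kappa$-cofilteredness is already handled in that proposition.

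I see no real obstacle here; the corollary is pure bookkeeping. The only point needing care is making the choices of $\lambda_n$ and $T_n$ explicit, so that the countable product in the last step is legitimate.
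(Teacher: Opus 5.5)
Your proposal is correct and is exactly the paper's argument: the base case is Proposition \ref{prop:1-tail}, the finite stages come from iterating the inductive proposition (with $T_{n+1} = S_{\lambda_n}T_n$ indexed by $\lambda_n^\op \times \cI_n$), and the $\infty$-fold tail functor is the pullback along the diagonal from the subsequent proposition. Your explicit recursive choice of the $\lambda_n$ is harmless extra care that the paper leaves implicit.
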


\begin{proposition}\label{prop:main-pointed}
    Let $\cC$ be a pointed presentable $\infty$-category and $\kappa$ a regular cardinal.
    Then every $\kappa$-cocompact object in $\cC$ is zero.
\end{proposition}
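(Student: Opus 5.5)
Let $Y \in \cC$ be $\kappa$-cocompact. We may enlarge $\kappa$: $Y$ is also $\lambda$-cocompact for every regular $\lambda \geq \kappa$, and $\cC$ is $\lambda$-presentable for all sufficiently large regular $\lambda$. So we assume without loss of generality that $\cC$ is $\kappa$-presentable. The goal is to show $\id_Y \simeq 0$, which forces $Y \simeq 0$. The strategy combines the $\infty$-fold $\kappa$-tail functor of Corollary \ref{cor:infty-tail} with the $\pi_0$-injectivity of Proposition \ref{prop:pi0-inj} and the nilpotence result Proposition \ref{prop:phantom-nilpotent}.

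The key step is the following claim: for every $X \in \cC$, every map $f \colon X \to Y$ is null. Fix an $\infty$-fold $\kappa$-tail functor $(T \Rightarrow \const) \colon \cC \to \Fun(\cI,\cC)$ and write $p_X \colon \lim_\cI TX \to X$ for the induced map, which lies in $\Ph^\infty_\kappa$. By Proposition \ref{prop:pi0-inj}, the precomposition map $\pi_0\map_\cC(X,Y) \to \pi_0\map_\cC(\lim_\cI TX, Y)$, $f \mapsto f p_X$, is injective. Hence it suffices to show $f p_X \simeq 0$, since $0 \circ p_X \simeq 0$ as well.

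To get $f p_X \simeq 0$, I would first apply the same reasoning to $Y$ itself, taking $X = Y$ and $f = \id_Y$. By injectivity, it is enough to show $p_Y \simeq 0$, and $p_Y$ is $\infty$-fold phantom. Proposition \ref{prop:phantom-nilpotent} only says products of two such maps vanish, so I would use naturality of $p$: for $p_Y \colon \lim TY \to Y$, naturality gives $p_Y \circ p_{\lim TY} \simeq p_Y \circ \lim T(p_Y)$. The left-hand side is a composite of two $\infty$-fold phantoms, so it vanishes by Proposition \ref{prop:phantom-nilpotent}. Hence $p_Y \circ \lim T(p_Y) \simeq 0$.

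Now I apply injectivity with $X' = \lim TY$ and the two maps $p_Y, 0 \colon X' \to Y$. Their images under precomposition with $p_{X'}$ are $p_Y p_{X'} \simeq 0$ and $0$, which agree. So $p_Y \simeq 0$ in $\pi_0$, and then $\id_Y \circ p_Y \simeq 0 \circ p_Y$ gives $\id_Y \simeq 0$ by injectivity again. Thus $Y \simeq 0$. In fact this argument never needs the naturality detour: it only uses that $p_Y p_{\lim TY}$ is a composite of two $\infty$-fold phantoms.

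The main obstacle I anticipate is bookkeeping rather than new ideas. First, the injectivity statement must be applied to maps into the $\kappa$-cocompact object $Y$, with the source allowed to vary; Proposition \ref{prop:pi0-inj} holds for arbitrary $X$, so this is fine. Second, Proposition \ref{prop:phantom-nilpotent} requires $\kappa$-presentability for the same $\kappa$ used in the tail functor. This is handled by the initial enlargement of $\kappa$ before choosing $T$.
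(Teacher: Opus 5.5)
Your proposal is correct and is essentially the paper's proof. After enlarging $\kappa$ so that $\cC$ is $\kappa$-presentable, the composite $p_Y \circ p_{\lim_\cI TY}$ vanishes by Proposition \ref{prop:phantom-nilpotent}, and two applications of Proposition \ref{prop:pi0-inj} (first to precomposition with $p_{\lim_\cI TY}$, then to precomposition with $p_Y$) yield $p_Y \simeq 0$ and then $\id_Y \simeq 0$; as you already observed, the naturality detour is superfluous.
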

\begin{proof}
    Let $X \in \cC$ be $\kappa$-cocompact.
    After possibly increasing $\kappa$,
    we may assume that $\cC$ is $\kappa$-presentable.
    Let $(T \Rightarrow \const) \colon \cC \rightarrow \Fun(\cI, \cC)$ be
    an $\infty$-fold $\kappa$-tail functor, which exists by Corollary \ref{cor:infty-tail},
    and let $(L \Rightarrow \id) \colon \cC \rightarrow \cC$ be its limit.
    Proposition \ref{prop:phantom-nilpotent} then shows that the composite
    \[
        LLX \to LX \to X
    \]
    is the zero map.
    On the other hand, applying $\map_{\cC}(-, X)$ to this composite
    yields a $\pi_0$-injective map by using Proposition \ref{prop:pi0-inj} twice.
    Together, this implies $\id_X \simeq 0$, hence $X \simeq 0$.
\end{proof}

\begin{theorem}\label{thm:main}
    Let $\cC$ be a presentable $\infty$-category and $\kappa$ a regular cardinal.
    Then every $\kappa$-cocompact object in $\cC$ is $(-1)$-truncated.
\end{theorem}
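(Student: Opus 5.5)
The theorem follows by combining Proposition \ref{prop:main-pointed} with the reduction of Proposition \ref{prop:reduce-to-pointed}. Proposition \ref{prop:main-pointed} is exactly statement (2) of Proposition \ref{prop:reduce-to-pointed}: every $\kappa$-cocompact object in a pointed presentable $\infty$-category is zero. By the equivalence (2) $\Rightarrow$ (1) established there, Theorem \ref{introthm:main}, and hence the present theorem, holds.

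To spell out the reduction, let $\cC$ be presentable, $Y \in \cC$ $\kappa$-cocompact, and $f \colon X \to Y$ an arbitrary map. We must show that $\map_\cC(X,Y)$ is contractible, since $\map_\cC(X,Y)$ is then either empty or contractible for every $X$.

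\begin{enumerate}
    \item The slice $\cC_{X/}$ and the double slice $\cC_{X//X}$ are presentable. The latter is moreover pointed, with zero object $X$.
    \item By Lemma \ref{lem:pointed-reduction}, the object $f$ is $\kappa$-cocompact in $\cC_{X/}$, since $V$ reflects $\kappa$-cocompact objects.
    \item Again by Lemma \ref{lem:pointed-reduction}, the right adjoint $- \times X$ of $U$ preserves $\kappa$-cocompact objects. Hence $A_f = (X \to Y \times X \to X)$ is $\kappa$-cocompact in $\cC_{X//X}$.
    \item Proposition \ref{prop:main-pointed} then gives $A_f \simeq 0$.
    \item The adjunction computation in the proof of Proposition \ref{prop:reduce-to-pointed} identifies $\map_\cC(X,Y)$ with a mapping space into $A_f$, which is therefore contractible.
\end{enumerate}

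No step here is a genuine obstacle; the substantial work lies in the earlier results. Proposition \ref{prop:main-pointed} needed the nilpotence of $\infty$-fold phantoms (Proposition \ref{prop:phantom-nilpotent}) together with the construction of $\infty$-fold tail functors and the $\pi_0$-injectivity of Proposition \ref{prop:pi0-inj}.

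The only points to check are the presentability of $\cC_{X/}$ and $\cC_{X//X}$, which is standard (HTT 5.5.3.10/11), and that the notion of $\kappa$-cocompactness used in Lemma \ref{lem:pointed-reduction} is for the same fixed $\kappa$. Proposition \ref{prop:main-pointed} itself allows $\kappa$ to be enlarged, which is harmless because $\kappa$-cocompact objects are $\lambda$-cocompact for all $\lambda \ge \kappa$.
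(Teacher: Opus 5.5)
Your proposal is correct and matches the paper's proof, which simply combines Proposition \ref{prop:reduce-to-pointed} with Proposition \ref{prop:main-pointed}. Your unpacking of the reduction (via $A_f$ in $\cC_{X//X}$ and the adjunctions of Lemma \ref{lem:pointed-reduction}) is exactly the argument given in the proof of Proposition \ref{prop:reduce-to-pointed}.
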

\begin{proof}
    Combine Propositions \ref{prop:reduce-to-pointed} and \ref{prop:main-pointed}.
\end{proof}

\begin{corollary}\label{cor:bipresentable}
    Let $\cC$ be an $\infty$-category such that both $\cC$ and $\cC^\op$
    are presentable. Then $\cC$ is a small complete lattice.\qed
\end{corollary}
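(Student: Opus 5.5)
The plan is to combine Theorem \ref{thm:main} with the two propositions of Section \ref{sec:posets}; no new input is needed. Since $\cC$ is presentable, Theorem \ref{thm:main} applies, so for every regular cardinal $\kappa$ every $\kappa$-cocompact object of $\cC$ is $(-1)$-truncated. In other words, $\cC$ satisfies the conclusion of Theorem \ref{introthm:main}, which is exactly the hypothesis of Proposition \ref{prop:conjecture}.

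Next I invoke Proposition \ref{prop:conjecture} using that $\cC^\op$ is presentable. Copresentability gives a regular cardinal $\kappa$ and a set of $\kappa$-cocompact objects generating $\cC$ under small limits. These generators are $(-1)$-truncated, and $(-1)$-truncated objects are closed under limits. Hence every object of $\cC$ is $(-1)$-truncated, i.e.\ all mapping anima of $\cC$ are empty or contractible. Thus $\cC$ is (equivalent to) a poset.

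Finally, $\cC$ is a presentable poset. By the first proposition of Section \ref{sec:posets}, a presentable poset is a small complete lattice: joins exist as colimits, meets as limits, and smallness follows because every element is a join of elements from the small set of $\kappa$-compact objects.

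I expect no real obstacle, since all the difficulty is contained in Theorem \ref{thm:main}. The one point to be careful about is that the generation statement in Proposition \ref{prop:conjecture} is really under limits, rather than merely under retracts of limits. This holds for a presentable $\infty$-category: dualizing, every object of $\cC^\op$ is a $\kappa$-filtered colimit of $\kappa$-compact objects. In any case, $(-1)$-truncated objects are also closed under retracts, so this subtlety does not matter.
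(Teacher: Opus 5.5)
Your proposal is correct and follows the paper's own deduction: apply Theorem \ref{thm:main} to get that all $\kappa$-cocompact objects are $(-1)$-truncated, then use Proposition \ref{prop:conjecture} to conclude that $\cC$ is a poset, and finally the first proposition of Section \ref{sec:posets} to conclude that a presentable poset is a small complete lattice. Your remark about generation under limits versus retracts of limits is also handled correctly.
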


It turns out that in some cases the limit of the tail functor
from Proposition \ref{prop:1-tail} is already initial,
allowing one to give a simpler proof of a stronger result in this case.

\begin{proposition}
    Let $\cC$ be a bicomplete $\infty$-category
    such that $\mathsf{pb} \colon \Fun(\biglrcorner, \cC) \to \cC$ is accessible
    and $\cC$ is extensive, i.e.~$\cC_{/\coprod_i X_i} \xto{\simeq} \prod_i \cC_{/X_i}$ for all finite collections of objects $(X_i)_i$
    (e.g.~if $\cC$ is an $\infty$-topos, $\Cat_\infty$, or $\mathsf{Cond}(\An)$).
    Then, for every regular cardinal $\kappa$, every $\kappa$-cocompact object in $\cC$ is terminal.
\end{proposition}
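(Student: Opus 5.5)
Consider the unpointed $\kappa$-tail $L_\kappa(X) = \lim_{\alpha\in\kappa^\op}\coprod_{\kappa\setminus\alpha}X$ of Definition \ref{def:1-tail}, for an arbitrary object $X$. The plan is to show that in an extensive $\infty$-category this limit is initial. Then Proposition \ref{prop:pi0-inj} shows that $\map(X,Y)$ is small enough, and an argument at the level of mapping spaces shows it is contractible.

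\textbf{Step 1: $L_\kappa(X)$ is initial.} As in the proof of Theorem \ref{thm:1cat}, first enlarge $\kappa$ so that pullbacks commute with $\kappa$-filtered colimits. This is harmless, since $Y$ stays $\lambda$-cocompact for $\lambda\ge\kappa$. Set $P_\alpha = L_\kappa(X)\times_{\coprod_\kappa X}\coprod_\alpha X$. Since $\coprod_\kappa X\simeq\colim_\alpha\coprod_\alpha X$ is a $\kappa$-filtered colimit, we get $L_\kappa(X)\simeq\colim_{\alpha<\kappa}P_\alpha$.

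Now use extensivity. Write $\coprod_\kappa X = \coprod_\alpha X\amalg\coprod_{\kappa\setminus\alpha}X$. The map $L_\kappa(X)\to\coprod_\kappa X$ factors through the summand $\coprod_{\kappa\setminus\alpha}X$. So its pullback to the disjoint summand $\coprod_\alpha X$ is the pullback of $\coprod_{\kappa\setminus\alpha}X\to\coprod_\kappa X$ along $\coprod_\alpha X\to\coprod_\kappa X$. In an extensive category coproducts are disjoint (the intersection of complementary summands is initial), and base change along such a map preserves this. Hence $P_\alpha\simeq\emptyset$ for every $\alpha$, and $L_\kappa(X)\simeq\colim\emptyset\simeq\emptyset$.

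I expect Step 1 to be the main obstacle. One must check that binary extensivity suffices for the infinite coproduct decomposition, by grouping into two summands. One must also check that disjointness, i.e. $A\times_{A\amalg B}B\simeq\emptyset$, follows from the stated equivalence $\cC_{/A\amalg B}\simeq\cC_{/A}\times\cC_{/B}$. This should come from applying the equivalence to the object $B\to A\amalg B$, whose image is $(B\times_{A\amalg B}A,\ B)$. Its inverse is given by coproduct, so $B\simeq (B\times_{A\amalg B}A)\amalg B$. Extensivity applied over $B$ itself then forces the extra summand to be initial.

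\textbf{Step 2: conclude.} Let $Y$ be $\kappa$-cocompact. By Step 1, $\colim_{\alpha}\map(\coprod_{\kappa\setminus\alpha}X,Y)\simeq\map(\emptyset,Y)\simeq *$.

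Take $X=\emptyset$. Proposition \ref{prop:pi0-inj} does not need pointedness: the fold maps $\coprod_{\kappa\setminus\alpha}X\to X$ have sections. It therefore gives that $\map(X,Y)\to *$ is $\pi_0$-injective for every $X$, so each $\map(X,Y)$ is empty or connected.

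The same argument applies to the higher homotopy. The map $\map(X,Y)\to\colim_\alpha\map(\coprod_{\kappa\setminus\alpha}X,Y)$ is a $\kappa$-filtered colimit of maps with retractions. Applying $\pi_n$ at any basepoint, each such map is injective on $\pi_n$, because a retract of a map of spaces induces injections on all homotopy groups. A filtered colimit of injections is injective, so $\pi_n\map(X,Y)\hookrightarrow\pi_n(*)=0$. Thus $\map(X,Y)$ is empty or contractible for all $X$, i.e. $Y$ is $(-1)$-truncated.

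Finally, take $X=*$, the terminal object, noting that $\cC$ is bicomplete. Then $\map(*,Y)$ is nonempty, hence contractible. Since $Y$ is subterminal, $\map(Z,Y)\hookrightarrow\map(Z,*)\simeq *$, and this composite is nonempty for every $Z$ via $Z\to *\to Y$. Therefore $Y\simeq *$ is terminal.
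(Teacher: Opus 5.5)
Your Step 1 and the $(-1)$-truncatedness part of Step 2 are sound and follow the paper's route. The paper also shows $L_\kappa(X)\simeq\emptyset$ by writing it as a $\kappa$-filtered colimit of pullbacks to $\coprod_\beta X$ that vanish by extensivity. Your retraction argument on $\pi_n$ is a harmless variant of the paper's computation of $\pi_n$ of $\colim_\alpha\prod_{\kappa\setminus\alpha}\map(X,Y)$.

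\textbf{The gap} is in your final paragraph. You assert that $\map(*,Y)$ is nonempty without justification. This is precisely the step that upgrades the subterminal conclusion of Theorem \ref{thm:main} to \emph{terminal}. Nothing about $*$ or bicompleteness supplies a point $*\to Y$: for instance, $\emptyset\in\Set$ is subterminal and has no such point.

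The missing argument is already contained in your displayed equivalence $\colim_{\alpha<\kappa}\map(\coprod_{\kappa\setminus\alpha}X,Y)\simeq *$. A colimit of empty spaces is empty, so some term $\map(\coprod_{\kappa\setminus\alpha}X,Y)\simeq\prod_{\kappa\setminus\alpha}\map(X,Y)$ is nonempty. Since $\kappa\setminus\alpha\neq\emptyset$, this gives $\map(X,Y)\neq\emptyset$. This is what the paper does, and it holds for every $X$. Combined with your truncatedness result, it yields $\map(X,Y)\simeq *$ for all $X$ at once, so the detour through $X=*$ is unnecessary.

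Minor points:
\begin{itemize}
\item The sentence ``Take $X=\emptyset$'' does not parse as written. You presumably mean that the limit $L_\kappa(X)$ is $\emptyset$.
\item In Step 1, ``base change preserves this'' silently uses that $\emptyset$ is a strict initial object.
\item Your disjointness sketch presupposes $B\times_{A\amalg B}B\simeq B$, which is part of what must be shown.
\end{itemize}
The last two are avoided by using the inverse equivalence (coproduct) directly. The map $\pi_\emptyset\colon L_\kappa(X)\to\coprod_\kappa X$ is the coproduct of $\emptyset\to\coprod_\alpha X$ and $\pi_\alpha\colon L_\kappa(X)\to\coprod_{\kappa\setminus\alpha}X$. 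Hence its pullback $P_\alpha$ to $\coprod_\alpha X$ is $\emptyset$.
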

\begin{proof}
    Up to increasing $\kappa$,
    we may assume that pullbacks commute with $\kappa$-filtered colimits.
    Then
    \[
        \lim_{\alpha \in \kappa^\op}\coprod_{\kappa \setminus \alpha}X
        \eqqcolon L_\kappa(X)
        \simeq \colim_{\beta < \kappa} \coprod_{\beta} X \times_{\coprod_\kappa X} L_\kappa(X)
        \simeq \colim_{\beta < \kappa} \lim_{\beta < \alpha \in \kappa^\op}
        \coprod_{\beta} X \times_{\coprod_\kappa X}\coprod_{\kappa \setminus \alpha} X
        \simeq \empty,
    \]
    where the last equivalence uses that $\coprod_{\beta}X \times_{\coprod_\kappa X} \coprod_{\kappa \setminus \alpha}X \simeq \empty$
    for $\alpha > \beta$ due to extensivity.
    In particular, if $Y \in \cC$ is $\kappa$-cocompact
    and $X \in \cC$ arbitrary, we obtain
    \[
        * \simeq \map_{\cC}(L_\kappa(X), Y)
        \simeq \colim_{\alpha < \kappa} \prod_{\kappa \setminus \alpha}\map_{\cC}(X,Y).
    \]
    So $\map_{\cC}(X,Y)$ is non-empty.
    Since $\pi_0$ commutes with products and filtered colimits,
    we deduce $* \cong \colim_{\alpha < \kappa} \prod_{\kappa \setminus \alpha} \pi_0\map_{\cC}(X,Y)$,
    which is impossible for $|\pi_0\map_{\cC}(X,Y)|\geq 2$.
    Thus $\pi_0\map_{\cC}(X,Y) \cong *$, and the same argument with $\pi_n$ for $n \geq 1$
    then shows that $\pi_*\map_{\cC}(X,Y) \cong *$, so $\map_{\cC}(X,Y) \simeq *$.
\end{proof}

Finally, we derive a curious structural result about presentable $\infty$-categories.

\begin{proposition}
    Let $\cC$ be a presentable $\infty$-category and
    $L(\cC) \coloneqq \bigcup_{\kappa} \cC^{\co\kappa} \subseteq \cC$
    be the full subcategory on objects which are $\kappa$-cocompact
    for some regular cardinal $\kappa$.
    Then $L(\cC)$ is a small complete lattice
    and its inclusion admits a coaccessible left adjoint
    $p \colon \cC \rightarrow L(\cC)$.
\end{proposition}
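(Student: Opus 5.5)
By Theorem \ref{thm:main}, every object of $L(\cC)$ is $(-1)$-truncated, so $L(\cC)$ is a poset. The plan is to show that $L(\cC)$ is closed under small limits in $\cC$ and is essentially small. The left adjoint is then $p(X) \coloneqq \lim\{Y \in L(\cC) \mid X \to Y\}$, a limit over a set.

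\textbf{Closure under limits.} Here we use that $L(\cC)$ is a union over $\kappa$.
\begin{itemize}
    \item Finite limits: if $Y_1,\dots,Y_k$ are $\kappa_i$-cocompact, all are $\kappa$-cocompact for $\kappa = \max \kappa_i$. Finite limits of $\kappa$-cocompact objects are $\kappa$-cocompact (the dual of finite colimits of $\kappa$-compact objects), so $L(\cC)$ is closed under finite limits.
    \item Arbitrary products $\prod_{i\in I} Y_i$: choose $\kappa$ regular with $\kappa > |I|$ and each $Y_i$ $\kappa$-cocompact. This is possible because $I$ is a set, so we can take $\kappa$ larger than the supremum. Then $\kappa$-small limits of $\kappa$-cocompact objects are $\kappa$-cocompact (dual of \cite[Corollary 5.3.4.15]{HTT}), so the product lies in $L(\cC)$.
\end{itemize}
Together these give closure under all small limits, since in a poset every limit is a meet, computable as a product followed by equalizers that are trivial. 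More precisely, in $\cC$ a limit of a diagram of $(-1)$-truncated objects with mutually $(-1)$-truncated mapping spaces is the product of the objects in its image. Hence $L(\cC)$ has all meets computed in $\cC$.

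\textbf{Smallness.} This is the step I expect to be the main obstacle: $L(\cC)$ is a priori a union over a proper class of $\kappa$. Fix $\kappa_0$ with $\cC$ $\kappa_0$-presentable, and a set $G$ of $\kappa_0$-compact generators. A $(-1)$-truncated object $Y$ is determined up to equivalence by the set $S_Y = \{C \in G \mid \map(C,Y) \neq \emptyset\}$. Indeed, for every $X \in \cC$ we have $\map(X,Y) \neq \emptyset$ if and only if every $C \to X$ with $C \in G$ has $C \in S_Y$. This holds because $\map(-,Y)$ sends the canonical colimit $X \simeq \colim_{C\to X} C$ to a limit of spaces, each $\emptyset$ or $*$. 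Therefore the $(-1)$-truncated objects, and in particular $L(\cC)$, form a set of size at most $2^{|G|}$. Consequently $L(\cC)$ is a small poset with all small meets, hence a small complete lattice.

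\textbf{The adjoint and coaccessibility.} For $X \in \cC$, define $p(X)$ as the meet, in $L(\cC)$, of the set of $Y \in L(\cC)$ receiving a map from $X$. This is a small limit, so it lies in $L(\cC)$, and $X$ maps to it by the universal property. Since mapping spaces into $L(\cC)$ are $\emptyset$ or $*$, this gives the adjunction $p \dashv \inc$.

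For coaccessibility, since $L(\cC)$ is small we choose $\lambda$ such that every object of $L(\cC)$ is $\lambda$-cocompact. Then $p$ preserves $\lambda$-cofiltered limits: for such a limit $\lim X_i$ and each $Y \in L(\cC)$,
\[
\map(\lim_i X_i, Y) \simeq \colim_i \map(X_i, Y),
\]
so a map $\lim_i X_i \to Y$ exists if and only if some $X_i \to Y$ exists. Hence $p(\lim_i X_i) = \bigwedge_i p(X_i) = \lim_i p(X_i)$ in $L(\cC)$. Moreover, $L(\cC)$ is bipresentable by the first proposition of Section \ref{sec:posets}, so $p$ is a coaccessible functor between copresentable categories.
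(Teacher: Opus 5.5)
Your proof is correct, and its skeleton matches the paper's. Theorem \ref{thm:main} makes $L(\cC)$ a poset. Closure under small limits comes from choosing a regular cardinal exceeding both the size of the diagram and all the relevant cocompactness cardinals. $p(X)$ is the meet of everything $X$ maps to. Your coaccessibility argument is also the paper's: from ``$p(\lim_i X_i)\le Y$ iff $p(X_i)\le Y$ for some $i$'', apply it to $Y=p(X_j)$ and to $Y=p(\lim_i X_i)$ to get both inequalities.

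The genuine difference is the smallness step.
\begin{itemize}
\item The paper cites that $\tau_{\le-1}\cC\subseteq\cC$ is an accessible localization \cite[Proposition 5.5.6.18]{HTT}. Hence it is a presentable poset and so a small complete lattice. The paper then obtains $p$ as the composite of that localization with the left adjoint of $L(\cC)\subseteq\tau_{\le-1}\cC$ given by the dual adjoint functor theorem.
\item You instead show by hand that a $(-1)$-truncated object is determined by which $\kappa_0$-compact objects map to it. This is self-contained, gives the explicit bound $2^{|G|}$, and lets you write $p$ down directly. The paper's route is shorter but rests on the presentability of $\tau_{\le-1}\cC$.
\end{itemize}
For your argument, take $G$ to be a set of representatives of all of $\cC^{\kappa_0}$, so that the canonical colimit over $G_{/X}$ really recovers $X$; a mere generating set is not dense in general. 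Alternatively, observe that the class of $X$ with $\map(X,Y)\neq\emptyset \Leftrightarrow \map(X,Y')\neq\emptyset$ is closed under small colimits.

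Two minor remarks.
\begin{itemize}
\item The separate treatment of finite limits is redundant. You can take an arbitrary small diagram $D\colon \cI\to L(\cC)$ and choose $\kappa$ with $\cI$ $\kappa$-small, as the paper does; or note that products alone suffice in a poset.
\item Your last sentence is false as stated. $\cC$ is not copresentable in general: by Corollary \ref{cor:bipresentable} it would then be a poset. So ``coaccessible'' here should simply mean that $p$ preserves $\lambda$-cofiltered limits, which you have already established.
\end{itemize}
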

\begin{proof}
    The full subcategory $\tau_{\le-1}\cC \subseteq \cC$
    on the $(-1)$-truncated objects is an accessible localization
    by \cite[Proposition 5.5.6.18]{HTT}.
    In particular $\tau_{\le-1}\cC$ is a presentable poset,
    i.e.~a small complete lattice.
    By Theorem \ref{thm:main}, we have $L(\cC) \subseteq \tau_{\le-1}\cC$,
    hence this is also a small poset.
    Moreover, for any small diagram $D \colon \cI \rightarrow L(\cC)$,
    there exists a sufficiently large regular cardinal $\kappa$
    such that $\cI$ is $\kappa$-small and $D$ factors through
    the full subcategory on the $\kappa$-cocompact objects.
    This implies $\lim_{\cI} D$ is again $\kappa$-cocompact,
    hence $L(\cC) \subseteq \tau_{\le-1}\cC$ is closed under small limits,
    which  implies that $L(\cC)$ is a small complete lattice
    and that its inclusion into $\tau_{\le-1}\cC$ admits a left adjoint
    by the dual adjoint functor theorem.
    In total, we obtain a composite left adjoint
    $p \colon \cC \rightarrow L(\cC)$.

    This left adjoint $p$ maps an object $X$ to the minimal object
    $pX$ in $L(\cC)$ receiving a (necessarily unique) map $X \rightarrow pX$.
    Since $L(\cC)$ is small, there exists a sufficiently
    large regular cardinal $\kappa$ so that $L(\cC) = \cC^{\co\kappa}$.
    We claim that $p$ is $\kappa$-coaccessible.
    If $D \colon \cI \rightarrow \cC$ is a $\kappa$-cofiltered diagram,
    then $p\lim_{\cI}D \le \lim_{\cI}pD$.
    However, the map $\lim_{\cI}D \rightarrow p\lim_{\cI}D$
    factors through $\pi_i \colon \lim_{\cI}D \rightarrow Di$
    for some $i \in \cI$ since $\cI$ is $\kappa$-cofiltered and
    the target is a $\kappa$-cocompact object.
    This implies $\lim_{\cI}pD \le pDi \le p\lim_{\cI}D$,
    so all the inequalities are equalities.
\end{proof}

\begin{remark}
    If $P$ is a small complete lattice, then $L(P) \cong P$.
    Furthermore, observe that if $\cC_i,\,i \in I$ is a set of presentable $\infty$-categories,
    then $L(\prod_{i \in I} \cC_i) \simeq \prod_{i \in I} L(\cC_i)$.
    Also, if $\cC,\cD$ are presentable $\infty$-categories,
    then $L(\cC \star \cD) \cong L(\cC) \star L(\cD)$
    (indeed, the join $\cC \star \cD$ is again a presentable $\infty$-category).
\end{remark}

\begin{remark}
    Furthermore, it is possible to show that $\cC \mapsto L(\cC)$ assembles into a left adjoint
    to the inclusion of the category of small complete lattices and left adjoints
    into the category of presentable $\infty$-categories and coaccessible left adjoints.
\end{remark}

\section{Cosumpactness and measurable cardinals}\label{sec:measurable}

In this section, we introduce a weakening of $\kappa$-compactness
we refer to as $\kappa$-sumpactness, which only tests against coproducts
instead of general $\kappa$-filtered colimits.
In Subsection \ref{subsec:distr}, we show that the analogue of Theorem \ref{introthm:main} for $\kappa$-cosumpact objects holds for infinitary distributive $\infty$-categories
(Proposition \ref{prop:distr-case}),
and in Subsection \ref{subsec:measurable}
we prove Theorem \ref{introthm:measurable}.
We now define the notion of $\kappa$-sumpactness.

\begin{definition}\label{def:cosumpact}
    Let $\cC$ be an $\infty$-category with small coproducts and $\kappa$ a regular cardinal.
    We say that $X \in \cC$ is \textit{$\kappa$-sumpact} if for any set $I$,
    any $\kappa$-filtered union $I = \bigcup_{j \in J} I_j$
    of subsets $I_j \subseteq I$,
    and any collection of objects $Y_i \in \cC$,\,$i \in I$,
    the following canonical map is an equivalence:
    \[
        \colim_{j \in J} \map_\cC(X,\coprod_{i \in I_j}Y_i) \to \map_\cC(X, \colim_{j \in J}\coprod_{i \in I_j}Y_i) \simeq \map_\cC(X, \coprod_{i \in I} Y_i).
    \]
    Dually, and more importantly for us, if $\cC$ admits small products,
    then we say $X \in \cC$ is \textit{$\kappa$-cosumpact} if $X$ is $\kappa$-sumpact in $\cC^\op$, which means that the map
    \[
        \colim_{j \in J} \map_\cC(\prod_{i \in I_j}Y_i, X)
        \to \map_\cC(\lim_{j \in J^\op}\prod_{i \in I_j}Y_i, X)
        \simeq \map_\cC(\prod_{i \in I}Y_i, X)
    \]
    is an equivalence.
\end{definition}

We can always write $I$ as the $\kappa$-filtered union
of its $\kappa$-small subsets,
so in particular any map from a $\kappa$-sumpact
object into a coproduct factors through a $\kappa$-small ``sub-coproduct''.

\begin{convention}
    We will abbreviate ``$\aleph_0$-(co)sumpact'' to ``(co)sumpact''.
\end{convention}

\begin{remark}
    Any $\kappa$-compact object is $\kappa$-sumpact,
    and for $\lambda \geq \kappa$ another regular cardinal,
    any $\kappa$-sumpact object is $\lambda$-sumpact.
\end{remark}

\begin{example}
    If the functor $\map_{\cC}(X,-)$ preserves coproducts,
    then $X$ is sumpact in $\cC$.
    This is often (e.g.~if $\cC$ is an $\infty$-topos) equivalent to being connected in the sense of being non-initial and admitting no non-trivial coproduct decomposition.
\end{example}

\begin{example}
    Let $\Ab^{\text{tf}} \subseteq \Ab$
    be the reflective subcategory  of torsion-free abelian groups.
    Then the Hom functor $\Hom(\Q,-) \colon \Ab^\text{tf} \to \Ab$
    sends a torsion-free abelian group to its subgroup
    of divisible elements.
    This functor preserves coproducts\footnote{Note though that the set-valued $\Hom(\Q,-) \colon \Ab^\text{tf} \to \Set$ does not preserve coproducts.},
    which implies that $\Q \in \Ab^\text{tf}$ is sumpact.
    On the other hand, the expression
    $\Q \cong \colim_n (\Z \xto{2} \Z \xto{3} \Z \xto{4} \cdots)$
    implies that $\Q$ is not compact in $\Ab^\text{tf}$.
\end{example}

\begin{remark}\label{rem:triangulated-terminology}
    In the theory of triangulated categories,
    an object is called $\kappa$-small if it is $\kappa$-sumpact in our terminology (\cite[Definition 4.1.1]{Neeman}).
    In the case $\kappa = \aleph_0$,
    the $\aleph_0$-small objects are also called \textit{compact}
    (\cite[Definition 4.2.7]{Neeman}).
    This is consistent with our terminology insofar as,
    in a cocomplete stable $\infty$-category $\cC$,
    an object $X$ is compact precisely if it is sumpact
    precisely if the mapping spectrum functor $\Hom_{\cC}(X, -) \colon \cC \rightarrow \Sp$ preserves colimits (it always preserves finite colimits by stability).

    For uncountable regular cardinals $\kappa$,
    there is also a notion of $\kappa$-compact object
    in a triangulated category (\cite[Definition 4.2.7]{Neeman}).
    For a stable $\infty$-category $\cC$ and an object $X \in \cC$,
    we do not know how $X$ being $\kappa$-compact in $h\cC$ in this sense
    relates to $X$ being $\kappa$-compact in $\cC$.
    If, however, $\cC$ is furthermore $\kappa$-presentable,
    it is possible to show the triangulated category $h\cC$ is $\kappa$-perfectly generated (\cite[Definition 8.1.4]{Neeman}),
    and the two conditions on $X$ are equivalent.
\end{remark}

\begin{remark}\label{rem:stable-cosumpact}
    Let $\cC$ be a presentable stable $\infty$-category.
    Then, by Theorem \ref{introthm:main} and the previous remark,
    any cosumpact object in $\cC$ is zero.
    We do not know whether also, for uncountable regular cardinals $\kappa$,
    the $\kappa$-cosumpact objects in $\cC$ are zero.
    In the case where $\cC$ is compactly generated,
    there is a closely related result of Neeman.
    Concretely, \cite[Appendix E.1]{Neeman}
    (translated from the triangulated to the stable $\infty$-categorical setting)
    shows that if $\cC$ is a non-trivial compactly generated stable $\infty$-category,
    then there exists an object $\mathbb{BC} \in \cC$
    (an analogue of the Brown--Comenetz dual spectrum $I_{\Q/\Z} \in \Sp$
    defined via Brown representability),
    which is not $\kappa$-cosumpact (cf.~Remark \ref{rem:triangulated-terminology} for the terminology) for any regular cardinal $\kappa$.
    In particular, he uses this to prove that non-trivial
    compactly generated stable $\infty$-categories are not copresentable,
    which is also an instance of Corollary \ref{introcor:bipresentable}.
\end{remark}

\subsection{Infinitary distributive categories}\label{subsec:distr}

In this subsection, we prove that for infinitary distributive
$\infty$-categories $\cC$, Theorem \ref{introthm:main} extends
to $\kappa$-cosumpact objects.
First, we show an auxiliary lemma.

\begin{lemma}\label{lem:factor-through-both-proj}
    Let $\cC$ be an $\infty$-category with finite products and $X \in \cC$.
    If there is a map $f \colon Y \times Z \to X$
    which admits a section and factors through both projections $\pr_Y$ and $\pr_Z$,
    then $X$ is $(-1)$-truncated.
\end{lemma}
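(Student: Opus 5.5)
Let $s \colon X \to Y \times Z$ be a section of $f$, so $fs \simeq \id_X$, and write $f \simeq g\,\pr_Y \simeq h\,\pr_Z$ for maps $g \colon Y \to X$ and $h \colon Z \to X$. We need to show that $\map_\cC(W,X)$ is empty or contractible for every $W \in \cC$. The plan is to realise an arbitrary pair of maps $u, v \colon W \to X$ as the images of one map $W \to Y \times Z$. Put $a \coloneqq \pr_Y s$ and $b \coloneqq \pr_Z s$; then $g a \simeq g\,\pr_Y s \simeq fs \simeq \id_X$, and likewise $h b \simeq \id_X$. Given $u, v$, let $w \coloneqq (au, bv) \colon W \to Y \times Z$. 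Then
\[
f w \simeq g\,\pr_Y w \simeq g a u \simeq u,
\qquad
f w \simeq h\,\pr_Z w \simeq h b v \simeq v,
\]
so $u \simeq v$. This shows $\pi_0\map_\cC(W,X)$ has at most one element.

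For the higher homotopy there are two routes. The first is to run the same argument at the level of mapping spaces. Postcomposition with $f$ gives a map $\map(W,Y) \times \map(W,Z) \to \map(W,X)$. It factors through the projection onto the first factor, via $g_*$, and also through the projection onto the second factor, via $h_*$. It has a section $(a_*, b_*)$ up to the given homotopies. This reduces the claim to the statement in $\An$: if a space $M$ is a retract of $A \times B$ through a map that factors through both projections, then $M$ is empty or contractible.

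The second route is to note that the hypotheses are preserved by $\map(W,-)$, since it preserves products, sections and factorizations. So it suffices to prove the lemma in $\An$, and then use that $X$ is $(-1)$-truncated exactly when every $\map(W,X)$ is. I expect this step, handling the higher homotopy coherently, to be the main obstacle. The rest is diagram chasing.

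In $\An$ the argument goes as follows. Let $M$ be nonempty, with $r \colon A \times B \to M$ and section $\sigma$, and $r \simeq g'\pr_A \simeq h'\pr_B$. Then $M$ is a retract of $A \times B$, and the identity of $M$ factors as
\[
M \xrightarrow{\sigma} A \times B \xrightarrow{r} M .
\]
Write $\sigma = (\alpha, \beta)$ and fix the basepoint $\beta(m_0) \in B$. Consider the map
\[
M \to M, \qquad m \mapsto r\bigl(\alpha(m), \beta(m_0)\bigr) .
\]
Since $r$ factors through $\pr_A$, this map is homotopic to $r\sigma \simeq \id$. Since $r$ factors through $\pr_B$, it is also homotopic to the constant map $h'(\beta(m_0))$. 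So $\id_M$ is null-homotopic and $M$ is contractible. This settles the case of $\An$, and with it the lemma.
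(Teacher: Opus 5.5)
Your proof is correct. It differs from the paper mainly in how you handle higher homotopy, and that part turns out to be unnecessary.

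The paper shows directly that the diagonal $\Delta\colon X\to X\times X$ is an equivalence. In your notation $\Delta f\simeq g\times h$, and $f\circ(a\times b)\colon X\times X\to X$ is inverse to $\Delta$ in $h\cC$. Indeed $(a\times b)\Delta=s$, and $(g\times h)(a\times b)=ga\times hb\simeq\id$.

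Your first paragraph is exactly this construction applied to a pair $(u,v)$, and it already completes the proof. Take the universal case $W=X\times X$, $u=\pr_1$, $v=\pr_2$. It gives $\pr_1\simeq\pr_2$, hence $\Delta\pr_1\simeq(\pr_1,\pr_2)=\id_{X\times X}$, so $\Delta$ is an equivalence. With finite products available, $(-1)$-truncatedness is visible in the homotopy category, so the higher-homotopy ``obstacle'' you anticipated does not arise.

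Your Yoneda reduction to $\An$ is also valid. So is the null-homotopy of $\id_M$ via $m\mapsto r(\alpha(m),\beta(m_0))$: this map is $\simeq g'\alpha\simeq r\sigma$, and also homotopic to the constant map at $h'(\beta(m_0))$. That argument alone proves the lemma, which makes your $\pi_0$ step redundant. Compared with the paper, it costs a passage to mapping spaces and a choice of basepoint, whereas the paper's inverse to $\Delta$ is point-free.
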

\begin{proof}
    We have a commutative diagram
    \[\begin{tikzcd}
        X & {Y \times Z} & Z \\
        & Y & X,
        \arrow["s", from=1-1, to=1-2]
        \arrow["{\pr_Z}", from=1-2, to=1-3]
        \arrow["{\pr_Y}"', from=1-2, to=2-2]
        \arrow["f"{description}, from=1-2, to=2-3]
        \arrow["\phi", from=1-3, to=2-3]
        \arrow["\psi"', from=2-2, to=2-3]
    \end{tikzcd}\]
    where $fs \simeq \id_X$.
    By definition, we need to show that the diagonal $\Delta \colon X \to X \times X$
    is an equivalence.
    Due to $\Delta f \simeq \psi \times \phi \colon Y \times Z \to X \times X$,
    we observe that the following composite is inverse to $\Delta$:
    \[
        X \times X \xto{\pr_Y s\,\times\,\pr_Z s} Y \times Z \xto{f} X.\qedhere
    \]
\end{proof}

\begin{proposition}\label{prop:distr-case}
    Let $\cC$ be a bicomplete $\infty$-category and $\kappa$ a regular cardinal.
    Suppose that $\cC$ is \textnormal{$\kappa^+$-distributive}, i.e.~for every $X \in \cC$,
    the functor $X \times -$ preserves coproducts of size $\leq \kappa$ (e.g.~if $\cC$ is cartesian closed).
    Then every $\kappa$-cosumpact object in $\cC$ is $(-1)$-truncated.
\end{proposition}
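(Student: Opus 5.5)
The plan is to reduce to Lemma \ref{lem:factor-through-both-proj}: for an arbitrary $\kappa$-cosumpact $X$, construct a product $Y \times Z$ and a map $f \colon Y \times Z \to X$ that admits a section and factors through both projections. We take the ``tail'' products $Y = \prod_{A} X$ and $Z = \prod_{B} X$, where $A \sqcup B = \kappa$ splits $\kappa$ into two disjoint cofinal pieces (say $A$ the even ordinals and $B$ the odd ones, each of size $\kappa$). Then $\prod_\kappa X \simeq Y \times Z$. Cosumpactness will force the relevant map to factor through $\kappa$-small sub-products, and distributivity will let us push this into one factor.

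Concretely, I would first apply $\kappa$-cosumpactness to $\prod_{\kappa} X$, written as the $\kappa$-filtered union of its $\kappa$-small subsets. This shows that the identity-induced ``diagonal-type'' maps factor through small sub-products. The map to use, however, is not the projection but the fold map. Products do not have a fold map, so the codiagonal has to be replaced, and this is where distributivity enters. Consider the coproduct $\coprod_{\kappa} X$. By $\kappa^+$-distributivity, applied iteratively or via products of coproducts of size $\le\kappa$, we have
\[\prod_{A} X \times \coprod_{\kappa} 1 \simeq \coprod_\kappa \prod_A X.\]
An alternative I find cleaner is to work with $W \coloneqq \prod_{\kappa}X$ and the map $g \colon W \times \coprod_{\kappa} * \simeq \coprod_{\alpha<\kappa} W \to X$ which is $\pr_\alpha$ on the $\alpha$-th summand. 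Here $\coprod_\kappa *$ is a coproduct of size $\kappa$, so distributivity applies.

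Next, set $Y = W$ and $Z = \coprod_\kappa *$, with $f = g \colon Y \times Z \to X$. A section of $f$ is given by $X \xrightarrow{(\Delta, \iota_0)} W \times \coprod_\kappa *$, since $\pr_0\Delta = \id$. The map $f$ factors through $\pr_Y$ if it is the composite of $\pr_Y$ with a map $W \to X$. To get such a factorization, I would use $\kappa$-cosumpactness of $X$ on $W = \prod_\kappa X$. Rather than factor $f$ itself, I would work with its transposes: for each summand, $f$ restricts to $\pr_\alpha$. It is therefore more convenient to use cosumpactness directly on $\Hom(\prod_{\kappa} X, X)$ and compare $f$ with the map $\pr_Y$ followed by something.

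The main obstacle is exactly here: arranging the data so that both factorizations exist. My fallback construction is $Y = \prod_\kappa X$ and $Z = \prod_\kappa X$ again, with $f\colon Y\times Z \simeq \prod_{\kappa\sqcup\kappa}X \to X$ chosen via cosumpactness so that it only depends on a $\kappa$-small set of coordinates. The difficulty is making the dependence land in both sides simultaneously. Distributivity should resolve this: the object $\prod_\kappa X \times \coprod_\kappa *$ is a coproduct of $\kappa$ copies, and cosumpactness then says the map restricted to each summand factors through a small sub-product. Swapping the roles by a reindexing automorphism of $\kappa$ should then yield factorization through the other factor. I expect this bookkeeping, namely which product is chosen and how distributivity converts cosumpactness into a statement about a single factor, to be the real content of the proof.
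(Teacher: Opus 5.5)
There is a genuine gap. You do find the right map. With $C \coloneqq \coprod_\kappa *$, distributivity identifies $C \times \prod_\kappa X \simeq \coprod_{\beta<\kappa}\prod_\kappa X$, which gives $g \colon C\times\prod_\kappa X \to X$ equal to $\pr_\beta$ on the $\beta$-th summand. This is exactly the map the paper uses. But you never extract anything from it, and both of the ways you suggest for finishing fail.

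Taking $Y = \prod_\kappa X$, $Z = C$, $f = g$ in Lemma \ref{lem:factor-through-both-proj} is circular. A factorization of $g$ through $\pr_Y$ would already say that all the projections $\pr_\beta$ are homotopic, which is essentially the conclusion you want. A factorization through $\pr_Z$ is even less plausible.

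Your fallback, that cosumpactness makes the restriction of $g$ to each summand factor through a small sub-product, is vacuous: $\pr_\beta$ trivially factors through the projection onto $\{\beta\}$. A bound that depends on $\beta$ never yields a single map factoring through two complementary projections. Reindexing automorphisms of $\kappa$ only trade one projection for another.

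The missing idea is a \emph{uniform} bound. Apply $\kappa$-cosumpactness to $g$ itself, viewed as a map out of the product indexed by $\{c\}\sqcup\kappa$: the factor $C$ sits at $c$ and copies of $X$ sit elsewhere. Write this index set as the $\kappa$-filtered union of the subsets $\{c\}\cup\alpha$ for $\alpha<\kappa$.

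Surjectivity on $\pi_0$ of the comparison map then gives a single $\alpha<\kappa$ and a map $h\colon C\times\prod_\alpha X\to X$ with $g\simeq h\circ(\id_C\times\pr_{<\alpha})$. Restricting along $\iota_\beta\times\id$ shows that every $\pr_\beta$, $\beta<\kappa$, factors through the same map $\pr_{<\alpha}\colon\prod_\kappa X\to\prod_\alpha X$. Distributivity is needed precisely to assemble the $\kappa$ projections into the single map $g$ to which cosumpactness can be applied.

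Now fix any $\beta\ge\alpha$. Under $\prod_\kappa X\simeq\prod_\alpha X\times\prod_{\kappa\setminus\alpha}X$, the map $\pr_\beta$ factors through both projections; through the second trivially, since $\beta\in\kappa\setminus\alpha$. It also has the diagonal $\Delta\colon X\to\prod_\kappa X$ as a section. So Lemma \ref{lem:factor-through-both-proj} with $Y=\prod_\alpha X$, $Z=\prod_{\kappa\setminus\alpha}X$ and $f=\pr_\beta$ shows that $X$ is $(-1)$-truncated.

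The paper packages the same argument as a commutative square. Its right vertical map $\colim_{\alpha}\prod_\kappa\map_\cC(\prod_\alpha X,X)\to\prod_\kappa\map_\cC(\prod_\kappa X,X)$ is an equivalence by 2-out-of-3, and the tuple $(\pr_\beta)_\beta$ must lie in its image.
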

\begin{proof}~
    Let $Y \in \cC$ be $\kappa$-cosumpact, $* \in \cC$ be terminal
    and $C = \coprod_\kappa *$.
    We have a commutative diagram
    \[\begin{tikzcd}[column sep=scriptsize]
        {\colim_{\alpha < \kappa}\map_\cC(C \times \prod_\alpha Y, Y)} & {\colim_{\alpha < \kappa}\map_\cC(\coprod_\kappa \prod_\alpha Y, Y)} & {\colim_{\alpha < \kappa}\prod_\kappa\map_\cC(\prod_\alpha Y,Y)} \\
        {\map_\cC(C \times \prod_\kappa Y, Y)} & {\map_\cC(\coprod_\kappa \prod_\kappa Y,Y)} & {\prod_\kappa\map_\cC(\prod_\kappa Y,Y),}
        \arrow["\simeq", from=1-1, to=1-2]
        \arrow["\simeq"', from=1-1, to=2-1]
        \arrow["\simeq", from=1-2, to=1-3]
        \arrow[from=1-2, to=2-2]
        \arrow[from=1-3, to=2-3]
        \arrow["\simeq", from=2-1, to=2-2]
        \arrow["\simeq", from=2-2, to=2-3]
    \end{tikzcd}\]
    where the two left horizontal equivalences use the assumption on $\cC$,
    and the left vertical equivalence uses $\kappa$-cosumpactness of $Y$.
    By 2-out-of-3, also the right vertical map is an equivalence.
    But the tuple of projections $(\pr_\beta \colon \prod_\kappa Y \to Y)_{\beta< \kappa}$
    is a point in $\prod_\kappa \map_\cC(\prod_\kappa Y,Y)$,
    which now must lie in the image of the right vertical map.
    Hence there exists some $\alpha < \kappa$
    for which every $\pr_\beta$ factors through $\pr_{< \alpha} \colon \prod_\kappa Y \to \prod_\alpha Y$.
    For $\alpha < \beta < \kappa$,
    the map $\pr_\beta \colon \prod_\alpha Y \times \prod_{\kappa \setminus \alpha}Y \simeq \prod_\kappa Y \to Y$
    admits a section (the diagonal $\Delta$) and factors through both projections.
    Hence $Y$ is $(-1)$-truncated by Lemma \ref{lem:factor-through-both-proj}.
 \end{proof}

\subsection{The relation to measurable cardinals}\label{subsec:measurable}

Recall that a cardinal $\kappa$ is measurable if it is uncountable
and admits a $\kappa$-complete non-principal ultrafilter,
see e.g.~\cite[Chapter 4.2]{Chang-Keisler}.
Measurable cardinals are inaccessible,
and thus their existence cannot be proven in ZFC.
Nevertheless, assuming their existence is not too uncommon (ask your local set theorist).
The reason measurable cardinals are relevant for us
is that, analogously to the case of cocompact objects
where we need to construct maps out of cofiltered limits,
to understand cosumpact objects we would like to construct
maps out of (infinite) products.
Of course, we always have the projections,
but in general it is hard to construct any other maps,
which is where ultrafilters come in by means of ultraproducts.
Following \cite[Chapter 4]{Chang-Keisler} or \cite[Section 3]{Chromatic}, we make the following definition.

\begin{definition}
    Let $\cC$ be an $\infty$-category with filtered colimits
    and small products.
    Let $I$ be some set and $\cU$ be an ultrafilter on $I$.
    The \textit{$\cU$-ultraproduct} of an object $X \in \cC$ is defined as
    \[
        (\prod_I X)/\cU \coloneqq \colim_{U \in \cU^\op} \prod_{i \in U}X,
    \]
    where the filtered colimit is taken over the diagram
    which for an inclusion $U \subseteq V$ of $\cU$-large subsets of $I$
    has as transition map the projection
    $\pr^V_U \colon \prod_V X \to \prod_U X.$
    In particular, there is a canonical map
    \[
        q \colon \prod_I X \to (\prod_I X)/\cU
    \]
    and this yields a diagonal map $q\Delta\colon X \to (\prod_I X)/\cU$.
\end{definition}

\begin{remark}\label{rem:set-ultraproduct}
    Classically, in $\cC = \Set$, we can describe the $\cU$-ultraproduct
    of a set $X$ as the quotient of $\prod_I X$ by the equivalence relation
    where $(x_i)_i \sim (y_i)_i$ precisely if $\{i \in I \mid x_i = y_i\} \in \cU$.
\end{remark}

\begin{remark}\label{rmk:proj-fact}
    Essentially by definition, the quotient map $q$
    factors through the projection
    $\pr_U \colon \prod_I X \to \prod_U X$
    whenever $U \subseteq I$ is $\cU$-large.
    In particular, if $\cU$ is the principal ultrafilter at $i \in I$, then $q = \pr_i$.
    In $\Set$, due to the concrete description given in Remark \ref{rem:set-ultraproduct}, we have a partial converse:
    if $|X| \geq 2$ and $q$ factors through $\pr_U$,
    then $U \in \cU$.
\end{remark}

The point of this is that we can now leverage the existence
of non-principal ultrafilters for the quotient maps $q$
to provide us with possibly interesting maps out of products.
For their concreteness, we will be especially interested
in ultraproducts of fields.

\begin{example}\label{ex:specprodfield}
    Let $K$ be a field and $I$ some set.
    Recall that the ultrapower of a field is calculated
    in $\Set$ and is again a field, and that there is a bijection
    \[
        \{\text{ultrafilters on $I$}\} \leftrightarrow \{\text{prime ideals in $\textstyle\prod_I K$}\},
    \]
    where $\cU \mapsto \frp_\cU \coloneqq \Ker(\prod_I K \to (\prod_I K)/\cU)$
    and in the other direction $\frp \mapsto \cU_\frp \coloneqq \{J \subseteq I \mid 1-\mathbbm{1}_J \in \frp\}$.
\end{example}

\begin{lemma}[{{\cite[Proposition 4.2.4]{Chang-Keisler}}}]\label{lem:diag-complete}
    Let $X$ be a set and $\cU$ an ultrafilter on some set $I$.
    The diagonal map $q\Delta \colon X \to (\prod_I X)/\cU$ is bijective if and only if $\cU$ is $|X|^+$-complete, i.e.~$\cU$ is closed under intersections of size $\leq |X|$.\qed
\end{lemma}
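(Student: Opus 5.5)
We work directly with the set-theoretic description of the ultrapower from Remark \ref{rem:set-ultraproduct}: $(\prod_I X)/\cU$ is $\prod_I X$ modulo $(x_i)\sim(y_i)$ iff $\{i \mid x_i=y_i\}\in\cU$. Under this description $q\Delta$ sends $x$ to the class of the constant family $(x)_i$. Injectivity is automatic: if $x\neq y$, then the set $\{i \mid x=y\}$ is empty, and $\emptyset\notin\cU$. So the statement reduces to showing that $q\Delta$ is \emph{surjective} if and only if $\cU$ is closed under intersections of at most $|X|$ many members. (We may assume $X\neq\emptyset$; if $X=\emptyset$ both sides are empty and $\emptyset$-completeness is vacuous.)

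For ``if'': take a family $(x_i)_{i\in I}$ and partition $I$ into the fibres $I_x=\{i \mid x_i=x\}$ for $x\in X$. There are at most $|X|$ of them. Suppose none lies in $\cU$. Then each complement $I\setminus I_x$ lies in $\cU$, since $\cU$ is an ultrafilter. By $|X|^+$-completeness the intersection $\bigcap_x (I\setminus I_x)=\emptyset$ lies in $\cU$, which is a contradiction. Hence some $I_x\in\cU$, and then $(x_i)\sim(x)_i$, so the class is in the image.

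For ``only if'': assume $q\Delta$ is surjective and let $(U_\alpha)_{\alpha<\mu}$ be members of $\cU$ with $\mu\le|X|$. We want $\bigcap_\alpha U_\alpha\in\cU$. By a standard reduction we may assume the family is decreasing, or we can directly consider the disjoint pieces $V_\alpha=(I\setminus U_\alpha)\setminus\bigcup_{\beta<\alpha}(I\setminus U_\beta)$ together with $V_\mu=\bigcap_\alpha U_\alpha$. These form a partition of $I$ into at most $\mu+1$ pieces. Since $|X|$ is infinite in the relevant case, or more carefully since $\mu+1\le|X|$, we can choose an injection $\mu+1\hookrightarrow X$. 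If $|X|$ is finite, completeness is automatic for finite intersections, so only the infinite case needs the injection.

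Define $x_i$ to be the image of $\alpha$ for $i\in V_\alpha$. By surjectivity, $(x_i)$ is equivalent to some constant family $(x)_i$, so some fibre lies in $\cU$. By injectivity of the labelling, that fibre is one of the $V_\alpha$. But $V_\alpha\subseteq I\setminus U_\alpha\notin\cU$ for $\alpha<\mu$, so the fibre must be $V_\mu$. Hence $V_\mu=\bigcap_\alpha U_\alpha\in\cU$.

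The only mildly delicate point is this counting: we need room in $X$ to label $\mu+1$ pieces. This is handled by splitting into the finite case, where the ultrafilter axiom suffices, and the infinite case, where $\mu+1\le|X|$.
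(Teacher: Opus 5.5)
Your proof is correct. Injectivity is automatic, and you correctly show that surjectivity amounts to every partition of $I$ into at most $|X|$ pieces having a $\cU$-large piece, which is equivalent to $|X|^+$-completeness; the disjointification $V_\alpha$ and the finite/infinite bookkeeping are handled properly. The paper gives no argument of its own and only cites Chang--Keisler, Proposition 4.2.4, which proves the statement by this standard partition-and-labelling argument.
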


\begin{corollary}\label{cor:diag-equivalence}
    Let $\kappa$ be an uncountable regular cardinal.
    If $X \in \An^\kappa$
    and $\cU$ is a $\kappa$-complete non-principal ultrafilter on a set $I$,
    then the diagonal $X \to (\prod_{I}X)/\cU = \colim_{U \in \cU^{\op}} \prod_U X$ is an equivalence.
\end{corollary}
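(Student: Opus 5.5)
We reduce to the set-level statement, Lemma \ref{lem:diag-complete}, by working one homotopy group at a time. Since the ultraproduct is a filtered colimit of products, and in $\An$ both products and filtered colimits commute with $\pi_0$ and with $\pi_n$ at any chosen basepoint, a map of anima that induces bijections on all homotopy sets is an equivalence by Whitehead's theorem. So the task is to compute the homotopy sets of $(\prod_I X)/\cU$ and compare them with those of $X$.

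\emph{Step 1: $\pi_0$.} We have
\[
\pi_0\bigl((\textstyle\prod_I X)/\cU\bigr) \cong \colim_{U \in \cU^\op} \textstyle\prod_U \pi_0 X = (\textstyle\prod_I \pi_0X)/\cU,
\]
which is the set-theoretic ultraproduct of Remark \ref{rem:set-ultraproduct}. Since $X$ is $\kappa$-compact in $\An$, each of the sets $\pi_0 X$ and $\pi_n(X,x)$ has cardinality $<\kappa$. This is standard: a $\kappa$-compact anima is a retract of a $\kappa$-small colimit of points, and for $\kappa$ uncountable regular, $\kappa$-small CW complexes have homotopy groups of size $<\kappa$. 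Now $\cU$ is $\kappa$-complete and $|\pi_0X|<\kappa$ gives $|\pi_0X|^+\le\kappa$, so $\cU$ is $|\pi_0X|^+$-complete. Lemma \ref{lem:diag-complete} therefore shows that $\pi_0X \to (\prod_I\pi_0X)/\cU$ is bijective. Non-principality plays no role here.

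\emph{Step 2: higher homotopy groups.} By Step 1, every point of the ultraproduct lies in the image of the diagonal. We may therefore take the basepoint to be the image of some $x\in X$, and we may take its representative at each stage $U$ to be the constant tuple. Then
\[
\pi_n\bigl((\textstyle\prod_I X)/\cU, q\Delta x\bigr) \cong \colim_U \textstyle\prod_U \pi_n(X,x) = (\textstyle\prod_I \pi_n(X,x))/\cU.
\]
Applying Lemma \ref{lem:diag-complete} again, now to the set $\pi_n(X,x)$ of size $<\kappa$, shows that the diagonal induces a bijection on $\pi_n$.

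\emph{Main obstacle.} The bound $|\pi_n(X,x)|<\kappa$ for $\kappa$-compact $X$ is the one input not already in the paper, and it is where regularity and uncountability of $\kappa$ enter. I expect this to be the main point requiring care. I would cite \cite{HTT} (for instance the characterization of $\An^\kappa$ via $\kappa$-small simplicial sets for uncountable $\kappa$), noting that a $\kappa$-small simplicial set has fewer than $\kappa$ simplices and hence homotopy groups of size $<\kappa$. Closure of this bound under retracts is immediate.
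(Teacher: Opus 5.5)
Your proposal is correct and takes the same route as the paper: bound $\pi_0X$ and all $\pi_n(X,x)$ by $\kappa$ (this is where uncountability enters), commute homotopy groups past the products and the filtered colimit, and apply Lemma \ref{lem:diag-complete} to each homotopy set. You add more detail than the paper on two points: you choose basepoints in the image of the diagonal, and you note that only the forward direction of the characterization of $\An^\kappa$ is needed.
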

\begin{proof}
    Since $\kappa$ is uncountable,
    an anima $X$ is $\kappa$-compact if and only if $\pi_0X$
    and all its homotopy groups at all base points are $\kappa$-small.
    As homotopy groups commute with filtered colimits and products,
    we can reduce to the analogous statement for $X \in \Set^\kappa$,
    i.e.~to Lemma \ref{lem:diag-complete}.
\end{proof}

\begin{lemma}[{{\cite[Proposition 4.2.7]{Chang-Keisler}}}]
    \label{lem:smallest-meas}
    Let $\kappa$ be an uncountable regular cardinal.
    If there is a set $I$ admitting a $\kappa$-complete
    non-principal ultrafilter,
    then there exists a measurable cardinal $\mu \geq \kappa$.\qed
\end{lemma}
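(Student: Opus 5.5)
The plan is to extract from the given ultrafilter a cardinal that measures its completeness exactly, and then push the ultrafilter forward onto that cardinal. Let $\cU$ be a $\kappa$-complete non-principal ultrafilter on $I$. I define $\mu$ to be the least cardinal $\lambda$ such that there is a family $(A_\alpha)_{\alpha<\lambda}$ of members of $\cU$ whose intersection does not lie in $\cU$.

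First, $\mu$ is well defined. Since $\cU$ is non-principal, every cofinite-by-one set $I\setminus\{i\}$ lies in $\cU$. The intersection of all of these is empty, hence not in $\cU$, so some such $\lambda\le|I|$ exists. By $\kappa$-completeness, intersections of fewer than $\kappa$ members of $\cU$ stay in $\cU$, so $\mu\ge\kappa$; in particular $\mu$ is uncountable. By minimality of $\mu$, the ultrafilter $\cU$ is $\mu$-complete.

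Next, I turn a witnessing family $(A_\alpha)_{\alpha<\mu}$ into a partition. Set $A_\infty\coloneqq\bigcap_{\alpha<\mu}A_\alpha\notin\cU$, so $I\setminus A_\infty\in\cU$. For $\alpha<\mu$, put $X_\alpha\coloneqq\bigl(\bigcap_{\beta<\alpha}A_\beta\bigr)\setminus A_\alpha$. These sets are pairwise disjoint and cover $I\setminus A_\infty$: each $x$ outside $A_\infty$ lies in $X_\alpha$ for the least $\alpha$ with $x\notin A_\alpha$. Moreover $X_\alpha\subseteq I\setminus A_\alpha$, so $X_\alpha\notin\cU$.

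Define $f\colon I\setminus A_\infty\to\mu$ by sending $x$ to the unique $\alpha$ with $x\in X_\alpha$. Let
\[
\cD\coloneqq\{S\subseteq\mu \mid f^{-1}(S)\in\cU\}.
\]
I then check three things, each of which is routine.
\begin{enumerate}
\item $\cD$ is an ultrafilter on $\mu$: preimage commutes with complements and intersections, and $f^{-1}(\mu)=I\setminus A_\infty\in\cU$.
\item $\cD$ is $\mu$-complete: preimage commutes with intersections of fewer than $\mu$ sets, and $\cU$ is $\mu$-complete.
\item $\cD$ is non-principal: $f^{-1}(\{\alpha\})=X_\alpha\notin\cU$ for every $\alpha<\mu$.
\end{enumerate}
Hence $\mu$ is an uncountable cardinal carrying a $\mu$-complete non-principal ultrafilter, i.e.\ $\mu$ is measurable, and $\mu\ge\kappa$.

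The only delicate point is the partition step. One must make sure that the leftover piece $A_\infty$ is removed, so that the fibres of $f$ all lie outside $\cU$ while its domain still lies in $\cU$. Once $\mu$ is chosen minimal, this works out as described.
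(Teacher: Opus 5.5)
Your proof is correct and is the standard argument behind the cited result in Chang--Keisler. You take the completeness number $\mu \geq \kappa$ of $\cU$ and push $\cU$ forward along a partition of a $\cU$-large set into $\mu$ pieces, none of them $\cU$-large, to obtain a $\mu$-complete non-principal ultrafilter on $\mu$. The paper gives no proof of its own here and only cites this source.
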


Recall that we denote by $\CAlg(K) = \CRing_{K/}$ the ordinary category
of $K$-algebras.

\begin{lemma}\label{lemma:fieldprod}
	Let $K$ be a field. There exists a morphism of $K$-algebras $\prod_IK\to K$ which does not factor through a projection $\pr_i$ if and only if $I$ admits a non-principal $|K|^+$-complete ultrafilter.
\end{lemma}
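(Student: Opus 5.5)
We will prove the two implications separately. Throughout, a $K$-algebra map $\varphi\colon \prod_I K \to K$ is surjective, since it is $K$-linear and unital, so its kernel $\frp = \Ker\varphi$ is a maximal, in particular prime, ideal of $\prod_I K$. By Example \ref{ex:specprodfield}, $\frp = \frp_\cU$ for the ultrafilter $\cU = \cU_\frp$. Hence $\varphi$ factors as $\prod_I K \xto{q} (\prod_I K)/\cU \xto{\bar\varphi} K$, where $\bar\varphi$ is an injective map of fields. Composing with the diagonal $\Delta\colon K \to \prod_I K$ (the $K$-algebra structure map) gives $\bar\varphi\circ q\Delta = \id_K$, because $\varphi$ is a $K$-algebra map.

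\emph{($\Rightarrow$).} Suppose $\varphi$ does not factor through any projection $\pr_i$. If $\cU$ were principal at some $i$, then $q = \pr_i$ by Remark \ref{rmk:proj-fact}, and $\varphi$ would factor through $\pr_i$. So $\cU$ is non-principal. For completeness, note first that $\bar\varphi$ is injective and $\bar\varphi\circ q\Delta = \id_K$. The plan is to show that $q\Delta\colon K \to (\prod_I K)/\cU$ is surjective, and for this we want $\bar\varphi$ to be an isomorphism.

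This is the main obstacle: a priori the ultrapower could be a strictly larger field which nevertheless embeds into $K$ over $K$. I would argue elementwise. Take $x = (x_i)_i \in \prod_I K$ and set $c = \varphi(x) \in K$. Then $x - \Delta(c) \in \Ker\varphi = \frp_\cU$, so $x \equiv \Delta(c)$ modulo $\cU$. This gives surjectivity of $q\Delta$ directly, without passing through $\bar\varphi$. Hence $q\Delta$ is bijective, and Lemma \ref{lem:diag-complete} shows that $\cU$ is $|K|^+$-complete.

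\emph{($\Leftarrow$).} Let $\cU$ be a non-principal $|K|^+$-complete ultrafilter on $I$. By Lemma \ref{lem:diag-complete}, $q\Delta\colon K \to (\prod_I K)/\cU$ is a bijection. The ultrapower is computed in $\Set$ and is a ring quotient, so $q\Delta$ is a ring isomorphism and hence an isomorphism of $K$-algebras. Define $\varphi \coloneqq (q\Delta)^{-1}\circ q$, which is a $K$-algebra map $\prod_I K \to K$.

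Suppose $\varphi$ factored as $\psi\circ\pr_i$. Then its kernel would contain $\Ker\pr_i$, which is the maximal ideal $\frp_{\cU_i}$ of the principal ultrafilter $\cU_i$ at $i$. Since $\Ker\varphi = \frp_\cU$ and maximal ideals can only contain one another if they are equal, we get $\frp_\cU = \frp_{\cU_i}$. The bijection of Example \ref{ex:specprodfield} then gives $\cU = \cU_i$, which contradicts non-principality. Alternatively, one can use the converse in Remark \ref{rmk:proj-fact}: this requires $|K| \ge 2$, and $\{i\} \notin \cU$.

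The only delicate point overall is the observation $x \equiv \Delta\varphi(x)$, which circumvents the worry in the ($\Rightarrow$) direction.
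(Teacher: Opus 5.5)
Your proof is correct and is essentially the paper's argument with the details filled in: the kernel of a $K$-algebra map $\prod_I K \to K$ gives an ultrafilter via Example \ref{ex:specprodfield}, Lemma \ref{lem:diag-complete} matches bijectivity of $q\Delta$ with $|K|^+$-completeness, and Remark \ref{rmk:proj-fact} matches factoring through a projection with principality. Your maximal-ideal argument in the ($\Leftarrow$) direction is an equivalent variant of that last step. The ``main obstacle'' you flag is not really one: since $\varphi = \bar\varphi\circ q$ is surjective, $\bar\varphi$ is already surjective and hence an isomorphism, which is the observation the paper compresses into ``surjective with the diagonal $\Delta$ as a section.''
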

\begin{proof}
    Note first that a $K$-algebra map $f \colon \prod_I K \to K$
    is necessarily surjective, with the diagonal $\Delta \colon K \to \prod_I K$ as a section.
    It follows by Example \ref{ex:specprodfield} and Lemma \ref{lem:diag-complete}
    that the datum of such a map $f$ is equivalent to the datum of a $|K|^+$-complete ultrafilter $\cU$ on $I$.
    Then, Remark \ref{rmk:proj-fact} and the observation
    that $|K| \geq 2$ imply that $f$ factors through a projection
    if and only if $\cU$ is principal.
%
\end{proof}

\begin{proposition}\label{prop:field-measurable}
    Let $K$ be an infinite field. If $K \in \CAlg(K)$ is not cosumpact,
    then there exists a measurable cardinal $\mu \geq |K|^+$.
\end{proposition}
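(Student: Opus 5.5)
Suppose $K \in \CAlg(K)$ is not cosumpact, and write $\kappa = |K|^+$. The plan is to produce a set $I$ carrying a non-principal $\kappa$-complete ultrafilter; Lemma \ref{lem:smallest-meas} then supplies a measurable cardinal $\mu \geq \kappa = |K|^+$. Note that $\kappa$ is uncountable because $K$ is infinite, so the lemma applies.

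\textbf{Step 1: extract a map out of a product.} Failure of cosumpactness gives a set $I$, a filtered union $I = \bigcup_{j\in J} I_j$, and $K$-algebras $A_i$ ($i \in I$) such that
\[
\colim_{j\in J} \Hom(\prod_{I_j} A_i, K) \to \Hom(\prod_I A_i, K)
\]
is not bijective. Everything here is $1$-categorical, so only sets are involved. I would check injectivity directly. Each projection $\prod_I A_i \to \prod_{I_j} A_i$ is surjective, since every $A_i$ is nonzero; the case of some $A_i = 0$ needs a small separate check. Hence precomposition is injective, and the colimit is filtered, so the map is injective. Failure must therefore be a failure of surjectivity: there is a $K$-algebra map $f \colon \prod_I A_i \to K$ that factors through no $\prod_{I_j} A_i$.

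\textbf{Step 2: reduce to $A_i = K$.} I want to replace the $A_i$ by $K$ so that Lemma \ref{lemma:fieldprod} applies. The kernel $\frp$ of $f$ is a prime ideal of $\prod_I A_i$. Pulling back along the idempotents $\mathbbm{1}_U$ defines an ultrafilter
\[
\cU = \{U \subseteq I \mid 1 - \mathbbm{1}_U \in \frp\},
\]
and $f$ factors through $\prod_U A_i$ for every $U \in \cU$, because $1 - \mathbbm{1}_U$ generates the kernel of that projection. If $\cU$ contained some $I_j$, then $f$ would factor through $\prod_{I_j} A_i$, which Step 1 rules out.

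So it remains to show that $\cU$ is $|K|^+$-complete and non-principal. Non-principality follows from the previous paragraph: a principal $\cU$ at $i$ contains $\{i\}$, hence contains every $I_j$ with $i \in I_j$, and such a $j$ exists because the $I_j$ cover $I$.

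\textbf{Step 3: completeness, the main obstacle.} Take a partition of $I$ into at most $|K|$ pieces $U_k$, none of which lies in $\cU$. I would choose distinct scalars $c_k \in K$ and form the element $x = (c_{k(i)})_i \in \prod_I A_i$, where $k(i)$ is the index of the piece containing $i$. Then $f(x) = c$ for some $c \in K$. The element $x - c$ is invertible off the piece where $c_k = c$ (if there is such a piece), and is zero on that piece. Since $f(x - c) = 0$ and $f$ sends invertible elements to nonzero elements, this forces the piece with $c_k = c$ to lie in $\cU$. If no $c_k$ equals $c$, then $x - c$ is a unit, contradicting $f(x-c) = 0$. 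This mimics the proof of Lemma \ref{lem:diag-complete}. Completeness for partitions is equivalent to closure of $\cU$ under intersections of size $\leq |K|$, so $\cU$ is a non-principal $|K|^+$-complete ultrafilter on $I$, and Lemma \ref{lem:smallest-meas} concludes.

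I expect Step 3 to be the delicate point. The main thing to verify is that $K$-linearity of $f$, with $f$ fixing scalars, is exactly what the argument uses. Alternatively, one could reduce to $A_i = K$ by composing with residue maps and then invoke Lemma \ref{lemma:fieldprod}, but the direct argument above seems cleaner.
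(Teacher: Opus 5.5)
Your proof is correct, and its outline matches the paper's. Injectivity of the comparison map yields a $K$-algebra map $f$ out of $\prod_I A_i$ that factors through no $\prod_{I_j} A_i$. This produces a non-principal $|K|^+$-complete ultrafilter on $I$, and Lemma \ref{lem:smallest-meas} finishes.

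The difference is in the middle step. The paper first reduces to $A_i = K$ by precomposing $f$ with $u\colon \prod_I K \to \prod_I A_i$, the product of the structure maps $K \to A_i$. (In your closing aside, "residue maps" should be these structure maps; residue maps go the wrong way.) It notes that factoring through $\pr_{I_0}$ is detected by $f(\mathbbm{1}_{I_0}) = 1$, so the reduction loses nothing. It then cites Lemma \ref{lemma:fieldprod}, the correspondence between $K$-algebra maps $\prod_I K \to K$ and $|K|^+$-complete ultrafilters, which rests on the Chang--Keisler Lemma \ref{lem:diag-complete}.

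You instead read the ultrafilter off the idempotents in $\ker f$ and prove completeness by hand with the distinct-scalars partition argument. Since $u(\mathbbm{1}_U) = \mathbbm{1}_U$, this is the same ultrafilter the paper obtains from $f \circ u$. Your Step 3 is exactly the direction of Lemma \ref{lem:diag-complete} that is needed, inlined and run directly in $\prod_I A_i$. Your version is self-contained and needs no reduction step. The paper's is shorter, and packaging the argument as Lemma \ref{lemma:fieldprod} also lets it reuse that lemma in Lemma \ref{lem:converse}.

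One small simplification: the projections $\prod_I A_i \to \prod_{I_j} A_i$ are surjective even when some $A_i = 0$, since you can extend by zero. So no separate check is needed in Step 1.
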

\begin{proof}
    Since the comparison map in Definition \ref{def:cosumpact}
    is injective, the assumption implies that it fails to be surjective.
    Thus there exists a set $I$ together with $K$-algebras $R_i,\, i \in I$
    and a map $f \colon \prod_{i \in I} R_i \to K$ which does not factor through any projection $\pr_{I_0}$
    onto a finite subset $I_0 \subseteq I$.
    Note that $f$ factors through $\pr_{I_0}$
    if and only if $f(\unit_{I_0}) = 1$, where $\unit_{I_0} \in \prod_{i\in I} R_i$ is the indicator function of $I_0$.
    This condition only depends on the composite $\prod_I K \xto{u} \prod_{i \in I} R_i \to K$,
    so we can reduce to the case $R_i = K$ for all $i \in I$.
    Therefore, by Lemma \ref{lemma:fieldprod}, $I$ admits a non-principal $|K|^+$-complete ultrafilter,
    and we conclude via Lemma \ref{lem:smallest-meas}.
\end{proof}

\begin{remark}
    This proof only ever used that $f$ did not factor through a projection
    onto a single factor, but this is equivalent to the assumption on $f$:
    any $K$-algebra map from a finite product to a field extension of $K$
    automatically factors through a projection onto a single factor.
\end{remark}

\begin{lemma}\label{lem:converse}
    Let $\kappa$ be a regular cardinal and $K$ a field.
    If $K \in \CAlg(K)$ is $\kappa$-cosumpact, then there exists no measurable cardinal of size $\geq \max(\kappa,|K|^+)$.
\end{lemma}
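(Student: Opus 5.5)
We argue by contraposition. Suppose $\mu \geq \max(\kappa,|K|^+)$ is a measurable cardinal, and fix a $\mu$-complete non-principal ultrafilter $\cU$ on $I = \mu$. We aim to exhibit a $K$-algebra map $\prod_I K \to K$ that does not factor through any projection $\pr_{I_0}$ with $I_0 \subseteq I$ of size $<\kappa$. This contradicts $\kappa$-cosumpactness of $K$: write $I$ as the $\kappa$-filtered union of its $\kappa$-small subsets $I_j$ and take $Y_i = K$. Cosumpactness then says every map $\prod_I K \to K$ factors through $\pr_{I_j}\colon \prod_I K \to \prod_{I_j} K$ for some $j$.

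\textbf{Constructing the map.} Since $\mu \geq |K|^+$, the ultrafilter $\cU$ is in particular $|K|^+$-complete. By Lemma \ref{lem:diag-complete}, the diagonal $K \to (\prod_I K)/\cU$ is then a bijection. It is a ring map because the ultrapower is computed in $\Set$ with componentwise operations, as in Example \ref{ex:specprodfield}. Composing the quotient $q$ with the inverse of this bijection gives a $K$-algebra map
\[
f \colon \prod_I K \to K.
\]

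\textbf{Showing $f$ does not factor through small projections.} Let $I_0 \subseteq I$ have $|I_0| < \kappa \le \mu$. As in the proof of Proposition \ref{prop:field-measurable}, $f$ factors through $\pr_{I_0}$ if and only if $f(\unit_{I_0}) = 1$. Here $\unit_{I_0}$ is an idempotent whose complement $1-\unit_{I_0}$ lies in the kernel of $\pr_{I_0}$.

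We compute $f(\unit_{I_0})$ directly. The element $\unit_{I_0}$ agrees with the constant $1$ exactly on $I_0$ and with the constant $0$ exactly on $I\setminus I_0$. Hence $f(\unit_{I_0}) = 1$ if and only if $I_0 \in \cU$.

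It remains to rule out $I_0 \in \cU$. Non-principality gives $I \setminus \{i\} \in \cU$ for every $i$. By $\mu$-completeness, the intersection of these sets over $i \in I_0$ lies in $\cU$, since $|I_0| < \mu$. That intersection is $I \setminus I_0$, so $I \setminus I_0 \in \cU$ and therefore $I_0 \notin \cU$. Thus $f$ factors through no $\kappa$-small projection, which is the desired contradiction.

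\textbf{Remaining check.} The only delicate point is the translation between cosumpactness and factoring through $\pr_{I_j}$. The comparison map of Definition \ref{def:cosumpact} sends a map $\prod_{I_j} K \to K$ to its precomposite with $\pr_{I_j}$. So surjectivity of the comparison map on $\pi_0$, for this particular union $I = \bigcup_j I_j$, is exactly the factorization statement used above. Everything happens in an ordinary category, so mapping spaces are sets and no coherence issues arise.
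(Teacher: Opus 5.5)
Your proof is correct and follows essentially the same route as the paper: you build $f = (q\Delta)^{-1} q \colon \prod_\mu K \to K$ from a $\mu$-complete non-principal ultrafilter $\cU$, use $\kappa$-cosumpactness to factor it through some $\kappa$-small $\pr_{I_0}$, and derive a contradiction between $I_0 \in \cU$ and $\mu$-completeness plus non-principality. The only difference is cosmetic: you check directly via the idempotent $\unit_{I_0}$ that such a factorization forces $I_0 \in \cU$, whereas the paper cites Lemma \ref{lemma:fieldprod} and Remark \ref{rmk:proj-fact} for this step.
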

\begin{proof}
	Suppose for the sake of contradiction that $\mu \geq \max(\kappa, |K|^+)$ is measurable, and let $\cU$ be a non-principal $\mu$- and hence $|K|^+$-complete ultrafilter on $\mu$.
    By Lemma \ref{lemma:fieldprod}, we get a map of $K$-algebras $f \colon \prod_\mu K\to \prod_\mu K/\cU \cong K$ that does not factor through a single projection.
    As $K$ is $\kappa$-cosumpact, $f$ must factor through a projection onto some $\kappa$-small subset
    $J \subseteq \mu$. By Remark \ref{rmk:proj-fact}, this means that $J \in \cU$.
  Since $\cU$ is $\mu$-complete and $|J| < \kappa \leq \mu$, it follows that $\cU$ is principal, a contradiction.
\end{proof}

\begin{corollary}
    For an infinite field $K$, the following are equivalent:
    \begin{enumerate}
        \item $K$ is cosumpact in $\CAlg(K)$.
        \item There does not exist a measurable cardinal $\mu \geq |K|^+$.\qed
    \end{enumerate}
\end{corollary}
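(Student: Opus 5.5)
This is a direct combination of Proposition \ref{prop:field-measurable} and Lemma \ref{lem:converse}, with $\kappa = \aleph_0$, so the plan is to prove the two implications separately.

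For (2) $\Rightarrow$ (1), I argue by contraposition. Suppose $K$ is not cosumpact in $\CAlg(K)$. Since $K$ is infinite, Proposition \ref{prop:field-measurable} applies directly and yields a measurable cardinal $\mu \geq |K|^+$. This contradicts (2).

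For (1) $\Rightarrow$ (2), suppose $K$ is cosumpact, i.e.~$\aleph_0$-cosumpact. Lemma \ref{lem:converse} with $\kappa = \aleph_0$ then says there is no measurable cardinal of size at least $\max(\aleph_0, |K|^+)$. Since $K$ is infinite, $|K|^+ > \aleph_0$, so this maximum equals $|K|^+$, which is exactly statement (2).

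Both directions are immediate, and there is no real obstacle. The only point to check is the bookkeeping of the bound $\max(\kappa,|K|^+) = |K|^+$ when $\kappa = \aleph_0$. Infiniteness of $K$ is needed only for Proposition \ref{prop:field-measurable}, and it also makes this maximum computation trivial.
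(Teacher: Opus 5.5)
Your proposal is correct and is exactly the paper's argument. The corollary carries a \qed because it is the combination of Proposition \ref{prop:field-measurable}, used contrapositively for (2) $\Rightarrow$ (1), with Lemma \ref{lem:converse} at $\kappa = \aleph_0$ for (1) $\Rightarrow$ (2), and your bookkeeping $\max(\aleph_0,|K|^+) = |K|^+$ is right.
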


To complete the proof of Theorem \ref{introthm:measurable},
we still need the following lemma.

\begin{lemma}\label{lem:measurable}
    Let $\kappa,\lambda$ be regular cardinals.
    Suppose that $\cC$ is a $\lambda$-presentable $\infty$-category
    and $X \in \cC$ is $\kappa$-cosumpact.
    If there exists a measurable cardinal $\mu \ge \max(\kappa,\lambda)$
    such that $\map(Y,X) \in \An^\mu$ for all $Y \in \cC^\lambda$, then $X$ is $(-1)$-truncated.
\end{lemma}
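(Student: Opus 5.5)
Fix a measurable $\mu \ge \max(\kappa,\lambda)$ as in the hypothesis and a $\mu$-complete non-principal ultrafilter $\cU$ on $\mu$. We construct a map out of a product which factors through two complementary projections; this will force $X$ to be $(-1)$-truncated via Lemma \ref{lem:factor-through-both-proj}. The candidate is the ultraproduct quotient $q \colon \prod_\mu X \to (\prod_\mu X)/\cU$, and the key claim is that the diagonal
\[
    q\Delta \colon X \to (\textstyle\prod_\mu X)/\cU = \colim_{U \in \cU^\op} \textstyle\prod_U X
\]
is an equivalence in $\cC$. Granting this, set $r \coloneqq (q\Delta)^{-1} q \colon \prod_\mu X \to X$; then $r\Delta \simeq \id_X$, so $r$ admits a section.

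To prove the claim, we test against $\lambda$-compact objects. Since $\cC$ is $\lambda$-presentable and $\lambda \le \mu$, every $Y \in \cC^\lambda$ is $\mu$-compact, and the colimit over $\cU^\op$ is $\mu$-filtered because $\cU$ is $\mu$-complete. Hence
\[
    \map(Y, (\textstyle\prod_\mu X)/\cU) \simeq \colim_{U \in \cU^\op} \textstyle\prod_U \map(Y,X) = (\textstyle\prod_\mu \map(Y,X))/\cU
\]
in $\An$. Under this identification, $q\Delta$ corresponds to the diagonal of $\map(Y,X)$, which is an equivalence by Corollary \ref{cor:diag-equivalence}, since $\map(Y,X) \in \An^\mu$ and $\mu$ is uncountable regular. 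As the $\lambda$-compact objects jointly detect equivalences in a $\lambda$-presentable category, $q\Delta$ is an equivalence. The bookkeeping here, namely checking $\mu$-filteredness of $\cU^\op$ and that the identification is compatible with the diagonal, is where I expect most of the care to be needed, though no real obstacle.

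Next, we use $\kappa$-cosumpactness. Writing $\mu$ as the $\kappa$-filtered union of its $\kappa$-small subsets, cosumpactness gives a $\kappa$-small $J \subseteq \mu$ such that $r$ factors through $\pr_J \colon \prod_\mu X \to \prod_J X$. On the other hand, by Remark \ref{rmk:proj-fact}, $q$ (and hence $r$) factors through $\pr_U$ for every $U \in \cU$. Since $|J| < \kappa \le \mu$ and $\cU$ is $\mu$-complete and non-principal, the set $U \coloneqq \mu \setminus J$ lies in $\cU$. Writing $\prod_\mu X \simeq \prod_J X \times \prod_U X$, the map $r$ admits a section and factors through both projections. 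Lemma \ref{lem:factor-through-both-proj} then shows that $X$ is $(-1)$-truncated. One edge case is $J = \emptyset$, where $\prod_J X$ is terminal; the lemma still applies verbatim with $Y = *$.
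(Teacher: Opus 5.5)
Your proposal is correct and follows essentially the same route as the paper. You show $q\Delta$ is an equivalence by testing against $\lambda$-compact objects and applying Corollary \ref{cor:diag-equivalence}. You then use $\kappa$-cosumpactness to factor $(q\Delta)^{-1}q$ through a $\kappa$-small $J \notin \cU$, so that it also factors through the projection onto the complementary set $\mu\setminus J \in \cU$, and conclude with Lemma \ref{lem:factor-through-both-proj}.

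Your extra remarks (the $\mu$-filteredness of $\cU^\op$, conservativity of the $\lambda$-compact objects, and the edge case $J=\emptyset$) only spell out details the paper leaves implicit.
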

\begin{proof}
    Let $\cU$ be a $\mu$-complete non-principal ultrafilter on $\mu$.
    In particular, $\cU^\op$ is $\mu$-filtered.
    By testing on $\map(Y,-)$ for all $Y \in \cC^\lambda$,
    we deduce from Corollary \ref{cor:diag-equivalence}
    that the diagonal map $q\Delta \colon X \to (\prod_\mu X)/\cU$
    is an equivalence.
    By $\kappa$-cosumpactness of $X$, there is a $\kappa$-small
    subset $I \subseteq \mu$ and a factorization
    \[\begin{tikzcd}
        {\prod_\mu X} & {(\prod_\mu X)/\cU} \\
        {\prod_{I} X} & {X,}
        \arrow["q", from=1-1, to=1-2]
        \arrow["\pr"', from=1-1, to=2-1]
        \arrow["f"{description}, from=1-1, to=2-2]
        \arrow["\simeq"', from=1-2, to=2-2]
        \arrow["{(q\Delta)^{-1}}", draw=none, from=1-2, to=2-2]
        \arrow[dashed, from=2-1, to=2-2]
    \end{tikzcd}\]
    where $f$ is defined as the composite.
    But since $I$ is $\kappa$- and hence $\mu$-small,
    the fact that $\cU$ is $\mu$-complete and non-principal
    shows that $I \notin \cU$ and, since $\cU$ is an ultrafilter,
    this yields that $\mu \setminus I \in \cU$.
    Thus $q$ and hence $f$ factor through $\prod_\mu X \to \prod_{\mu \setminus I}X$.
    In total, $f\Delta \simeq \id_X$ and $f$ factors through two complementary projections,
    so Lemma \ref{lem:factor-through-both-proj} implies that $X$ is $(-1)$-truncated.
\end{proof}

\begin{theorem}\label{thm:meas-card-eqv}
    The following statements are equivalent:
    \begin{enumerate}
        \item There exists a proper class of measurable cardinals.

        \item (Strengthened Theorem \ref{introthm:main})
            For any presentable $\infty$-category $\cC$ and regular cardinal $\kappa$,
            every $\kappa$-cosumpact object in $\cC$ is $(-1)$-truncated.

        \item For any presentable $\infty$-category $\cC$,
            every cosumpact object in $\cC$ is $(-1)$-truncated.

        \item For any infinite field $K$, the object $K \in \CAlg(K)$ is not cosumpact.
    \end{enumerate}
\end{theorem}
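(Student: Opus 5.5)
We prove the cycle (1) $\Rightarrow$ (2) $\Rightarrow$ (3) $\Rightarrow$ (4) $\Rightarrow$ (1).

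\emph{(1) $\Rightarrow$ (2).} Let $\cC$ be presentable, $\kappa$ regular and $X \in \cC$ a $\kappa$-cosumpact object. Choose a regular cardinal $\lambda$ such that $\cC$ is $\lambda$-presentable. The subcategory $\cC^\lambda$ is essentially small, so there is a cardinal $\nu$ with $\map(Y,X) \in \An^\nu$ for all $Y \in \cC^\lambda$. This uses only that a set-indexed family of anima is $\nu$-compact for $\nu$ large. By (1) there is a measurable cardinal $\mu \geq \max(\kappa,\lambda,\nu)$. Since $\An^\nu \subseteq \An^\mu$, Lemma \ref{lem:measurable} applies and shows that $X$ is $(-1)$-truncated.

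\emph{(2) $\Rightarrow$ (3).} Take $\kappa = \aleph_0$.

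\emph{(3) $\Rightarrow$ (4).} Let $K$ be an infinite field. The category $\CAlg(K)$ is presentable, so we must show that $K$ is not $(-1)$-truncated there. It suffices to exhibit an object with two distinct maps to $K$. For example, $K[x]$ admits the maps $x \mapsto 0$ and $x \mapsto 1$. Hence $K$ is not subterminal, and by (3) it is not cosumpact.

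\emph{(4) $\Rightarrow$ (1).} Given any cardinal $\theta$, choose an infinite field $K$ with $|K| \geq \theta$, for instance $K = \Q(t_s : s \in S)$ with $|S| = \theta$. By (4), $K$ is not cosumpact in $\CAlg(K)$. Proposition \ref{prop:field-measurable} then gives a measurable cardinal $\mu \geq |K|^+ > \theta$. Since $\theta$ was arbitrary, the measurable cardinals are unbounded, i.e.\ they form a proper class.

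The only step needing care is the size bookkeeping in (1) $\Rightarrow$ (2). The required input is that each $\map(Y,X)$ is essentially small, hence $\nu$-compact for $\nu$ large. Essential smallness of $\cC^\lambda$ then lets us choose one $\nu$ uniform over all $Y \in \cC^\lambda$, after which we pick $\mu$ above $\nu$. Everything else is a direct citation of earlier results.
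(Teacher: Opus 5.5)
Your proof is correct and follows the paper's argument. For (1) $\Rightarrow$ (2) you choose $\lambda$, then a uniform bound on $\map(Y,X)$ for $Y \in \cC^\lambda$, then a measurable $\mu$ above everything, and apply Lemma \ref{lem:measurable}; (3) $\Rightarrow$ (4) holds because $K$ is not subterminal in $\CAlg(K)$, and (4) $\Rightarrow$ (1) comes from Proposition \ref{prop:field-measurable} applied to arbitrarily large infinite fields, which just makes explicit what the paper leaves implicit (take $\nu$ regular so that $\An^\nu \subseteq \An^\mu$ is meaningful).
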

\begin{proof}
    Note that if $K$ is a field, then $K$ is never $(-1)$-truncated
    in $\CAlg(K)$, since $(-1)$-truncated objects are subobjects of the terminal one.
    Hence (2) $\Rightarrow$ (3) $\Rightarrow$ (4) is clear.
    Now (4) $\Rightarrow$ (1) by Proposition \ref{prop:field-measurable},
    so it remains to argue that (1) $\Rightarrow$ (2).

    So suppose that there is a proper class of measurable cardinals.
    Let $X \in \cC$ be a $\kappa$-cosumpact object for some regular cardinal $\kappa$
    in a presentable $\infty$-category $\cC$ and pick a regular cardinal $\lambda$
    such that $\cC$ is $\lambda$-presentable.
    Since $\cC^\lambda$ is small
    and $\cC$ is locally small,
    we can find a regular cardinal $\mu'$ so that $\map(Y,X) \in \An^{\mu'}$
    for all $Y \in \cC^\lambda$.
    Picking a measurable cardinal $\mu \ge \max(\mu',\kappa,\lambda)$,
    we deduce from Lemma \ref{lem:measurable}
    that $X$ is $(-1)$-truncated, as desired.
\end{proof}

\begin{remark}\label{rem:slender}
    Theorem \ref{thm:meas-card-eqv} has a partial counterpart
    in the theory of abelian groups.
    Namely, a theorem of Łoś--Eda (cf.~\cite[Theorem 13.2.8]{Fuchs}, \cite[Theorem III.3.2]{Eklof-Mekler})
    implies that, if there exist no measurable cardinals,
    \textit{slender} abelian groups are cosumpact in $\Ab$.
    The class of slender abelian groups is non-trivial
    and includes e.g.~the integers $\Z$.
    In fact, an argument analogous to Lemma \ref{lem:converse}
    also proves that if $\Z \in \Ab$ is cosumpact,
    then there exist no measurable cardinals.

    Note also the stark contrast between the derived and non-derived
    settings; in $\cD(\Z)$, Remark \ref{rem:stable-cosumpact}
    shows that $\Z \in \cD(\Z)$ is not cosumpact,
    whereas the non-existence of measurable cardinals
    would imply that $\Z$ is cosumpact in $\Ab$.
\end{remark}

\begin{remark}\label{rem:chaus}
    There is another partial counterpart to Theorem \ref{thm:meas-card-eqv} in general topology.
    Namely, consider a pointed path-connected compact Hausdorff space $X$ as an object in $\CHaus_{\ast}$.
    If there exist no measurable cardinals, the argument of \cite[Theorem 3]{Keesling-Rudyak} shows that,
    for any set $Y_i,\,i \in I$ of pointed compact Hausdorff spaces,
    the wedge $\bigvee_{i\in I} Y_i$ is a union of path-components in its Stone--Čech compactification,
    which is their coproduct in $\CHaus_{\ast}$.
    Thus any pointed map $X \rightarrow \beta(\bigvee_{i \in I} Y_i)$ factors through
    $\bigvee_{i \in I} Y_i$ and then through a finite sub-wedge (which is compact and thus the coproduct in $\CHaus_{\ast}$)
    by an easy general topology argument.
    Thus, $X$ is sumpact in $\CHaus_{\ast}$.

    In particular, if there exist no measurable cardinals, $\CHaus_{\ast}^{\op}$ is a presentable category
    with a proper class of non-$(-1)$-truncated cosumpact objects.
\end{remark}

\begin{remark}
    The proof of Theorem \ref{thm:meas-card-eqv} ultimately works by considering filtered colimits of products, while our proof of Theorem \ref{thm:main} uses cofiltered limits of coproducts. In some sense, this makes these two proofs ``Eckmann--Hilton dual'' to one another.
    This is even more apparent in the semiadditive setting: as mentioned in Remark \ref{rem:semiadditive-tail}, the fiber of $\bigoplus_{\aleph_0} X\to \prod_{\aleph_0}X $ is exactly $\lim_n \bigoplus_{\aleph_0 \setminus n}X$, which is the $\aleph_0$-tail of Proposition \ref{prop:1-tail}, while its cofiber (the ``Eckmann--Hilton dual'') is $\colim_n \prod_{\aleph_0\setminus n} X$, which is analogous to ultraproducts $(\prod_I X)/\cU = \colim_{U \in \cU^\op}\prod_{i \in U}X$ that have appeared in this section\footnote{Note also that the folklore argument that a compactly generated abelian category satisfying (AB5) and (AB5*) is zero proceeds by showing that $\bigoplus_{\aleph_0} X\to \prod_{\aleph_0}X$ is an isomorphism.}.

    Theorem \ref{thm:meas-card-eqv} shows that the product-type part of the cocompactness hypothesis alone does not suffice for an unconditional proof of Theorem \ref{thm:main}: its sufficiency for all presentable $\infty$-categories is equivalent to the existence of a proper class of measurable cardinals.
\end{remark}

\bibliography{reference}
\end{document}